\documentclass[reqno,12pt]{amsart}

\usepackage[margin=2.4cm]{geometry}

\usepackage[utf8]{inputenc}
\usepackage[english]{babel}
\usepackage[all]{xy}
\usepackage{amsmath,latexsym,amssymb,verbatim}
\usepackage{mathrsfs}
\usepackage[export]{adjustbox} 
\usepackage{ifthen}
\usepackage{etoolbox}

\usepackage{cite}

\usepackage{hyperref}

\usepackage{amsfonts}

\usepackage{amsthm}
\usepackage{booktabs}
\usepackage{graphicx}
\usepackage[dvipsnames]{xcolor}

\usepackage{mathtools}
\usepackage{subcaption}
\usepackage{faktor}
\usepackage{marginnote}
\usepackage{tikz-cd}
\usepackage{stmaryrd}
\usepackage{centernot} 
\usepackage{enumitem}

\usepackage{blkarray}
\usepackage{multirow}
\usepackage{scalerel}
\makeindex

\newcommand{\plim}[1]{\,\underset{#1}{\underset{\longleftarrow}{\lim}}\,}
\newcommand{\face}{{\trianglelefteqslant}}

\definecolor{azuldibujos}{RGB}{179, 230, 230}
\definecolor{contrast2}{HTML}{E50057}
\definecolor{contrast3}{HTML}{FFDD07}
\definecolor{H0}{HTML}{c8dfe3}
\definecolor{H1}{HTML}{5e8b94}

\newcommand{\R}{\mathbb{R}}

\newcommand{\cF}{{\mathcal F}}
\newcommand{\cG}{{\mathcal G}}

\newcommand{\cM}{\mathcal{M}}
\newcommand{\cN}{\mathcal{N}}

\newcommand{\cP}{\mathcal{P}}

\newcommand{\bbN}{{\mathbb N}}

\DeclareMathOperator{\Sh}{\mathbf{Sh}}
\DeclareMathOperator{\Fun}{\mathbf{Fun}}

\DeclareMathOperator{\Id}{{Id}}

\DeclareMathOperator{\vect}{\mathbf{vect}_\Bbbk}

\newcommand{\pos}{\mathbf{pos}}

\DeclareMathOperator{\End}{End}
\DeclareMathOperator{\img}{im}

\DeclareMathOperator{\coker}{coker}
\DeclareMathOperator{\sign}{sign}

\DeclareMathOperator{\op}{op}

\DeclareMathOperator{\st}{st}

\newcommand{\h}{\mathbf{h}}

\newtheorem{theorem}{Theorem}[section]

\newtheorem{proposition}[theorem]{Proposition}

\newtheorem{corollary}[theorem]{Corollary}

\newtheorem{lemma}[theorem]{Lemma}

\theoremstyle{definition}
\newtheorem{definition}[theorem]{Definition}

\newtheorem{example}[theorem]{Example}

\newtheorem{remark}[theorem]{Remark}

\AtBeginEnvironment{example}{\pushQED{\qed}}
\AtEndEnvironment{example}{\popQED}

\AtBeginEnvironment{ex}{\pushQED{\qed}}
\AtEndEnvironment{ex}{\popQED}

\begin{document}

\title[Intervals for Multipersistence and Sheaf Data]{Interval Decompositions for Multipersistence Modules over Finite Posets and Robustness of Sheaf Data on Simplicial Complexes}

\date{\today}

\subjclass[2020]{Primary 55N31; Secondary 16G20, 55N30, 55U10, 05E45}

\keywords{Multiparameter persistence, Interval decompositions, Finite posets, Cellular sheaves, Sheaf cohomology, Simplicial complexes, Higher-order networks, Robustness of sheaf data}

\author[P. Hern\'andez-Garc\'ia]{Pablo Hern\'andez-Garc\'ia}
\author[D. Hern\'andez Serrano]{Daniel Hern\'andez Serrano}
\author[D. S\'anchez G\'omez]{Dar\'io S\'anchez G\'omez}

\address{Departamento de Matem\'aticas and Instituto Universitario de F\'isica Fundamental y Matem\'aticas (IUFFyM), Universidad de Salamanca, Salamanca, Spain}
\email{pablohg.eka@usal.es, dani@usal.es, dario@usal.es}

\begin{abstract}
We prove structure theorems for multipersistence modules indexed by finite posets that are not totally ordered. Specifically, we consider pointwise finite-dimensional modules over the opposite of the poset of non-empty subsets of a finite set, and give sufficient conditions, expressed through transition morphisms, for such modules to split as direct sums of interval modules. In the general case, the interval summands and multiplicities are explicitly determined by dimensions at finitely many indices. Although the assumptions may look algebraically restrictive, we show that they have a natural geometric origin in a robustness theory of cellular sheaf data over simplicial complexes, where one studies how algebraic information, compatibility constraints, and cohomological obstructions persist under structural failures. We extend thickness and cohesion from simplicial cohomology to cellular sheaves: thickness detects the dependence of cohomology classes on high-dimensional support, while cohesion captures the influence of higher-order adjacencies on the cohomological features. We leverage our abstract structure theorems to obtain interval decompositions for the resulting geometric cohesion modules. Finally, we introduce biparameter persistence constructions for sheaf resilience, tracking whether global sections and cohomological obstructions remain detectable on thick or cohesive substructures during topological degradation.
\end{abstract}

\maketitle
%{\small \tableofcontents}

\section{Introduction}

One of the central features of one-parameter persistent homology is the existence of barcodes~\cite{Carlsson_Zomorodian05}. Algebraically, pointwise finite-dimensional persistence modules indexed by a totally ordered set decompose uniquely into interval modules~\cite{CrawleyBoevey15}; geometrically, this makes it possible to summarize the evolution of homological features along a filtration by a multiset of intervals. This situation changes drastically in multiparameter persistence. Modules indexed by non-totally ordered posets do not admit a comparable barcode classification in general. Already for multidimensional persistence, no complete discrete invariant analogous to the one-parameter barcode can be expected~\cite{Carlsson_Zomorodian09}. Although Krull--Remak--Schmidt type decompositions into indecomposables remain available in broad categorical settings~\cite{Bakke_Crawley20}, the indecomposable summands are typically too complicated to provide a usable barcode theory.

This obstruction has motivated several complementary approaches to multipersistence. Some works identify special classes of modules, such as rectangle-decomposable or block-decomposable modules, for which interval-type decompositions and computable invariants can be recovered~\cite{Bjerkevik21}, \cite{Botnan_Lebovici_Oudot22}, \cite{Bakke_Crawley20}, \cite{Lebovici_Lerch_Oudot24}. Other approaches replace exact barcodes by signed, rank-theoretic, or approximate decompositions~\cite{Bjerkevik25}, \cite{Botnan_Oppermann_Oudot25}, \cite{Botnan_Oppermann_Oudot_Scoccola24}. These developments show that, although multiparameter persistence is wild in general, structured families of modules may still admit meaningful interval descriptions.

The first aim of this paper is to prove structure theorems for one such family. More precisely, we study persistence modules over $\cP_N^{*,\op}$, the opposite of the poset of non-empty subsets of $[N]=\{0,\dots,N\}$. Since this indexing poset is not totally ordered, these modules lie genuinely outside the one-parameter setting. Theorems~\ref{Thm:Decomposition2} and~\ref{Thm:DecompositionN} give sufficient conditions under which they nevertheless decompose as direct sums of interval modules. In the general case, the decomposition is not only existential: Corollary~\ref{Cor:DecompositionN_Explicit} gives the interval summands and their multiplicities explicitly in terms of the dimensions of the module at a finite set of indices. Thus the structure theorem provides an effective way of computing the barcode-type data associated with this family of modules. 

These results are not intended as a classification theorem for arbitrary multipersistence modules. Rather, they identify a structured family inside a wild representation-theoretic setting. The hypotheses are expressed in terms of the transition morphisms of the diagram: some of them are required to be injective, and others to be isomorphisms. While such assumptions may look restrictive if read purely algebraically, we show that they are naturally realized by persistence modules arising from cellular sheaf cohomology. Cellular sheaves provide a flexible language for local algebraic data over combinatorial spaces: a sheaf on a simplicial complex assigns a vector space of local states to each simplex and linear maps describing how these states are transmitted, compared, or restricted along face relations. This viewpoint has become important in topological data analysis, network science, signal processing or network dynamics~\cite{Curry14}, \cite{Hansen20,Ghrist_Hansen19,Ghrist_Hansen21}, \cite{Shepard85}. 

The second aim of the paper is to show that the multipersistence modules studied here have a natural geometric origin in a theory of robustness for sheaf data over simplicial complexes. In this setting, robustness is not only a property of the underlying simplicial support; it also concerns the local algebraic data carried by the sheaf, the compatibility constraints imposed by its restriction morphisms, and the cohomology classes that measure global consistency or obstruction. In complex networks, robustness is usually understood as the ability of a system to preserve relevant structural or functional properties under failures, perturbations, or targeted attacks~\cite{Artime_Otros2024}, \cite{Barabasi_Posfai16}. Graph-based models typically represent such degradation through the deletion of vertices or edges. In higher-order network models, perturbations may also remove higher-dimensional simplices, thereby affecting not only connectivity and homology, but also the collective interactions supporting global structure~\cite{Bianconi_Ziff18}, \cite{Bobrowski_Skraba20}, \cite{Bobrowski_Skraba22}, \cite{Chen_Otros23}, \cite{Zhao_otros22_2}, \cite{Zhao_Otros22}. 

Classical Betti numbers detect connected components and holes, but they do not record how cohomology classes are supported by the combinatorics of the complex. In~\cite{Pablo_Dani_Dario_25}, thick and cohesion Betti numbers were introduced to refine this information, providing a measure of robustness. Thickness quantifies the dependence of cohomology on sufficiently high-dimensional support, whereas cohesion captures the strength of the higher-order adjacencies supporting these cohomology classes. Here we extend these constructions from constant coefficients to cellular sheaves. Sheaf thickness is obtained by restricting the sheaf to the coskeleta of the simplicial complex, giving a one-parameter persistence module that measures how sheaf cohomology changes when low-dimensional support is removed. Sheaf cohesion is obtained by restricting the associated sheaf on the face poset to subposets determined by selected dimensions. Since these dimension-selected subposets are not generally face posets of simplicial subcomplexes, we work with finite topological spaces and sheaves on posets~\cite{Barmak11}, \cite{Carmona_Fernando_Maestro_Juanfran20}, \cite{Curry14}, \cite{Fernando_17}, \cite{Shepard85}.  In degree zero, this construction leads to higher-order spaces of sections. These generalize ordinary global sections by measuring compatibility between simplices of prescribed dimensions without requiring compatibility through all intermediate strata. For instance, in a communication model, one may study agreement between vertex data and triangular interactions without forcing pairwise agreement along the edges. Thus, sheaf cohesion captures forms of higher-order compatibility that are invisible to ordinary global sections.

As the selected set of dimensions varies, the resulting cohesion sheaf cohomology groups form multipersistence modules over $\cP_N^{*,\op}$, precisely the finite posets appearing in the abstract structure theorems. Consequently, the abstract structure theorems have concrete consequences for sheaf cohesion. 
Theorems~\ref{Thm:DescomposicionCohesiveDim2} and~\ref{Thm:DescomposicionPN} show that cohesion persistence modules of cellular sheaves are interval-decomposable in several relevant geometric situations. In dimension $2$, this holds degreewise: $H^{0,\bullet}(X;\cF)$ decomposes under a local injectivity assumption, while $H^{1,\bullet}(X;\cF)$ and $H^{2,\bullet}(X;\cF)$ decompose for every cellular sheaf. In higher dimensions, the zero-th cohesion module decomposes for componentwise pure simplicial complexes with injective restriction morphisms. The purity assumption is natural in higher-order network models generated by facets of a fixed dimension or by interactions of a prescribed order~\cite{Bianconi_Rahmede16}, \cite{Courtney_Bianconi16}, \cite{Courtney_Bianconi17}.

Finally, we integrate these constructions with degradation processes on the simplicial support. This leads to biparameter persistence modules in which one parameter records the attack or failure process, while the other records either a thickness threshold or a selected set of dimensions. The associated ladder diagrams separate the persistence of the original sheaf cohomology from the persistence of the classes that remain visible in thick or cohesive parts of the complex. Their kernels, images, and cokernels distinguish classes that survive the restriction, classes that are lost, and classes that are created by passing to the selected support. In this way, the resilience of the simplicial support and that of the algebraic information encoded by the sheaf can be studied jointly.

\section{Preliminaries}\label{Sec:Prelim}

Throughout, $\Bbbk$ denotes a field, all simplicial complexes are finite, and all vector spaces are finite-dimensional over $\Bbbk$. 

\subsection{Cellular sheaves on simplicial complexes}
\label{Sec:CellularSheaves}We begin by recalling the standard framework of cellular sheaves on simplicial complexes. For a comprehensive background, we refer the reader to~\cite{Curry14} and~\cite{Hansen20}.

Let $X$ be a simplicial complex. We denote by $S^n(X)$ its set of $n$-simplices and by $(P_X,\face)$ its face poset ordered by inclusion. A cellular sheaf on $X$ is a functor $\mathcal{F}\colon P_X\to \vect$. Explicitly, $\mathcal{F}$ assigns a vector space $\mathcal{F}(\sigma)$ to every simplex $\sigma\in X$, and a linear map $\mathcal{F}_{\sigma\face\tau}\colon \mathcal{F}(\sigma)\to \mathcal{F}(\tau)$ to every inclusion $\sigma\face\tau$, such that $\mathcal{F}_{\sigma\face\sigma}$ is the identity map and $\mathcal{F}_{\tau\face\mu} \circ \mathcal{F}_{\sigma\face\tau} = \mathcal{F}_{\sigma\face\mu}$ whenever $\sigma\face\tau\face\mu$.

The space of global sections of $\mathcal{F}$, denoted by $\Gamma(X;\mathcal{F})$, is the vector space of local data assigned to vertices that are mutually compatible along all simplices of $X$: 
\[
\Gamma(X;\mathcal{F}) = \big\{ (x_v)_{v \in S^0(X)} \in \prod_{v \in S^0(X)} \mathcal{F}(v) \; : \; \mathcal{F}_{u\face \sigma}(x_u) = \mathcal{F}_{v\face \sigma}(x_v) \quad \forall \, u,v\face \sigma \in X \big\}\,.
\]

The cellular cochain spaces of $\mathcal{F}$ are defined as:
\[
C^n(X;\mathcal{F}) \coloneqq \prod_{\sigma\in S^n(X)}\mathcal{F}(\sigma)
\]
To define the coboundary maps $\delta^n_\mathcal{F}\colon C^n(X;\mathcal{F})\to C^{n+1}(X;\mathcal{F})$, we fix a total ordering on the vertices of $X$. For an $n$-cochain $x$, its coboundary is given by $(\delta^n_\mathcal{F} x)_\tau \coloneqq \sum_{j=0}^{n+1} (-1)^j \mathcal{F}_{\tau_j\face\tau}(x_{\tau_j})$,
where $\tau_j$ is the face of $\tau$ obtained by deleting its $j$-th vertex according to the fixed total order. The $n$-th cohomology space of $\mathcal{F}$ is $H^n(X;\mathcal{F}) \coloneqq \ker\delta^n_\mathcal{F}/\text{im }\delta^{n-1}_\mathcal{F}$. In degree zero, the cohomology space naturally identifies with the space of global sections of the cellular sheaf: $H^0(X;\mathcal{F})\simeq \Gamma(X;\mathcal{F})$. Furthermore, note that the cohomology of the constant cellular sheaf $\Bbbk$ coincides with the simplicial cohomology of $X$ with coefficients in $\Bbbk$.

If $f\colon X\to Y$ is a simplicial map and $\cF$ is a cellular sheaf on $Y$, its inverse image is the cellular sheaf $f^{-1}\cF$ on $X$ defined by
\[
f^{-1}\cF(\sigma)\coloneqq \cF(f(\sigma))\,,\qquad 
     f^{-1}\cF_{\sigma\face \tau}\coloneqq \cF_{f(\sigma)\face f(\tau)}\,.
\]
In particular, if $i\colon X'\hookrightarrow X$ is the inclusion of a subcomplex, we write $\cF_{|_{X'}}\coloneqq i^{-1}\cF$.

\begin{proposition}\label{Prop:FunctCellSheafCohomology}
For every simplicial map $f\colon X\to Y$, every cellular sheaf $\cF$ on $Y$, and every $n\geq 0$, there is a natural linear map
\[
    H^n(f)\colon H^n(Y;\cF)\longrightarrow H^n(X;f^{-1}\cF),
\]
which is functorial with respect to the composition of simplicial maps.
\end{proposition}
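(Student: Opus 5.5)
The plan is to realize $H^n(X;\cF)$ as the right derived functors of the global sections functor on the functor category $\Fun(P_X,\vect)$, and then obtain $H^n(f)$ from the universal property of derived functors. Concretely, $\Gamma(X;\cF)\simeq \lim_{P_X}\cF$. Since a limit over the poset $P_X$ is determined by the vertex components together with the compatibility conditions along all simplices, this matches the formula for $\Gamma(X;\cF)$ recalled above. I would then invoke the standard fact (see~\cite{Curry14}) that the cellular cochain complex $C^\bullet(X;\cF)$ with the fixed-order coboundary computes $R^n\lim_{P_X}\cF$. One way to see this is to check that both sides are universal $\delta$-functors that agree in degree $0$:
\begin{enumerate}[label=(\alph*)]
\item $C^\bullet(X;-)$ is exact in $\cF$, since it is a product of evaluations, so it gives long exact sequences in cohomology;
\item its higher cohomology vanishes on enough injectives, for instance on the sheaves $\tau\mapsto V$ if $\tau\face\sigma$ and $0$ otherwise, which are injective in $\Fun(P_X,\vect)$.
\end{enumerate}
For one such sheaf, $C^\bullet(X;-)$ is the simplicial cochain complex of the closed simplex $\sigma$ with coefficients in $V$, which is acyclic in positive degrees.

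Next, a simplicial map $f$ induces a monotone map $P_X\to P_Y$, $\sigma\mapsto f(\sigma)$, and $f^{-1}$ is precomposition with it. Hence $f^{-1}\colon \Fun(P_Y,\vect)\to\Fun(P_X,\vect)$ is exact. There is a natural map
\[
\lim_{P_Y}\cF\to \lim_{P_X} f^{-1}\cF,\qquad (x_\tau)_\tau\mapsto (x_{f(\sigma)})_\sigma,
\]
which on vertices is $(x_w)_w\mapsto (x_{f(v)})_v$, that is, the pullback of global sections. The composite $\Gamma(X;-)\circ f^{-1}$ is a $\delta$-functor because $f^{-1}$ is exact, and $R^\bullet\Gamma(Y;-)$ is universal. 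So the natural transformation in degree $0$ extends uniquely to natural maps $H^n(Y;\cF)\to H^n(X;f^{-1}\cF)$ for all $n$. Equivalently, choose an injective resolution $\cF\to I^\bullet$, apply $f^{-1}$ to get a (not necessarily injective) resolution of $f^{-1}\cF$, map it to an injective resolution, and take sections.

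Functoriality follows from the uniqueness clause. We have $(g\circ f)^{-1}=f^{-1}\circ g^{-1}$ strictly, and $H^n(f)\circ H^n(g)$ and $H^n(g\circ f)$ are both morphisms of $\delta$-functors extending the same degree-$0$ map. Hence they coincide, and $H^n(\mathrm{id})=\mathrm{id}$ likewise.

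The main obstacle is the identification of the fixed-order cellular complex with $R^\bullet\lim$, in particular the injectivity of the sheaves in (b). This identification is what makes the construction independent of the vertex order. A direct cochain-level definition fails, because $f$ may collapse simplices, so $f(\sigma)$ may have smaller dimension than $\sigma$. If I wanted an explicit cochain map instead, I would pass to the ordered cochain complex, indexed by ordered tuples $(v_0,\dots,v_n)$ spanning a simplex and allowing repetitions. On that complex set
\[
(f^\# c)(v_0,\dots,v_n)=c(f(v_0),\dots,f(v_n))\in \cF(f(\sigma)),
\]
which is strictly compatible with coboundaries and composition. Restriction to increasing tuples is then a comparison quasi-isomorphism to $C^\bullet(X;\cF)$, proved by the same $\delta$-functor argument.
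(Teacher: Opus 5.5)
Your proof is correct, but it takes a different route from the paper. The paper stays at the cochain level. It defines $(f^ny)_\sigma=\sign(\epsilon_\sigma)\,y_{f(\sigma)}$ if $\dim f(\sigma)=n$ and $(f^ny)_\sigma=0$ otherwise, states that these maps commute with the coboundaries, and reads functoriality directly off the formula: the signs of the reordering permutations multiply, and $(g\circ f)(\sigma)$ has full dimension iff no collapse occurs at either stage.

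Your route proceeds as follows:
\begin{enumerate}
\item $C^\bullet(X;-)$ is exact in the sheaf.
\item It is effaceable on the sheaves $I_{\sigma,V}$ ($V$ on the faces of $\sigma$, $0$ elsewhere). These are injective because $\mathrm{Hom}(\cG,I_{\sigma,V})\cong\mathrm{Hom}_\Bbbk(\cG(\sigma),V)$. To justify ``enough'' you should also note that every $\cG$ embeds into $\prod_\sigma I_{\sigma,\cG(\sigma)}$.
\item Hence $H^n(X;\cF)\cong R^n\lim_{P_X}\cF$.
\item The degree-$0$ pullback of sections then extends uniquely by universality.
\end{enumerate}
This buys naturality in $\cF$, independence of the vertex orderings, and functoriality for free from uniqueness. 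It also matches the sheaf-on-poset functoriality the paper later uses for the cohesion modules. The price is reliance on the comparison with $R^\bullet\lim_{P_X}$ (essentially the Shepard isomorphism the paper quotes) and the absence of an explicit formula. The paper's route is elementary and computable, at the cost of sign bookkeeping that it leaves to the reader.

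The two constructions agree. The paper's $f^n$ are cochain maps natural in $\cF$ between exact functors of $\cF$, so they form a morphism of $\delta$-functors. Its degree-$0$ component is $y\mapsto (y_{f(v)})_v$, so your uniqueness clause identifies it with your $H^n(f)$.

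Your side remark that ``a direct cochain-level definition fails because $f$ may collapse simplices'' is mistaken. Only the naive formula $y_{f(\sigma)}$ is ill-typed; the signed version with zeros on collapsed simplices is a cochain map. Take $\tau=(v_0<\dots<v_{n+1})$ with $\dim f(\tau)=n$, so exactly one pair $f(v_a)=f(v_b)$ with $a<b$ collapses.
\begin{itemize}
\item Only the faces $\tau_a$ and $\tau_b$ map onto $f(\tau)$, and the restriction involved is $\cF_{f(\tau)\face f(\tau)}=\Id$.
\item The two image sequences differ by a cycle of length $b-a$, so $\sign(\epsilon_{\tau_b})=(-1)^{b-a-1}\sign(\epsilon_{\tau_a})$.
\item Hence $(-1)^a\sign(\epsilon_{\tau_a})\,y_{f(\tau)}+(-1)^b\sign(\epsilon_{\tau_b})\,y_{f(\tau)}=0$.
\end{itemize}
The remaining cases ($f$ injective on $\tau$, or at least two collapses) are as in ordinary simplicial cohomology. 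So your ordered-cochain detour is valid but unnecessary. The remark is not load-bearing for your main argument.
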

\begin{proof}
For each $n \ge 0$, we define the linear map $f^n \colon C^n(Y;\cF) \to C^n(X;f^{-1}\cF)$ by setting, for each ordered simplex $\sigma=(v_0,\dots,v_n)$, 
\[
    (f^ny)_\sigma\coloneqq \begin{cases}
        \sign(\epsilon_\sigma)\,y_{f(\sigma)} & \text{ if } \dim f(\sigma)=n\,,\\
        0 & \text{ otherwise,}
    \end{cases}
    \]
    where $\epsilon_\sigma$ is the permutation such that $f(v_{\epsilon_\sigma(0)}) < \dots < f(v_{\epsilon_\sigma(n)})$ according to the fixed ordering of $Y$. These maps commute with the coboundary operators ($\delta^n_{f^{-1}\cF} \circ f^n = f^{n+1} \circ \delta^n_\cF$). Therefore, they induce a linear map $H^n(f)$ on the cohomology spaces. The functorial properties $H^n(\Id) = \Id$ and $H^n(f \circ g) = H^n(g) \circ H^n(f)$ follow immediately from the definition.
\end{proof}

\subsection{Sheaves on finite posets}
\label{Sec:HacesPosets}
Every finite poset $P$ can be endowed with the Alexandrov topology, whose open sets are the upwards-closed subsets of $P$. For any $p\in P$, the smallest open set containing $p$ is $U_p\coloneqq \{q\in P \mid p\leq q\}$. This construction establishes an equivalence between finite posets and finite $T_0$ topological spaces~\cite{Barmak11}. 

A functor $\cF\colon P\to\vect$ determines a sheaf $\widehat{\cF}$ on $P$ defined by
\[
    \widehat{\cF}(U) \coloneqq \varprojlim_{p\in U}\cF(p)
\]
for every open set $U\subseteq P$. Conversely, a sheaf $\widehat{\cG}$ on $P$ determines a functor by assigning the vector space $\widehat{\cG}(U_p)$ to each $p \in P$, and the restriction map $\widehat{\cG}(U_p) \to \widehat{\cG}(U_q)$ induced by the inclusion $U_q \subseteq U_p$ to each relation $p \leq q$. These constructions define an equivalence of categories~\cite[Theorem~4.2.10]{Curry14}: 
\[
    \Fun(P,\vect)\simeq \Sh(P;\vect).
\]
In particular, when $P$ is the face poset of a simplicial complex $X$, this equivalence identifies cellular sheaves on $X$ with topological sheaves on the finite Alexandrov space $P_X$.

The explicit computation of the sheaf cohomology of $\widehat{\cF}$ can be carried out using its standard resolution (also known as the Roos resolution)~\cite{Fernando_17}. This yields a cochain complex with spaces $C^n(P;\widehat{\mathcal{F}}) \coloneqq \prod_{p_0<\dots<p_n}\mathcal{F}(p_n)$ and coboundary maps 
\begin{equation}\label{Eq:CoboundaryStandardResolution}
    (\delta^n x)_{p_0<\dots<p_{n+1}} \coloneqq \sum_{j=0}^{n} (-1)^j x_{p_0<\dots<\widehat{p_j}<\dots<p_{n+1}} + (-1)^{n+1} \mathcal{F}_{p_n\leq p_{n+1}} (x_{p_0<\dots<p_n})
\end{equation}
so that $H^n(P;\widehat{\cF})\simeq \ker \delta^n/\img\delta^{n-1}$~\cite[Theorem 2.15]{Fernando_17}.

Furthermore, sheaf cohomology is functorial with respect to order-preserving maps~\cite[Section 4.16]{Godement58}: any such map $f\colon P\to Q$ and sheaf $\widehat{\mathcal{G}}$ on $Q$ induce a linear map
\[
H^n(f)\colon H^n(Q;\widehat{\mathcal{G}}) \longrightarrow H^n(P;f^{-1}\widehat{\mathcal{G}})\,,\]
satisfying $H^n(\text{Id}) = \text{Id}$ and $H^n(f\circ g) = H^n(g)\circ H^n(f)$.

\begin{theorem}[{\!\!\cite[Theorem~1.4.2]{Shepard85}}] \label{Thm:EquivalenciaCohomologiacelularyhaces}
    Let $\cF$ be a cellular sheaf on a simplicial complex $X$. Then, for every $n\geq 0$, there is a natural isomorphism
    \[
        H^n(X;\cF) \simeq H^n(P_X;\widehat{\cF}).
    \]
\end{theorem}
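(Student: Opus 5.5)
The plan is to compare both sides through an explicit quasi-isomorphism of cochain complexes. On the right we have the standard (Roos) complex $C^\bullet(P_X;\widehat{\cF})$, indexed by chains $\sigma_0<\dots<\sigma_n$ in $P_X$ with coefficients $\cF(\sigma_n)$. On the left we have the cellular complex $C^\bullet(X;\cF)$ built with the fixed vertex ordering. A convenient bridge is the barycentric subdivision $\operatorname{sd}X$, whose simplices are exactly the chains in $P_X$.

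First, I would define a cochain map $\Phi\colon C^n(X;\cF)\to C^n(P_X;\widehat{\cF})$. For $x\in C^n(X;\cF)$ and a chain $\sigma_0<\dots<\sigma_n$, set $(\Phi x)_{\sigma_0<\dots<\sigma_n}$ to be nonzero only when the chain is a full flag, that is, $\dim\sigma_i=i$. In that case the value is $\pm\,x_{\sigma_n}$, where the sign is the sign of the permutation ordering the vertices $v_0,\dots,v_n$ with $\sigma_i=\sigma_{i-1}\cup\{v_i\}$ relative to the fixed total order. This is the sheaf-theoretic analogue of the classical subdivision map. I would then check the identity $\delta\Phi=\Phi\delta$ directly from \eqref{Eq:CoboundaryStandardResolution}. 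On a chain of length $n+1$, the interior face-deletion terms cancel in pairs unless the chain is a full flag with a single gap, and the last term supplies $\cF_{\sigma_n\face\sigma_{n+1}}$. This matches the cellular coboundary $\sum_j(-1)^j\cF_{\tau_j\face\tau}$ after tracking signs.

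Second, I would show that $\Phi$ is a quasi-isomorphism. The cleanest route is to show that both complexes compute the derived global sections of $\widehat{\cF}$. This needs two facts. First, the Roos complex computes $H^n(P_X;\widehat{\cF})$; this is already assumed via \cite[Theorem 2.15]{Fernando_17}. Second, the cellular complex arises as $\Gamma$ applied to a resolution of $\widehat{\cF}$ by sheaves that are acyclic on the open sets $U_\sigma$. Concretely, for each $n$ the sheaf $\mathcal{C}^n\coloneqq\prod_{\dim\tau=n}(i_\tau)_*\cF(\tau)$ is used, where $(i_\tau)_*V$ denotes the skyscraper-type sheaf supported on the closed downset of $\tau$. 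Its global sections are $C^n(X;\cF)$. Flasqueness (or $\Gamma$-acyclicity) of each $\mathcal{C}^n$ is easy because these are pushforwards of constant data from minimal opens. Exactness of $0\to\widehat{\cF}\to\mathcal{C}^0\to\mathcal{C}^1\to\cdots$ can be checked stalkwise, that is, on each $U_\sigma$. There the complex becomes $\cF(\sigma)$ tensored with the augmented simplicial cochain complex of the link-type structure of simplices containing $\sigma$. Equivalently, it is the cellular cochain complex of a cone (the star of $\sigma$ relative to $\sigma$), which is acyclic.

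The main obstacle is this stalkwise exactness, since the relevant complex on $U_\sigma$ must be identified with $\cF(\sigma)\otimes\widetilde{C}^\bullet$ of a contractible complex. To do this I would first trace carefully how the restriction maps enter, and only then reduce to the constant-coefficient case, where the simplicial cohomology of a simplex or cone is classical. Once both resolutions are in place, standard homological algebra gives a canonical isomorphism, and one checks that it is induced by $\Phi$. Naturality follows because $\Phi$ is defined by a universal formula compatible with the maps of Proposition~\ref{Prop:FunctCellSheafCohomology} and with poset functoriality.
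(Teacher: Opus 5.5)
The paper gives no proof of this statement and simply cites Shepard. Your argument does not go through: both of its steps fail as written, as shown in the two points below.

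\textbf{The map $\Phi$ is not a cochain map.} Consider the Roos coboundary at $c=(\sigma_0<\dots<\sigma_{n+1})$. Every term $(\Phi x)_{c\smallsetminus\sigma_j}$ with $j\le n$ is evaluated on a chain whose top element is still $\sigma_{n+1}$, of dimension at least $n+1$, so that chain is never a full flag. These terms therefore vanish identically; they do not cancel in pairs. Hence $(\delta\Phi x)_c=\pm\cF_{\sigma_n\face\sigma_{n+1}}(x_{\sigma_n})$ whenever $\sigma_0<\dots<\sigma_n$ is a full flag, for every $\sigma_{n+1}\supsetneq\sigma_n$, and this is not $\Phi(\delta x)$. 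Concretely, take a single edge $e=\{u,v\}$ with constant coefficients. The cocycle $x=(1,1)$ gives $(\Phi x)_u=(\Phi x)_v=1$ and $(\Phi x)_e=0$, so $(\delta\Phi x)_{u<e}=-1\neq 0$. The pairwise cancellation you describe is the verification for the transposed map $C^n(P_X;\widehat{\cF})\to C^n(X;\cF)$, $y\mapsto\bigl(\sum\pm y_{\sigma_0<\dots<\sigma_n}\bigr)_{\sigma}$, summed over full flags ending at $\sigma$. In your direction, a correct formula is the last-vertex one: $(\Phi x)_{\sigma_0<\dots<\sigma_n}=\cF_{\tau\face\sigma_n}(x_\tau)$ with $\tau=\{\max\sigma_0,\dots,\max\sigma_n\}$ when these vertices are distinct, and $0$ otherwise.

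\textbf{$\mathcal C^\bullet$ is not a resolution of $\widehat{\cF}$.} With the cellular differential it is not even a complex of sheaves, for three reasons.
\begin{enumerate}
\item The stalk of $(i_\tau)_*V$ at $\sigma$ is $V$ if $\sigma\face\tau$ and $0$ otherwise. So $(\mathcal C^0)_\sigma=0$ whenever $\dim\sigma>0$, and $\widehat{\cF}\to\mathcal C^0$ kills $\cF(\sigma)$ there.
\item Any morphism $(i_\sigma)_*V\to(i_\tau)_*W$ with $\sigma\subsetneq\tau$ is zero. Naturality along $p\face\tau$ forces each component to factor through the stalk of $(i_\sigma)_*V$ at $\tau$, which is $0$. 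So the coboundary from $n$-simplices to their $(n+1)$-cofaces cannot be a morphism $\mathcal C^n\to\mathcal C^{n+1}$.
\item Your acyclicity claim is on the wrong side. The complex of cofaces of $\sigma$ is the augmented cochain complex of $\operatorname{lk}\sigma$, which is not acyclic: an interior vertex of a triangulated surface has a circle as its link. What is acyclic is the complex of \emph{faces} of a simplex.
\end{enumerate}

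\textbf{A working route.} The acyclicity of the faces of a simplex is exactly what a correct proof needs. Under $\Fun(P_X,\vect)\simeq\Sh(P_X;\vect)$, the functor $\Gamma$ becomes $\varprojlim=\operatorname{Hom}(\underline{\Bbbk},-)$, so $H^n(P_X;\widehat{\cF})\simeq\operatorname{Ext}^n(\underline{\Bbbk},\cF)$. The representables $\Bbbk[U_\sigma]$ (the extensions by zero $j_!\Bbbk_{U_\sigma}$ from the minimal opens) are projective, with $\operatorname{Hom}(\Bbbk[U_\sigma],\cF)=\cF(\sigma)$. Set $L_n=\bigoplus_{\dim\sigma=n}\Bbbk[U_\sigma]$ with the simplicial boundary as differential. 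Evaluated at $\mu$, this is the augmented chain complex of the closed simplex $\mu$, hence exact. Then $\operatorname{Hom}(L_\bullet,\cF)$ is exactly $C^\bullet(X;\cF)$ with its cellular coboundary, and naturality in $\cF$ is automatic. The Roos complex is $\operatorname{Hom}(B_\bullet,\cF)$ for the bar resolution $B_n=\bigoplus_{p_0<\dots<p_n}\Bbbk[U_{p_n}]$. Comparison maps between $B_\bullet$ and $L_\bullet$ therefore produce the two explicit cochain maps described above.
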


\subsection{Thickness and cohesion in simplicial complexes}
\label{s:pre1}

We now recall the topological invariants introduced in~\cite{Pablo_Dani_Dario_25}. They refine the classical Betti numbers by incorporating information about the dimensions and higher-order adjacencies of the simplices supporting cohomology classes.

\paragraph{{\bf Thickness.}}
Let $X$ be a simplicial complex and let $q\in\bbN$. The $q$-coskeleton of $X$ is the simplicial subcomplex
\[
    X^q \coloneqq \{\sigma\in X : \exists\,\tau\in X \text{ such that } \sigma\face\tau \text{ and }\dim\tau\geq q\}.
\]
Thus, $X^1$ is obtained by removing isolated vertices, $X^2$ by removing vertices and edges not contained in any triangle, and so on. The $(n,q)$-th thick Betti number is defined as the dimension of the $n$-th cohomology space of the $q$-coskeleton:
\[
    \beta^{n,q}(X;\Bbbk) \coloneqq \dim H^n(X^q;\Bbbk).
\]
Since $X^0=X$, one recovers the classical Betti numbers when $q=0$, that is, $\beta^{n,0}(X;\Bbbk)=\beta^n(X;\Bbbk)$. 

If $N=\dim X$, the family of coskeleta forms a cofiltration of simplicial complexes
\[
    X=X^0\supseteq X^1\supseteq\cdots\supseteq X^N\supseteq X^{N+1}=\emptyset.
\]
This cofiltration induces a persistence module in cohomology,
\[
    H^n(X^\bullet;\Bbbk)\colon H^n(X^0;\Bbbk) \longrightarrow H^n(X^1;\Bbbk) \longrightarrow \cdots \longrightarrow H^n(X^N;\Bbbk),
\]
which records the evolution of cohomology classes as low-dimensional support is progressively removed. Intuitively, a class that persists to a larger value of $q$ is supported by higher-dimensional simplices and is, therefore, thicker.

\paragraph{{\bf Cohesion.}}
Although thickness captures the dimensions of the supporting simplices, it does not fully reflect the strength with which these simplices are attached to one another. To analyze this second feature, one must consider the selective deletion of simplices of prescribed dimensions. Since the resulting structure is generally no longer a simplicial complex, it is more natural to work with its face poset. Let $X$ be a simplicial complex of dimension $N$ and let
\[
    \h=\{h_0,\dots,h_m\}\subseteq [N]=\{0,1,\dots,N\}\,,
    \qquad
    0\leq h_0<\cdots<h_m\leq N.
\]
The $\h$-face poset of $X$ is the subposet $P_X^\h \coloneqq \{\sigma\in P_X : \dim\sigma\in\h\}$. Its order complex $\mathcal{K}(P_X^\mathbf{h})$ is a simplicial subcomplex of $\mathcal{K}(X)$, the barycentric subdivision of $X$. It is obtained by removing from $\mathcal{K}(X)$ the open stars of the vertices associated with simplices whose dimensions are not in $\mathbf{h}$:
\[
\mathcal{K}(P_X^\mathbf{h}) = \mathcal{K}(X) \smallsetminus \bigcup_{\substack{\dim\sigma\notin\mathbf{h}\\ \sigma \in X}} \st_{\mathcal{K}(X)}(\sigma)\,,
\]
where $\st_{\mathcal{K}(X)}(\sigma) \coloneqq \{\tau \in \mathcal{K}(X) : \sigma \in \tau\}$. The $(n,\h)$-th cohesive Betti number is
\begin{equation}\label{Def:CohesiveBetti}
    \beta^{n,\h}(X;\Bbbk)
    \coloneqq
    \dim H^n(\mathcal{K}(P_X^\h);\Bbbk)\,.
\end{equation}
For $\h=[N]$, one has $P_X^{[N]}=P_X$ and $\mathcal{K}(P_X)=\mathcal{K}(X)$, recovering the classical Betti numbers: $\beta^{n,[N]}(X;\Bbbk)= \beta^n(X;\Bbbk)$.

The collection of $\h$-face posets defines a filtration indexed by $\cP^*_N$, the poset of non-empty subsets of $[N]$ ordered by inclusion. Taking cohomology yields a multiparameter persistence module indexed by the opposite poset of $\cP^*_N$:
\begin{equation}\label{Eq:CohesionModuleConstantSheaf}
        H^n(P_X^\bullet;\Bbbk)\colon
    \cP^{*,\op}_N
    \longrightarrow
    \vect.
\end{equation}
For each $\h\in\cP^*_N$, the inclusion $P_X^\h\subseteq P_X$ induces a linear map
\[
    \varphi^{n,\h}\colon
    H^n(X;\Bbbk)
    \simeq
    H^n(P_X;\Bbbk)
    \longrightarrow
    H^n(P_X^\h;\Bbbk).
\]
Its image measures the cohomology classes of $X$ that remain detectable after restricting to the dimensions in $\h$:
\[
    \beta_{\img}^{n,\h}(X;\Bbbk)
    \coloneqq
    \dim\img\varphi^{n,\h}.
\]
The kernel and cokernel of $\varphi^{n,\h}$ respectively record cohomology classes destroyed and created by this selective deletion.

\subsection{Persistence}
Let $P$ be a poset. A pointwise finite-dimensional (p.f.d.) persistence module over $P$ is a functor $\cM\colon P\to \vect$. Given two persistence modules $\cM, \cN\colon P\to \vect$, their direct sum $\cM\oplus\cN$ is the persistence module defined pointwise by $(\cM\oplus\cN)_p\coloneqq \cM_p\oplus\cN_p$ and $(\cM\oplus\cN)_{p\leq q}\coloneqq \cM_{p\leq q}\oplus\cN_{p\leq q}$. A persistence module is called indecomposable if it cannot be written as a direct sum of non-trivial submodules. 

In the pointwise finite-dimensional setting, every persistence module decomposes uniquely (up to reordering) as a direct sum of indecomposable modules with local endomorphism rings~\cite{Bakke_Crawley20}. Consequently, the classification of p.f.d. persistence modules reduces to characterizing these indecomposables. A central class of such modules is given by interval modules, which are defined via the combinatorial structure of the poset.

\begin{definition}
A subset $I \subseteq P$ is an interval if it is both convex and connected. Explicitly:
\begin{itemize}
    \item $I$ is convex if for all $p,q \in I$ and $r \in P$ such that $p \leq r \leq q$, it holds that $r \in I$.
    \item $I$ is connected if for all $p,q \in I$ there exists a sequence $\{r_i\}_{i=0}^n \subseteq I$ such that $r_0=p$, $r_n=q$, and either $r_i \leq r_{i+1}$ or $r_i \geq r_{i+1}$ for each $0\leq i < n$.
\end{itemize}
\end{definition}

Given an interval $I \subseteq P$, the interval persistence module $\Bbbk[I]$ is defined as
\begin{equation*}
\Bbbk[I]_p=\begin{cases}\Bbbk & \text{ if } p\in I ,\\
0 & \text{ otherwise,}
\end{cases}
\qquad \text{ and } \qquad
\Bbbk[I]_{p\leq q}=\begin{cases}
\mathrm{Id}_\Bbbk & \text{ if } p,q\in I, \\
0 & \text{ otherwise.}
\end{cases}
\end{equation*}
\begin{proposition}[{\!\cite[Proposition 2.2]{Botnan_Lesnick18}}]
    For any interval $I$, the module $\Bbbk[I]$ is indecomposable and its endomorphism ring $\End(\Bbbk[I])$ is local.
\end{proposition}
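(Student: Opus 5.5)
The plan is to compute the endomorphism ring of $\Bbbk[I]$ directly and show it is isomorphic to $\Bbbk$. A field is a local ring, and a module whose endomorphism ring is local has no nontrivial idempotents. So both claims follow from this single computation.

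First, let $f\colon \Bbbk[I]\to\Bbbk[I]$ be a morphism of persistence modules. At each $p\notin I$ the component $f_p$ is the zero map on the zero space. At each $p\in I$ the component $f_p$ is a linear endomorphism of $\Bbbk$, so it is multiplication by a scalar $\lambda_p\in\Bbbk$.

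Next, I would use naturality to show these scalars agree. For $p\leq q$ with both in $I$, the square formed by $f_p$, $f_q$ and the transition map $\Bbbk[I]_{p\leq q}=\mathrm{Id}_\Bbbk$ commutes. This gives $\lambda_q\cdot \mathrm{Id}=\mathrm{Id}\cdot\lambda_p$, hence $\lambda_p=\lambda_q$. The same equality holds when $q\leq p$.

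Then I would apply connectedness of $I$. Any $p,q\in I$ are joined by a zigzag $r_0,\dots,r_n$ in $I$ with consecutive elements comparable. Applying the previous step along the zigzag gives $\lambda_p=\lambda_q$, so all scalars equal a common $\lambda$. I will need $I\neq\emptyset$, which I take as implicit in the definition of an interval, since otherwise $\Bbbk[I]=0$.

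Conversely, every $\lambda\in\Bbbk$ defines an endomorphism, namely $\lambda$ at each point of $I$ and $0$ elsewhere. I have to check naturality for pairs $p\leq q$ that are not both in $I$. In that case the transition map is zero by definition, so naturality is automatic. This is the only place the definition of $\Bbbk[I]$ enters in a nontrivial way; convexity is not needed for this argument, though it is what makes $\Bbbk[I]$ a functor at all. The assignment $f\mapsto\lambda$ is a ring isomorphism $\End(\Bbbk[I])\simeq\Bbbk$, and $\Bbbk$ is local.

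Finally, for indecomposability, suppose $\Bbbk[I]=\cM\oplus\cN$. The projection onto $\cM$ is an idempotent endomorphism, so by the computation above it equals $0$ or $1$. Therefore $\cM=0$ or $\cN=0$.

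I do not expect a real obstacle here. The only points needing care are the zigzag argument, which is where connectedness is used, and the convention that intervals are non-empty.
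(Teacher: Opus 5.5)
Your proof is correct. The paper gives no proof of its own and simply cites Botnan and Lesnick. Your argument is the standard one behind that citation: use naturality squares and a zigzag in $I$ to show $\End(\Bbbk[I])\simeq\Bbbk$, which is local, so $\Bbbk[I]$ has no nontrivial idempotents and is indecomposable. Your remarks are also accurate: $I$ must be non-empty, and convexity is needed only for $\Bbbk[I]$ to be a functor.
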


When the indexing set is totally ordered, or in the case of zigzag persistence, the classification is entirely determined by these interval modules:

\begin{theorem}[Structure Theorem,{~\cite[Theorem 1.2]{Bakke_Crawley20}}]\label{Thm:Descomposicion}
Every persistence module $\cM\colon T\to \vect$ indexed by a totally ordered set $T$ decomposes as a direct sum of interval modules.
\end{theorem}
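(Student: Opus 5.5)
The statement is the structure theorem for persistence modules indexed by a totally ordered set. Since the paper quotes it from \cite{Bakke_Crawley20}, the proof should lean on general decomposition results rather than reprove everything. The argument still has a natural shape, and I would follow it as below. Here $\cM\colon T\to\vect$ is pointwise finite-dimensional. The first reduction is the Krull--Remak--Schmidt-type statement recalled above: $\cM$ decomposes as a direct sum of indecomposables with local endomorphism rings. It therefore suffices to show that every indecomposable p.f.d. module over $T$ is isomorphic to $\Bbbk[I]$ for some interval $I\subseteq T$.

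To identify indecomposables, I would fix a point $t\in T$ with $\cM_t\neq 0$. At $t$ I would build a filtration of $\cM_t$ by images and kernels. For each $s\leq t$ take the image subspace $\img(\cM_{s\leq t})$, and for each $u\geq t$ take the kernel subspace $\ker(\cM_{t\leq u})$. Because $T$ is totally ordered, the images form a chain, and so do the kernels. Their suprema and infima over cuts of $T$ give subspaces $\img^{\pm}_c$ and $\ker^{\pm}_c$ indexed by cuts $c$. These are chains in a finite-dimensional space, so only finitely many distinct subspaces occur. Following Crawley-Boevey's functorial filtration method, for each pair of cuts $(c,d)$ I would define the section space
\[
V_{c,d}=\frac{(\img^+_c\cap\ker^+_d)+(\img^-_c\cap\ker^+_d)}{(\img^-_c\cap\ker^+_d)+(\img^+_c\cap\ker^-_d)}\,,
\]
or a similar formula. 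Its dimension is the multiplicity of the interval with endpoints $c,d$ passing through $t$.

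The key step is to lift a complement of each section space to a submodule isomorphic to $\Bbbk[I]$. To do this I would choose vectors in $\cM_t$ that lie in $\img^+_c\cap\ker^+_d$ and span a complement. Each vector is transported forward by the structure maps, and backward by choosing compatible preimages. Those preimages exist by the image condition. Finite-dimensionality makes the relevant inverse systems of preimage sets Mittag-Leffler, so compatible choices exist along a cofinal sequence. This shows that the summands of the form $\Bbbk[I]$ with $t\in I$ together give a submodule that is a direct summand of $\cM$. If $\cM$ is indecomposable, it must then equal a single $\Bbbk[I]$. Alternatively, I would cite that $\End(\Bbbk[I])$ is local (the proposition above) and use that such a summand splits off.

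I expect the main obstacle to be the splitting step for non-finite or non-well-ordered $T$. There, compatible preimages need not exist without a limit argument, and showing that the constructed submodule has a complement requires the Mittag-Leffler and compactness argument. For finite $T$ the whole proof reduces to linear algebra and induction on the length of the chain: split off the bars that start at the minimum element via choices of complements of kernels.
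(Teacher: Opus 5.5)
The paper gives no proof of this statement; it quotes it from \cite{Bakke_Crawley20}. Your sketch therefore has to stand on its own, and it has a genuine gap exactly at the step that carries the weight.

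The reduction to indecomposables is fine. So is the construction at $t$: if the chosen vectors form a basis of $\cM_t$ adapted to both the image chain and the kernel chain (which is what disjoint, covering sections give you), then the span $S$ of their forward images and compatible backward lifts is a submodule with $S_t=\cM_t$ and $S\cong\bigoplus_{I\ni t}\Bbbk[I]^{m_I}$. But transporting and lifting vectors only produces a monomorphism $\bigoplus_{I\ni t}\Bbbk[I]^{m_I}\to\cM$. Nothing in that construction produces a complement, so ``this shows that \dots\ a direct summand of $\cM$'' does not follow.

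Your fallback is also invalid. Locality of $\End(\Bbbk[I])$ splits off $\Bbbk[I]$ only once you have a map $g\colon\cM\to\Bbbk[I]$ whose composite with the inclusion $f\colon\Bbbk[I]\to\cM$ is invertible. An embedded interval module need not be a summand: for example, $\Bbbk[\{2,3\}]\subseteq\Bbbk[\{1,2,3\}]$ over $1<2<3$.

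What is missing is the construction of the complement $C$. Since $S_t=\cM_t$, you must have $C_t=0$. For $s<t$ the forced choice $C_s=\ker\cM_{s\le t}$ does work, because $\cM_{s\le t}$ maps $S_s$ isomorphically onto $\img\cM_{s\le t}$. The real content is at $u>t$. There $S_u=\img\cM_{t\le u}$, and you need complements $C_u$ of $\img\cM_{t\le u}$ with $\cM_{u\le u'}(C_u)\subseteq C_{u'}$ for all $t<u\le u'$; this is the dual of your lifting problem.

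For finite $T$ this is a downward induction. If $u'$ is the successor of $u$, then $\cM_{u\le u'}^{-1}(C_{u'})+\img\cM_{t\le u}=\cM_u$, so $C_u$ can be chosen inside $\cM_{u\le u'}^{-1}(C_{u'})$.

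For infinite $T$, the compatible families on finite subsets form nonempty finite-dimensional affine spaces. You then need the fact that an inverse limit of such spaces over a directed set is nonempty (linear compactness, via the descending chain condition on affine subspaces). Your backward lifting needs this same fact, not Mittag-Leffler along a cofinal sequence, whenever the part of $I$ below $t$ has uncountable coinitiality.

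Alternatively, carry out Crawley-Boevey's argument at every point rather than at a single $t$. Let $V^+_{I,s}\supseteq V^-_{I,s}$ be the numerator and denominator of your section space computed at $s$. Choose complements $W_{I,s}$ of $V^-_{I,s}$ in $V^+_{I,s}$, compatibly for all $s\in I$. Disjointness and covering of the sections at each $s$ then give $\cM_s=\bigoplus_{I\ni s}W_{I,s}$, hence $\cM=\bigoplus_I W_I$ directly. In that route the Krull--Remak--Schmidt reduction is not needed at all.
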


\begin{theorem}[{Gabriel's Theorem,~\cite[Theorem 2.5]{Carlsson_Silva10}}]\label{Thm:Gabriel}
Every pointwise finite-dimensional zigzag persistence module decomposes as a direct sum of interval modules.
\end{theorem}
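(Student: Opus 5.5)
The statement to prove is Gabriel's Theorem for pointwise finite-dimensional zigzag modules. I would reduce to the finite case and then use a splitting argument by induction on the length of the zigzag.

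\emph{Finite length.} A zigzag module is a diagram $\cM_0 \leftrightarrow \cM_1 \leftrightarrow \cdots \leftrightarrow \cM_n$ in which each arrow points in one of the two directions. I would argue by induction on $n$, the case $n=0$ being trivial. Suppose first that the arrow between indices $n-1$ and $n$ is $f\colon\cM_{n-1}\to\cM_n$. By induction, the truncation $\cM|_{[0,n-1]}$ decomposes as $\bigoplus_j \Bbbk[I_j]$. The task is to choose this decomposition so that it is compatible with $f$.

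\emph{Refining via $f$.} Consider the filtration of $\cM_{n-1}$ by $\ker f$ together with the images and kernels coming from the left part of the diagram. The aim is a basis of $\cM_{n-1}$ adapted simultaneously to two pieces of data: the summands of the truncated decomposition ending at $n-1$, and $\ker f$. The standard approach, following the Auslander--Reiten or reflection-functor proof, or the constructive proof of Carlsson--de Silva, goes as follows.
\begin{itemize}
\item Order the intervals ending at $n-1$ by their left endpoint and type.
\item Perform a Gaussian elimination on $f$ restricted to these summands. The admissible changes of basis are the automorphisms of $\bigoplus_j\Bbbk[I_j]$, namely upper-triangular operations between intervals $I\to J$ whenever there is a nonzero morphism $\Bbbk[I]\to\Bbbk[J]$.
\item After this elimination, each summand either is sent injectively by $f$ onto an independent piece of $\cM_n$, and so extends to an interval ending at $n$, or is killed by $f$ and stays ending at $n-1$.
\item A complement of $\img f$ in $\cM_n$ contributes the intervals $\{n\}$.
\end{itemize}
The case where the arrow is $g\colon\cM_n\to\cM_{n-1}$ is dual: apply the argument to the dual module $\cM^*$, or run the symmetric elimination on $\img g$.

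The main obstacle is the elimination step. One must check that the partial order of ``nonzero Hom'' among intervals sharing the right endpoint $n-1$ is a total preorder on the relevant summands, so that triangular basis changes suffice to put $f$ in echelon form. I would verify this by a case analysis on the orientation of the arrow entering each left endpoint.

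\emph{Alternative.} As an alternative for the finite case, I would simply invoke Gabriel's classification: quivers of type $A_n$ are of finite representation type, and their indecomposables correspond to positive roots, which are exactly the intervals.

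\emph{Pointwise finite-dimensional case.} For infinite zigzags, indexed by $\mathbb{Z}$ or a subset, I would pass to a limit argument as in Botnan's proof. Using local endomorphism rings together with the Krull--Remak--Schmidt--Azumaya theorem~\cite{Bakke_Crawley20}, it suffices to show that every indecomposable p.f.d.\ zigzag module is an interval module. For an indecomposable $\cM$, choose a point $p$ with $\cM_p\ne0$. Consider the functorial filtrations of $\cM_p$ obtained from the finite truncations, that is, the intersections of the images and kernels of the maps coming from the left and from the right. Pointwise finite-dimensionality makes these stabilize. A vector in the appropriate stabilized subquotient then generates an interval submodule that is a direct summand, exactly as in Crawley-Boevey's functorial filtration method. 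Indecomposability then forces $\cM$ to be that interval.
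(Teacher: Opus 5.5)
The paper does not prove this statement. It quotes \cite[Theorem~2.5]{Carlsson_Silva10}, which concerns finite zigzags, and finite zigzags are all it ever uses: the five-term zigzag at the end of the proof of Theorem~\ref{Thm:Decomposition2}, the case $N=1$, and the three-term zigzag in part~(ii) of Theorem~\ref{Thm:DescomposicionCohesiveDim2}.

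Your finite-length induction is a correct, self-contained proof. In substance it is the right-filtration argument that Carlsson--de Silva use to make the decomposition algorithmic: the flag on $\cM_{n-1}$ preserved by your triangular changes of basis is their right filtration. Your ``alternative'' of quoting Gabriel's classification plus Krull--Schmidt is just the citation the paper makes.

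The point you single out as the main obstacle does hold, and the case analysis is short. For $a<b$, exactly one of $\mathrm{Hom}(\Bbbk[[a,n-1]],\Bbbk[[b,n-1]])$ and $\mathrm{Hom}(\Bbbk[[b,n-1]],\Bbbk[[a,n-1]])$ is nonzero. The latter is nonzero precisely when the arrow between $b-1$ and $b$ is $b-1\to b$. Two further facts make the elimination legitimate:
\begin{enumerate}
\item A nonzero morphism between such interval modules is a scalar on the overlap, and the overlap contains $n-1$.
\item For distinct summands $I\neq J$, the map $\mathrm{id}+c\,\iota_J\phi\,\pi_I$ is an automorphism of the truncated module.
\end{enumerate}
Hence the intervals ending at $n-1$ are totally ordered as follows:
\begin{enumerate}
\item first the $[b,n-1]$ with $b-1\leftarrow b$, by increasing length;
\item then $[0,n-1]$;
\item then the $[b,n-1]$ with $b-1\to b$, by decreasing length.
\end{enumerate}
Each summand's vector may absorb multiples of all lower ones, and copies of the same interval mix freely. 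A basis adapted simultaneously to this flag and to $\ker f$ (resp.\ $\img g$) always exists, and it is realized by an automorphism of the truncation. This is exactly what your elimination needs.

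Compared with the citation, your route gives an explicit algorithm. It is the same kind of compatible-complement bookkeeping the paper itself does in the proof of Theorem~\ref{Thm:Decomposition2}. The citation gains brevity.

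Your final paragraph on infinite zigzags is not needed for the statement as the paper uses it, and as written it is a pointer rather than a proof. The cited theorem does not cover infinite zigzags; for those the relevant result is Botnan's. Your step ``a vector in the stabilized subquotient generates an interval submodule that is a direct summand'' carries the whole difficulty. It needs two things, and neither is justified:
\begin{enumerate}
\item a coherent choice of preimages along backward arrows, possibly infinitely often;
\item the construction of a complementary submodule.
\end{enumerate}
Either restrict the statement to finite zigzags, or replace that paragraph with a precise reference.
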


\subsection{Persistent cohomology of sheaves}\label{Sec:Cohomologiapersistentehaces}

Given the central role that persistent sheaf cohomology of topological type will play in this paper, we now summarize the foundations introduced by Russold~\cite{Russold22}. 

Consider a cofiltration of simplicial complexes of the form
\[
X^{\bullet}\colon \,X^0 \supseteq X^1 \supseteq X^2 \supseteq \dots \supseteq X^M\,,
\]
and a cellular sheaf $\cF$ on the initial complex $X^0$. The successive restriction of $\cF$ to the different subcomplexes $X^j$ defines a collection of cellular sheaves $\{\cF_{|_{X^j}}\}_{j=0}^M$. Moreover, for each $1\leq j\leq M$, one has the relation  $\cF_{|_{X^j}}=i_j^{-1}(\cF_{|_{X^{j-1}}})$, where $i_j\colon X^{j}\hookrightarrow X^{j-1}$ is the inclusion map. By the functoriality of cellular sheaf cohomology, we obtain a sequence of linear maps:
\begin{equation*}
H^n(X^0;\cF)\longrightarrow H^n(X^1;\cF_{|_{X^1}})\longrightarrow H^n(X^2;\cF_{|_{X^2}})\longrightarrow \dots \longrightarrow H^n(X^M;\cF_{|_{X^M}})\,.
\end{equation*}
This yields a persistence module 
\[
H^{n}(X^\bullet;\cF_{|_{X^\bullet}})\colon [M]\to \vect\,,
\]
which encodes the changes undergone by the algebraic information of the sheaf $\cF$ along the cofiltration $X^\bullet$.

As Russold points out, this construction naturally extends to the case where the initial cofiltration $X^\bullet$ is indexed by a poset $P$ with a minimum element $p_0$, and where $\cF$ is a cellular sheaf on the simplicial complex $X^{p_0}$ associated with that minimum. The restrictions of $\cF$ to the remaining complexes of $X^\bullet$ and the linear maps induced in cohomology define a persistence module 
\[
H^{n}(X^\bullet;\cF_{|_{X^\bullet}})\colon P\to \vect\,.
\]
The persistence modules resulting from this procedure are called persistence modules of topological type or persistence modules of type T~\cite{Russold22}.

\section{Structure theorems}\label{Sec:StructureTheorems}

In this section, we establish structure theorems for persistence modules of the form $$\cM\colon\mathcal{P}^{*,\text{op}}_N\longrightarrow \vect\,.$$
As a functor, $\cM$ assigns a linear map $\cM_{\h \leq \h'}\colon \cM_{\h'} \longrightarrow \cM_{\h}$ to each inclusion $\h \leq \h'$ in $\mathcal{P}^{*}_N$. We prove that, provided these linear maps satisfy specific injectivity and isomorphism conditions, the module decomposes uniquely into a direct sum of interval persistence modules.

\begin{theorem}\label{Thm:Decomposition2}
    Let $\cM\colon \cP^{*,\op}_{2}\longrightarrow \vect$ be a p.f.d. persistence module. Suppose there exists an index $i\in [2]$ satisfying the following conditions:
    \begin{enumerate}
        \item The linear map $\cM_{[2]\smallsetminus \{i\}\leq [2]}$ is an isomorphism.
        \item \label{cond:injectivity} The linear map $\cM_{\h\leq\h'}$ is injective for all $\h\neq \{i\}$ and $\h' \geq \h$.
    \end{enumerate}
    Then $\cM$ decomposes as a direct sum of interval persistence modules.
\end{theorem}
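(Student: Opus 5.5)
By symmetry of $\cP^*_2$ under permutations of $[2]$, I will assume $i=0$. Then $\cM_{[2]}\to\cM_{\{1,2\}}$ is an isomorphism, and every structure map whose target is not $\{0\}$ is injective. Write $V\coloneqq\cM_{[2]}$.

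\textbf{Step 1: reduce to subspace data.} Commutativity gives $\cM_{\{1\}\le\{1,2\}}\circ\cM_{\{1,2\}\le[2]}=\cM_{\{1\}\le\{0,1\}}\circ\cM_{\{0,1\}\le[2]}$. Since $\cM_{\{1,2\}\le[2]}$ is onto, the image of $\cM_{\{1,2\}}$ in $\cM_{\{1\}}$ equals the image of $V$, and this lies inside the image of $\cM_{\{0,1\}}$. The same holds in $\cM_{\{2\}}$. Using injectivity, I identify everything with subspaces:
\[
V\subseteq\cM_{\{0,1\}}\subseteq\cM_{\{1\}},\qquad V\subseteq\cM_{\{0,2\}}\subseteq\cM_{\{2\}},\qquad \cM_{\{1,2\}}\cong V.
\]
The only non-injective data are then the maps $g_1\colon\cM_{\{0,1\}}\to\cM_{\{0\}}$ and $g_2\colon\cM_{\{0,2\}}\to\cM_{\{0\}}$. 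They agree on $V$, where both restrict to $f\coloneqq\cM_{\{0\}\le[2]}$.

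\textbf{Step 2: split off the part generated by $V$.} Let $\cM''\subseteq\cM$ be the submodule generated at $[2]$. It equals $V$ at every index except $\{0\}$, where it is $f(V)$. Choosing a complement of $\ker f$ in $V$ shows that $\cM''$ is a direct sum of copies of $\Bbbk[\cP^*_2]$ and of $\Bbbk[\cP^*_2\smallsetminus\{\{0\}\}]$, and the latter is an interval. I then seek a complementary submodule $\cM'$. It must vanish at $[2]$ and $\{1,2\}$, and consist of:
\begin{itemize}
\item complements $W_{01}$ of $V$ in $\cM_{\{0,1\}}$ and $W_{02}$ of $V$ in $\cM_{\{0,2\}}$;
\item complements $W_1\supseteq W_{01}$ of $V$ in $\cM_{\{1\}}$ and $W_2\supseteq W_{02}$ of $V$ in $\cM_{\{2\}}$;
\item a complement $W_0$ of $f(V)$ in $\cM_{\{0\}}$ containing $g_1(W_{01})+g_2(W_{02})$.
\end{itemize}
The complements $W_1$ and $W_2$ exist trivially, since $V\subseteq\cM_{\{0,1\}}\subseteq\cM_{\{1\}}$ and $W_{01}\cap V=0$. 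The real requirement is
\[
\bigl(g_1(W_{01})+g_2(W_{02})\bigr)\cap f(V)=0 .
\]
Once it holds, I extend $g_1(W_{01})+g_2(W_{02})$ to a complement $W_0$ of $f(V)$.

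\textbf{Step 3 (main obstacle): choosing $W_{01}$ and $W_{02}$ jointly.} The tool is a correction trick. If $g_1(x)=f(v)$ with $v\in V$, then $x-v\in\ker g_1$, so one can push elements of a complement into the kernel by subtracting elements of $V$. I would proceed as follows.
\begin{enumerate}
\item Consider the subspace $S\coloneqq\{(x,y)\in\cM_{\{0,1\}}\oplus\cM_{\{0,2\}} : g_1(x)+g_2(y)\in f(V)\}$, which contains $V\oplus V$.
\item Choose a basis of $S$ modulo $V\oplus V$. Each pair $(x,y)$ with $g_1(x)+g_2(y)=f(v)$ can be replaced by $(x-v,y)$, which lies in $\ker(g_1+g_2)$.
\item Build $W_{01}$ and $W_{02}$ from these corrected vectors, together with complements of $V$ that map isomorphically onto a complement of $f(V)$.
\end{enumerate}
The subtle point is that a corrected pair $(x-v,y)$ must contribute vectors to $W_{01}$ and $W_{02}$ separately. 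So the corrections have to be organised as a filtration argument: first handle $\ker g_1$ and $\ker g_2$, then handle $g_1^{-1}(f(V)+\img g_2)$. This is where I expect the real work to lie.

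\textbf{Step 4: conclude.} The complement $\cM'$ is supported on $\{\{1\},\{0,1\},\{0\},\{0,2\},\{2\}\}$. With the opposite order these indices form the zigzag
\[
\{1\}\leftarrow\{0,1\}\rightarrow\{0\}\leftarrow\{0,2\}\rightarrow\{2\}.
\]
By Gabriel's Theorem~\ref{Thm:Gabriel}, $\cM'$ is a sum of zigzag intervals. Each such interval remains convex and connected in $\cP^{*,\op}_2$: the only element of the poset lying between two of them is $\{1,2\}$, sitting between $\{1\}$ or $\{2\}$ and $[2]$, and $[2]$ is not in the support. Hence $\cM=\cM'\oplus\cM''$ is a direct sum of interval modules.
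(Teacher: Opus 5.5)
Your Steps 1, 2 and 4 are sound, and the overall strategy is the paper's. You split off the summands generated at $[2]$, here copies of $\Bbbk[\cP^{*,\op}_2]$ and $\Bbbk[\cP^{*,\op}_2\smallsetminus\{\{0\}\}]$. You note that the rest lives on the zigzag $\{1\}\leftarrow\{0,1\}\to\{0\}\leftarrow\{0,2\}\to\{2\}$, and you apply Gabriel's theorem. The gap is Step 3, which is the only non-formal part of the proof. You never show that complements $W_{01}$, $W_{02}$ with $\bigl(g_1(W_{01})+g_2(W_{02})\bigr)\cap f(V)=0$ exist. What you give is a tentative plan (the space $S$, corrected pairs, an unspecified ``filtration argument''), and you say yourself that the real work lies there.

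Part of that plan is also not available as stated. You cannot in general choose the complements to map isomorphically onto a complement of $f(V)$: $g_1$ may kill vectors outside $V$, and $g_1(W_{01})$, $g_2(W_{02})$ may be forced to overlap outside $f(V)$. Such overlap is harmless; it is exactly what produces the long zigzag bars. So, as written, the splitting $\cM=\cM''\oplus\cM'$ is not established.

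The obstacle comes from choosing $W_{01},W_{02}$ before $W_0$. If you reverse the order, it disappears and no joint choice is needed:
\begin{itemize}
\item Fix an arbitrary complement $W_0$ of $f(V)$ in $\cM_{\{0\}}$.
\item Let $\pi\colon\cM_{\{0\}}\to f(V)$ be the projection along $W_0$, and let $s\colon f(V)\to V$ be a linear section of $f$.
\item For any complement $W'_{01}$ of $V$ in $\cM_{\{0,1\}}$, set $W_{01}\coloneqq\{w-s\pi g_1(w) : w\in W'_{01}\}$.
\end{itemize}
You only subtracted elements of $V$, so $W_{01}$ is still a complement of $V$. Because $g_1|_V=f$ and $fs=\Id$, you get $g_1\bigl(w-s\pi g_1(w)\bigr)=(\Id-\pi)\,g_1(w)\in W_0$. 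Do the same for $W_{02}$ with $g_2$, independently. Then $g_1(W_{01})+g_2(W_{02})\subseteq W_0$, and the rest of your argument goes through.

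This is precisely the device in the paper's proof. The paper first fixes a complement $\cN_{\{2\}}$ of the image of a single vector $x_0\in\cM_{[2]}$ at the non-injective index. It then corrects bases at the other indices by multiples of the images of $x_0$, and recurses one vector at a time. With the fix above, your all-at-once version is a slightly cleaner variant of the same argument.
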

\begin{proof} For notational simplicity, we present the proof for the case $i=2$. The arguments for the other cases are completely analogous.

    Let $x_0\in \cM_{\{0,1,2\}}$ be a nonzero vector, and define the set
    \[
    I_0=\{\h\in \cP^{*,\op}_2 : \cM_{\h\leq \{0,1,2\}}(x_0)\neq 0\}\, .
    \]
    From the injectivity required by condition~\eqref{cond:injectivity}, it follows that either $I_0=\cP^{*,\op}_2\smallsetminus\{2\}$ or $I_0=\cP^{*,\op}_2$.

    Let $\cN_{\{2\}}$ be a complement of $\langle \cM_{\{2\}\leq\{0,1,2\}}(x_0)\rangle$, and let $\{y_1,\dots,y_l\}$ be a basis of $\cN_{\{2\}}$. We can find a basis $\{x_0,x_1,\dots,x_k\}$ of $\cM_{\{0,1,2\}}$ such that $\cM_{\{2\}\leq\{0,1,2\}}(x_i)\in \cN_{\{2\}}$ for all $1\leq i\leq k$. To do this, first extend $\{x_0\}$ to a basis $\{x_0,x_1',\dots,x_k'\}$. If $\cM_{\{2\}\leq\{0,1,2\}}(x_0)$ is nonzero, then for each $1\leq j\leq k$ we can write
    \[
    \cM_{\{2\}\leq\{0,1,2\}}(x_j')=\lambda_0^j\, \cM_{\{2\}\leq\{0,1,2\}}(x_0)+\lambda_1^j \,y_1+\cdots+\lambda_l^j\,y_l
    \]
    for some scalars $\lambda_0^j, \dots, \lambda_l^j$. By setting $x_j\coloneqq x_j'-\lambda_0^j\,x_0$, we obtain a basis $\{x_0,x_1,\dots,x_k\}$ of $\cM_{\{0,1,2\}}$ satisfying the desired condition.

    Denote by $\cN_{\{0,1,2\}}$ the subspace spanned by $\{x_1,\dots,x_k\}$ and define
    \[
    \begin{aligned}
        \cN_{\{0,1\}} &\coloneqq \cM_{\{0,1\}\leq\{0,1,2\}}(\cN_{\{0,1,2\}})\,, \\[1.5ex]
        \cN_{\{0,2\}}^{\prime} &\coloneqq \cM_{\{0,2\}\leq\{0,1,2\}}(\cN_{\{0,1,2\}})\,, \\[1.5ex]
        \cN_{\{1,2\}}^{\prime} &\coloneqq \cM_{\{1,2\}\leq\{0,1,2\}}(\cN_{\{0,1,2\}})\,.
    \end{aligned}
    \]
    Next, let $\cN_{\{0,2\}}^{\prime\prime}$ and $\cN_{\{1,2\}}^{\prime\prime}$ be subspaces such that
    \[
    \begin{aligned}
        \cM_{\{0,2\}}& =\langle\cM_{\{0,2\}\leq\{0,1,2\}}(x_0)\rangle \oplus \cN_{\{0,2\}}^{\prime}\oplus \cN_{\{0,2\}}^{\prime\prime}\, ,\\[1.5ex]
        \cM_{\{1,2\}}&=\langle \cM_{\{1,2\}\leq\{0,1,2\}}(x_0)\rangle \oplus \cN_{\{1,2\}}^{\prime}\oplus \cN_{\{1,2\}}^{\prime\prime}\, .
    \end{aligned}
    \]
    Repeating the preceding argument, we may further assume that
    \[
    \begin{aligned}
        \cM_{\{2\}\leq\{0,2\}}(\cN_{\{0,2\}}^{\prime\prime})&\subseteq \cN_{\{2\}}\,,\\[1.5ex]
        \cM_{\{2\}\leq\{1,2\}}(\cN_{\{1,2\}}^{\prime\prime})& \subseteq \cN_{\{2\}}\, .
    \end{aligned}
    \]
    Thus, the subspaces
    \[
    \begin{aligned}
    \cN_{\{0,2\}}&\coloneqq \cN_{\{0,2\}}^{\prime}\oplus \cN_{\{0,2\}}^{\prime\prime}\, ,\\[1.5ex]
    \cN_{\{1,2\}}&\coloneqq \cN_{\{1,2\}}^{\prime}\oplus \cN_{\{1,2\}}^{\prime\prime}\, ,
    \end{aligned}
    \]
    satisfy
    \[
    \begin{aligned}
        \cM_{\{2\}\leq\{0,2\}}(\cN_{\{0,2\}})&\subseteq \cN_{\{2\}}\,,\\[1.5ex]
        \cM_{\{2\}\leq\{1,2\}}(\cN_{\{1,2\}})& \subseteq \cN_{\{2\}}\, .
    \end{aligned}
    \]
    For $\h=\{0\}$ and $\h=\{1\}$, define
    \[
    \begin{aligned}
        \cN_{\{0\}}^{\prime}&\coloneqq \cM_{\{0\}\leq\{0,2\}}(\cN_{\{0,2\}})\, ,\\[1.5ex]
        \cN_{\{1\}}^{\prime}&\coloneqq \cM_{\{1\}\leq\{1,2\}}(\cN_{\{1,2\}})\, .
    \end{aligned}
    \]
    We also choose subspaces $\cN_{\{0\}}^{\prime\prime}$ and $\cN_{\{1\}}^{\prime\prime}$ such that
    \[
    \begin{aligned}
        \cM_{\{0\}}&=\langle \cM_{\{0\}\leq\{0,1,2\}}(x_0)\rangle \oplus \cN_{\{0\}}^{\prime} \oplus \cN_{\{0\}}^{\prime\prime}\, ,\\[1.5ex]
        \cM_{\{1\}}&=\langle \cM_{\{1\}\leq\{0,1,2\}}(x_0)\rangle \oplus \cN_{\{1\}}^{\prime} \oplus \cN_{\{1\}}^{\prime\prime} \, .
    \end{aligned}
    \]
    Finally, define
    \[
    \begin{aligned}
        \cN_{\{0\}}&\coloneqq \cN_{\{0\}}^{\prime} \oplus \cN_{\{0\}}^{\prime\prime}\,,\\[1.5ex]
       \cN_{\{1\}}&\coloneqq \cN_{\{1\}}^{\prime} \oplus \cN_{\{1\}}^{\prime\prime}\, .
    \end{aligned}
    \]
    The subspaces $\cN_{\h}$ we have defined satisfy $\cM_{\h\leq\h'}(\cN_{\h'})\subseteq \cN_{\h}$ for all $\h\leq \h'$. Therefore, the subspaces $\cN_{\h}$ form a submodule $\cN$ of the persistence module $\cM$ such that
    \[
    \cM \simeq \Bbbk[I_0]\oplus\cN\, .
    \]
    Moreover, $\dim\cN_{\{0,1,2\}}=\dim\cN_{\{0,1\}}=\dim \cM_{\{0,1,2\}}-1$.

    Applying the same argument to the module $\cN$, we obtain a decomposition
    \[
    \cN\simeq \Bbbk[I_1]\oplus \cN_1\, .
    \]
    Repeating the argument recursively, we arrive at a decomposition
    \begin{equation}\label{Eq:MkDescomposicionDim2}
        \cM \simeq \Bbbk[I_0] \, \oplus\, \Bbbk[I_1]\,\oplus\,\cdots\,\oplus\, \Bbbk[I_k]\,\oplus \cN_k\, ,
    \end{equation}
    where $\cN_k$ has support in the interval $I=\{\{0\},\{1\},\{2\},\{0,2\},\{1,2\}\}$. This interval forms a zigzag, and by Gabriel's theorem (Theorem~\ref{Thm:Gabriel}), the existence of the decomposition of $\cM$ into interval modules follows. 
\end{proof}

\begin{example}\label{Ex:Decomposition2}
    Consider a persistence module $\cM\colon \cP^{*,\op}_2\to \mathbf{vect}_{\mathbb{R}}$ with the structural configuration depicted in Figure~\ref{Fig:Decomposition2-Example}, which indicates both the point-wise dimensions and the properties of the linear maps (injectivity and isomorphisms). That is, $\cM$ satisfies the conditions in Theorem~\ref{Thm:Decomposition2} for the index $i=2$.

\begin{figure}[htb!]
    \centering
    \begin{subfigure}{0.45\linewidth}
        \centering
        \includegraphics[height=3.5cm]{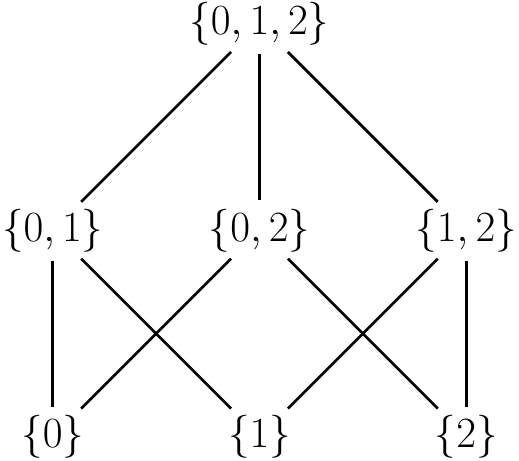}
        \caption{$\cP^{*}_2$}
    \end{subfigure}
    \hspace{1ex}
    \begin{subfigure}{0.45\linewidth}
        \centering
        \includegraphics[height=3.5cm]{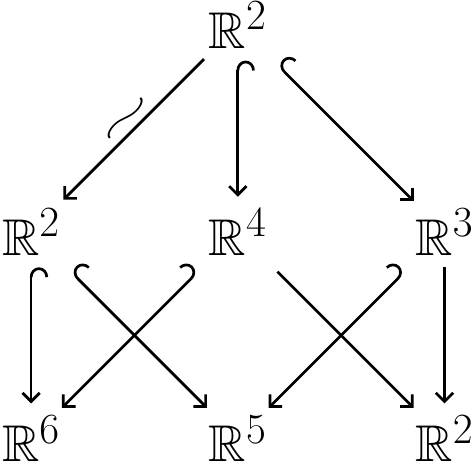}
        \caption{$\cM$}
    \end{subfigure}
    \caption{(A) Hasse diagram of the underlying poset $\cP^{*}_2$. (B) A persistence module over $\cP^{*,\op}_{2}$.}
    \label{Fig:Decomposition2-Example}
\end{figure}

    For each $\h \in \cP^{*,\op}_2$, let $\mathcal{B}_{\h}=\{e_i^{\h}\}_{i=1}^{\dim \cM_{\h}}$ be a basis of the vector space $\cM_{\h}$. Suppose that, with respect to our chosen bases, the matrices of the maps required for our computations are given by:
    \begin{align}\label{Eq:MapsDecomposition2}
        \cM_{\{0,2\} \leq\{0,1,2\}} &= \begin{pmatrix} 
            1 & 0 \\ 0 & 1 \\ 0 & 1 \\ 0 & 1 
        \end{pmatrix}, &
        \cM_{\{1,2\} \leq \{0,1,2\}} &= \begin{pmatrix} 
            1 & 1 \\ 0 & -1 \\ 0 & -1 
        \end{pmatrix}, \\[2ex]
        \cM_{\{2\} \leq \{0,2\}} &= \begin{pmatrix} 
            1 & 1 & 0 & 0 \\ 0 & 0 & 0 & 0 
        \end{pmatrix}, & 
        \cM_{\{2\} \leq \{1,2\}} &= \begin{pmatrix} 
            1 & 0 & 0 \\ 0 & 0 & 0 
        \end{pmatrix}.
    \end{align}
    By Theorem~\ref{Thm:Decomposition2}, we are guaranteed that $\cM$ decomposes as a direct sum of interval modules. We will now explicitly calculate this decomposition by following the constructive steps outlined in the proof of the theorem.

    First, notice that: 
    \begin{align*}
        \cM_{\{2\}\leq\{0,1,2\}}(e_1^{\{0,1,2\}}) &= \left(\cM_{\{2\}\leq\{1,2\}}\circ \cM_{\{1,2\}\leq\{0,1,2\}}\right)(e_1^{\{0,1,2\}}) \\[1.5ex] 
        &=\begin{pmatrix} 
            1 & 0 & 0 \\ 0 & 0 & 0 
        \end{pmatrix} \begin{pmatrix} 
            1 & 1 \\ 0 & -1 \\ 0 & -1 
        \end{pmatrix} \begin{pmatrix}
            1\\ 0 
        \end{pmatrix} = \begin{pmatrix}
            1 \\ 0 
        \end{pmatrix} = e_1^{\{2\}},
    \end{align*}
    and similarly,
    \begin{align*}
        \cM_{\{2\}\leq\{0,1,2\}}\big(e_2^{\{0,1,2\}}\big) &= \left(\cM_{\{2\}\leq\{1,2\}}\circ \cM_{\{1,2\}\leq\{0,1,2\}}\right)(e_2^{\{0,1,2\}}) \\[1.5ex] 
        &=\begin{pmatrix} 
            1 & 0 & 0 \\ 0 & 0 & 0 
        \end{pmatrix} \begin{pmatrix} 
            1 & 1 \\ 0 & -1 \\ 0 & -1 
        \end{pmatrix} \begin{pmatrix}
            0\\ 1 
        \end{pmatrix} = \begin{pmatrix}
            1 \\ 0 
        \end{pmatrix} = e_1^{\{2\}}.
    \end{align*}
    Therefore, we consider the following change of basis in $\cM_{\{0,1,2\}}$:
    \[
        \mathcal{B}_{\{0,1,2\}}\longrightarrow \mathcal{B}_{\{0,1,2\}}'\coloneqq \{e_1^{\{0,1,2\}},\,e_2^{\{0,1,2\}}-e_1^{\{0,1,2\}}\}.
    \]
    Since $\cM_{\{2\}\leq \{0,1,2\}}(e_{1}^{\{0,1,2\}})=e_{1}^{\{2\}}$ and $\cM_{\{2\}\leq \{0,1,2\}}(e_{2}^{\{0,1,2\}}-e_{1}^{\{0,1,2\}})=0$, it follows that $\cM$ has $\mathbb{R}[\cP^{*,\op}_2]$ and $\mathbb{R}[\cP^{*,\op}_2\smallsetminus\{2\}]$ as direct summands. To complete the decomposition, we perform the following changes of basis:
    \begin{align*}
        \mathcal{B}_{\{0,2\}}& \longrightarrow \mathcal{B}_{\{0,2\}}'\coloneqq \{e_{1}^{\{0,2\}},\,-e_1^{\{0,2\}}+e_{2}^{\{0,2\}}+e_{3}^{\{0,2\}}+e_4^{\{0,2\}},\, e_3^{\{0,2\}},\,e_4^{\{0,2\}}\},\\[1.5ex]
        \mathcal{B}_{\{1,2\}}& \longrightarrow \mathcal{B}_{\{1,2\}}'\coloneqq \{e_1^{\{1,2\}},\,-e_2^{\{1,2\}}-e_3^{\{1,2\}}, \,e_3^{\{1,2\}}\},
    \end{align*}
    where the first two vectors of $\mathcal{B}_{\{0,2\}}'$ and $\mathcal{B}_{\{1,2\}}'$ correspond to the images of the vectors in $\mathcal{B}_{\{0,1,2\}}'$ under their respective restriction maps. With respect to these new bases, the matrices become: 
    \begin{equation}
        \label{Eq:MatricesNewBases}
        \begin{aligned}
            \cM_{\{0,2\} \leq\{0,1,2\}} &= \begin{pmatrix} 
                1 & 0 \\ 0 & 1 \\ 0 & 0 \\ 0 & 0 
            \end{pmatrix}, &
            \cM_{\{1,2\} \leq \{0,1,2\}} &= \begin{pmatrix} 
                1 & 0 \\ 0 & 1 \\ 0 & 0 
            \end{pmatrix}, \\[2ex]
            \cM_{\{2\} \leq \{0,2\}} &= \begin{pmatrix} 
                1 & 0 & 0 & 0 \\ 0 & 0 & 0 & 0 
            \end{pmatrix}, & 
            \cM_{\{2\} \leq \{1,2\}} &= \begin{pmatrix} 
                1 & 0 & 0 \\ 0 & 0 & 0 
            \end{pmatrix}.
        \end{aligned}
    \end{equation}
    Let us define the subspaces $\cN_{\{0,2\}}\coloneqq \langle e_3^{\{0,2\}}, \,e_4^{\{0,2\}}\rangle$ and $\cN_{\{1,2\}}\coloneqq \langle e_3^{\{1,2\}}\rangle$. Observing the block structure of the matrices in Equation~\eqref{Eq:MatricesNewBases}, it is clear that:
    \begin{align}\label{Eq:TrivialMaps}
        \cM_{\{2\}\leq \{0,2\}}(\cN_{\{0,2\}})=\cM_{\{2\}\leq \{1,2\}}(\cN_{\{1,2\}})=0\,.
    \end{align}
    Consequently, $\cM$ decomposes as the direct sum:
    \[
        \cM\simeq \mathbb{R}[\cP^{*,\op}_2]\oplus \mathbb{R}[\cP^{*,\op}_2\smallsetminus\{2\}]\oplus \cN\,,
    \]
    where the submodule $\cN$ takes the following form: 
    \begin{center}
        \includegraphics[width=0.5\linewidth]{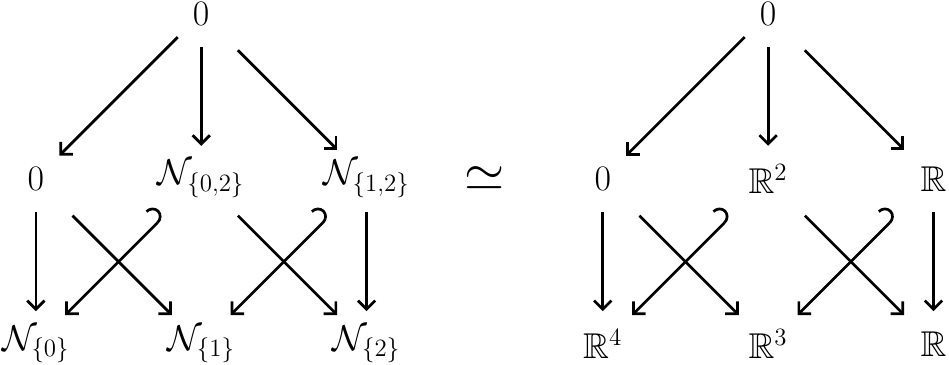}
    \end{center}
    Furthermore, since the restriction maps $\cN_{\{2\}\leq \{0,2\}}$ and $\cN_{\{2\}\leq \{1,2\}}$ are trivial (Equation~\eqref{Eq:TrivialMaps}), $\cN$ decomposes as:
    \begin{center}
        \includegraphics[width=0.95\linewidth]{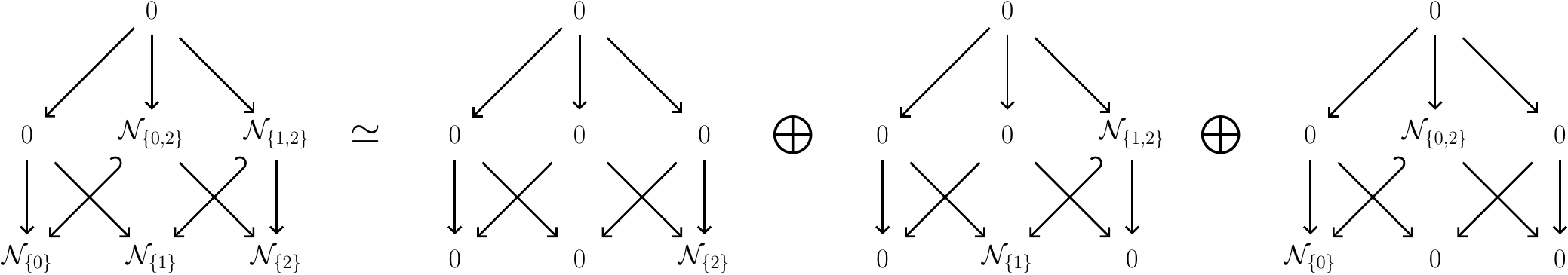}
    \end{center}
    Decomposing the summands of this final expression, we conclude that (Figure~\ref{Fig:Decomposition2-Barcode}):
    \begin{equation}\label{Eq:Decomposition2-Decomposition}
    \begin{split}
         \cM\simeq &\, \mathbb{R}[\cP^{*,\op}_2]\,\oplus \,\mathbb{R}[\cP^{*,\op}_2\smallsetminus\{2\}]\, \oplus\, \mathbb{R}[\{\{0\},\{0,2\}\}]^2\,\oplus\\[1ex]
         & \mathbb{R}[\{\{1\},\{1,2\}\}]\,\oplus \,\mathbb{R}[\{0\}]^2\oplus \mathbb{R}[\{1\}]^2 \,\oplus\, \mathbb{R}[\{2\}].
    \end{split}
    \end{equation}
\end{example}

\begin{figure}[htb!]
    \centering
    \includegraphics[width=0.8\linewidth]{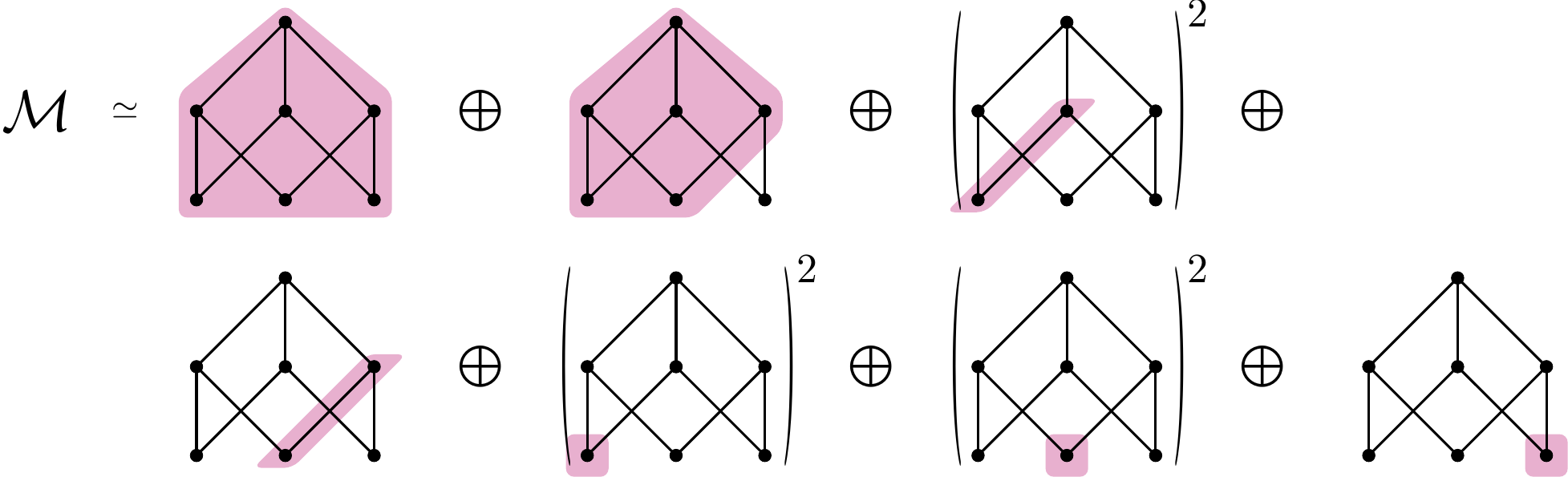}
    \caption{Interval decomposition of the persistence module $\cM$ in Example~\ref{Ex:Decomposition2}.}
    \label{Fig:Decomposition2-Barcode}
\end{figure}

\begin{lemma}\label{Lem:DecompositionMinimalInjectives}
    Let $P$ be a poset and let $\cM\colon P\to \vect$ be a p.f.d. persistence module such that for each $p\leq q \in P$ the morphism $\cM_{p\leq q}$ is injective. Suppose that $P$ has a minimum element $p_0$ and denote by $n_0$ the dimension of $\cM_{p_0}$.
    Then, $\cM$ has $\Bbbk[P]$ as a direct summand $n_0$ times, that is,
    \[
    \cM\simeq \Bbbk[P] \,\oplus\, \stackrel{n_0}{\cdots}\,\oplus\, \Bbbk[P] \, \oplus \, \cM'\, ,
    \]
    for some submodule $\cM'$ of $\cM$.
\end{lemma}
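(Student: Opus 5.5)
We take a basis $\{x_1,\dots,x_{n_0}\}$ of $\cM_{p_0}$ and push it forward. Since $p_0$ is a minimum, for every $p\in P$ there is a map $\cM_{p_0\leq p}$. We set $S_p\coloneqq \cM_{p_0\leq p}(\cM_{p_0})\subseteq \cM_p$. By injectivity, $\cM_{p_0\leq p}$ restricts to an isomorphism $\cM_{p_0}\to S_p$, so $\dim S_p=n_0$. By functoriality, $\cM_{p\leq q}(S_p)=\cM_{p\leq q}\cM_{p_0\leq p}(\cM_{p_0})=S_q$. Hence $S$ is a submodule of $\cM$.

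We also check that $S\simeq \Bbbk[P]^{n_0}$. Indeed, $P$ is an interval in itself: it is trivially convex, and it is connected through $p_0$. For each $j$, the vectors $\cM_{p_0\leq p}(x_j)$ span a one-dimensional submodule $L_j$. The transition maps of $L_j$ send the generator at $p$ to the generator at $q$, again by functoriality, so $L_j\simeq\Bbbk[P]$. Pointwise the $L_j$ form a direct sum equal to $S_p$, so $S=\bigoplus_j L_j$.

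The main step, and the main obstacle, is to find a complementary submodule $\cM'$ with $\cM_p=S_p\oplus\cM'_p$ and $\cM_{p\leq q}(\cM'_p)\subseteq\cM'_q$. Choosing complements pointwise and independently will not in general be compatible with the maps. The approach we take is to split the inclusion $S\hookrightarrow\cM$ by a module retraction $r\colon\cM\to S$, and then set $\cM'=\ker r$.

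To build $r$, fix any linear retraction $\rho\colon \cM_{p_0}\leftarrow$ nothing is needed there, since $S_{p_0}=\cM_{p_0}$. Instead we work at the top. For each $p$, define $r_p=\cM_{p_0\leq p}\circ \pi_p$, where $\pi_p\colon\cM_p\to\cM_{p_0}$ must satisfy $\pi_q\circ\cM_{p\leq q}=\pi_p$. This is a compatible family of left inverses of $\cM_{p_0\leq p}$, i.e. a cocone of $\cM$ over $P$ with vertex $\cM_{p_0}$, normalized at $p_0$. Such a family exists exactly when the colimit map $\cM_{p_0}\to\varinjlim_P\cM$ is injective. Because all maps are injective, we can compute the colimit as a union of the $\cM_p$ glued along the maps, and $\cM_{p_0}$ embeds into it. We then choose a linear retraction of $\varinjlim\cM$ onto the image of $\cM_{p_0}$ and compose it with the canonical maps $\cM_p\to\varinjlim\cM$ to obtain the $\pi_p$.

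Justifying this injectivity is the delicate point. We would argue it by showing that every relation identifying elements in the colimit passes through images of $\cM_{p_0}$, along zigzags from $p_0$. If that argument fails for a general poset, the fallback is to use the hypothesis only to invoke the local-endomorphism Krull--Remak--Schmidt decomposition of \cite{Bakke_Crawley20}. We would then argue that exactly $n_0$ indecomposable summands are nonzero at $p_0$, and that each of them, having injective maps and one-dimensional value at $p_0$, contains a copy of $\Bbbk[P]$ as a summand.

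Once $r$ is in hand, $r|_S=\Id$, so $\cM=S\oplus\ker r\simeq\Bbbk[P]^{n_0}\oplus\cM'$ with $\cM'\coloneqq\ker r$, which is the statement of Lemma~\ref{Lem:DecompositionMinimalInjectives}.
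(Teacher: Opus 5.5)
There is a genuine gap, and it cannot be filled: the statement is false as given. Your reduction is correct and sharper than the paper's. A compatible complement to $S$ exists if and only if there is a morphism $r\colon\cM\to S$ with $r_{p_0}=\Id$, that is, if and only if $\cM_{p_0}\to\varinjlim_P\cM$ is injective. The paper's proof uses the same submodule $\cM''_p=\img\cM_{p_0\leq p}$. It then simply asserts that, ``due to the injectivity of the restriction maps'', compatible complements exist.

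The step you flag as delicate is where this fails. The structure maps of a colimit of monomorphisms are injective over a directed poset, but not in general. A cycle in the Hasse diagram above $p_0$ can carry a monodromy that kills the image of $\cM_{p_0}$, and this already happens on $P=\cP^{*,\op}_2$. Define $\cM$ as follows:
\begin{itemize}
\item $\cM_{\{0,1,2\}}=\Bbbk$, and $\cM_\h=\Bbbk^2$ with basis $e_1,e_2$ for every other $\h$;
\item each map $\cM_{\h\leq\{0,1,2\}}$ with $|\h|=2$ sends $1\mapsto e_1$;
\item every map from a two-element set to a singleton is the identity, except $\cM_{\{1\}\leq\{0,1\}}$, which sends $e_1\mapsto e_1$ and $e_2\mapsto e_1+e_2$.
\end{itemize}
Every path from $\{0,1,2\}$ to a singleton sends $1\mapsto e_1$, so this is a functor. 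All its maps are injective, and $n_0=1$.

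A morphism $r\colon\cM\to\Bbbk[P]$ is a family of functionals compatible with the transition maps. Going around the hexagon gives
\[ r_{\{1\}}(e_1+e_2)=r_{\{0,1\}}(e_2)=r_{\{0\}}(e_2)=r_{\{0,2\}}(e_2)=r_{\{2\}}(e_2)=r_{\{1,2\}}(e_2)=r_{\{1\}}(e_2), \]
so $r_{\{0,1,2\}}(1)=r_{\{1\}}(e_1)=0$. A morphism $\Bbbk[P]\to\cM$ is determined by its value at the minimum. Hence every composite $\Bbbk[P]\to\cM\to\Bbbk[P]$ vanishes at $\{0,1,2\}$, and $\Bbbk[P]$ is not a direct summand of $\cM$. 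Your ``glued union'' argument therefore cannot be completed, and neither can the paper's.

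Your fallback fails on the same example. Since $n_0=1$, exactly one indecomposable summand is nonzero at $p_0$. It has injective maps and one-dimensional value there, yet it cannot be, or contain, $\Bbbk[P]$, since then $\Bbbk[P]$ would split off $\cM$. In general there is also no reason for the summands to be one-dimensional at $p_0$.

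What is missing is a hypothesis that produces the cocone. A maximum in $P$ suffices, since then $\varinjlim\cM$ is the value at the maximum and injectivity is immediate; so does a tree-shaped Hasse diagram. Where the paper uses the lemma, in Theorem~\ref{Thm:DecompositionN}, the isomorphism hypotheses supply this. Each $\cM_\h$ with $\min\h=j$ and $|\h|\geq 2$ is identified with $V_j\coloneqq\cM_{\{j,\dots,N\}}$. The module then collapses to a chain $V_0\hookrightarrow V_1\hookrightarrow\cdots\hookrightarrow V_N$ with pendant injections $V_j\hookrightarrow\cM_{\{j\}}$. A retraction of $V_0\hookrightarrow V_N$, restricted along the chain and extended across the pendant injections, gives the required $r$. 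So the conclusion survives in that application, but only because of those extra hypotheses; injectivity of the transition maps alone is not enough.
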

\begin{proof}
    Let $\cM''$ be the submodule of $\cM$ defined for each $p\in P$ by
    \[
    \cM_p^{\prime\prime}\coloneqq \img \cM_{p_0\leq p}\, .
    \]
    Due to the injectivity of the restriction maps, we can find a complement $\cM_p'$ for each $\cM_p''$ in such a way that $\cM_{p\leq q}(\cM_p') \subseteq \cM_q'$ for all $p\leq q$. This guarantees that $\cM$ splits as 
    \[
    \cM= \cM''\oplus \cM'\,.
    \]
    Finally, all morphisms of $\cM''$ are isomorphisms, and therefore
    \[
    \cM''\simeq \Bbbk[P] \,\oplus\, \stackrel{n_0}\cdots\,\oplus\, \Bbbk[P]\,.
    \]
\end{proof}

\begin{theorem}\label{Thm:DecompositionN}
    Let $\cM\colon\cP^{*,\op}_N\longrightarrow \vect$ be a persistence module. Suppose that the following conditions hold:
    \begin{enumerate}
        \item The linear map $\cM_{\h\leq \h'}$ is injective for all $\h\leq \h'$.
        \item For every $\h=\{h_0,\dots,h_m\}\in \cP^{*,\op}_N$ with $m\geq 2$ and each $1\leq i\leq m$, the linear map 
        \[
            \cM_{\h_i\leq\h}\colon \cM_{\{h_0,\dots,h_m\}}\longrightarrow \cM_{\{h_0,\dots,\widehat{h}_i,\dots,h_m\}}
        \]
        is an isomorphism, where $\h_i\coloneqq\{h_0,\dots,\widehat{h}_i,\dots,h_m\}$.
    \end{enumerate}
    Then $\cM$ decomposes as a direct sum of interval persistence modules.
\end{theorem}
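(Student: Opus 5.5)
The plan is to peel off copies of the full interval with Lemma~\ref{Lem:DecompositionMinimalInjectives}, show that what remains vanishes on every index containing $0$ except $\{0\}$ itself, and then induct on $N$.

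\textbf{Step 1: transition maps on a fixed-minimum block are isomorphisms.} For $a\in[N]$, let $Q_a\coloneqq\{\h\in\cP^*_N : \min\h=a,\ |\h|\geq 2\}$. Condition (2) lets us delete any element of $\h$ other than its minimum $h_0$, as long as at least three elements remain. Hence for every $\h\in Q_a$ and every $b\in\h\smallsetminus\{a\}$ there is a chain $\h\supseteq\cdots\supseteq\{a,b\}$ all of whose transition maps are isomorphisms. Consequently every transition map of $\cM$ between two elements of $Q_a$ is an isomorphism. To see this, write an inclusion $\h\subseteq\h'$ in $Q_a$ as a composite of single deletions of non-minimal elements, and note that each such deletion leaves at least two elements. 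In particular, for $a=0$ we get $[N]\in Q_0$ and $\cM_{\h\leq[N]}$ is an isomorphism for every $\h$ with $0\in\h$ and $|\h|\geq 2$.

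\textbf{Step 2: split off the full interval.} The index $[N]$ is the minimum of $\cP^{*,\op}_N$, and by condition (1) all transition maps are injective. Lemma~\ref{Lem:DecompositionMinimalInjectives} therefore gives
\[
\cM\simeq\Bbbk[\cP^{*,\op}_N]^{n_0}\oplus\cM',\qquad n_0=\dim\cM_{[N]},\qquad \cM'_{[N]}=0.
\]
Since $\cM=\cM''\oplus\cM'$ as modules, each transition map of $\cM'$ is the restriction of one of $\cM$ to a summand. So the maps of $\cM'$ remain injective, and $\cM'$ again satisfies (1) and (2): a direct summand of an isomorphism is an isomorphism. Step 1 applied to $\cM'$ now gives $\cM'_\h\simeq\cM'_{[N]}=0$ for every $\h\ni 0$ with $|\h|\geq 2$.

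\textbf{Step 3: split off $\{0\}$ and induct on $N$.} The only comparable elements of $\{0\}$ in $\cP^*_N$ are supersets containing $0$, and all of these carry the zero space in $\cM'$. Hence $\cM'$ splits as
\[
\cM'\simeq\Bbbk[\{\{0\}\}]^{\dim\cM'_{\{0\}}}\oplus\cM'|_{\cP^*(\{1,\dots,N\})}.
\]
The second summand is extended by zero. It is a persistence module over the opposite of the poset of non-empty subsets of $\{1,\dots,N\}\cong[N-1]$, and it inherits hypotheses (1) and (2). By induction on $N$ it decomposes into interval modules; the base case $N=0$ is a one-point poset and is trivial.

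\textbf{Step 4: the intervals are intervals of $\cP^{*,\op}_N$.} It remains to check that the intervals obtained in Step 3 are intervals of the whole poset. The subposet $\cP^*(\{1,\dots,N\})$ is a down-set of $\cP^*_N$, hence convex. A subset that is convex and connected inside a convex subposet is convex and connected in the ambient poset. Both $\{\{0\}\}$ and $\cP^{*,\op}_N$ are clearly intervals.

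I expect the main obstacle to be Step 2: one must verify carefully that the complement $\cM'$ produced by Lemma~\ref{Lem:DecompositionMinimalInjectives} is a genuine submodule for which both hypotheses persist, so that the vanishing argument and the induction apply to it. The index bookkeeping in Step 1, namely that single deletions never reduce a set below two elements, also needs care.
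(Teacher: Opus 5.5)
Your proposal is essentially the paper's proof (split off $\Bbbk[\cP^{*,\op}_N]^{n_0}$ with Lemma~\ref{Lem:DecompositionMinimalInjectives}, use condition~(2) to confine the remainder to $\{\{0\}\}\sqcup\cP^*(\{1,\dots,N\})$, split off the copies of $\Bbbk[\{\{0\}\}]$, recurse), and your Steps~1, 2 and~4 fill in details the paper leaves implicit. One caveat, which applies equally to the paper: that lemma fails for general posets under injectivity alone, so your Step~2 really needs condition~(2). For a counterexample on $\cP^{*,\op}_2$, take $\cM_{[2]}=\Bbbk$ and all other spaces $\Bbbk^2$, let each $\cM_{\h\leq[2]}$ with $|\h|=2$ be the first-coordinate inclusion, and let every map from a two-element index to a singleton be the identity except $\cM_{\{1\}\leq\{0,1\}}$, which sends $e_2$ to $e_1+e_2$; chasing complements of $\img\cM_{\bullet\leq[2]}$ around the hexagon $\{0,1\},\{0\},\{0,2\},\{2\},\{1,2\},\{1\}$ then gives a contradiction. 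Under condition~(2) you can build the complement directly: choose complements $K_a$ of $\img\cM_{T_a\leq[N]}$ in $\cM_{T_a}$, with $T_a\coloneqq\{a,\dots,N\}$ and $\cM_{T_a\leq T_{a-1}}(K_{a-1})\subseteq K_a$, transport $K_a$ to all of $Q_a$ along the isomorphisms of your Step~1, and at each singleton $\{b\}$ extend $\cM_{\{b\}\leq T_c}(K_c)$, with $c=\min(b,N-1)$, to a complement; this works because that subspace contains every incoming image and meets $\img\cM_{\{b\}\leq[N]}$ trivially by injectivity.
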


\begin{proof}
   By Lemma~\ref{Lem:DecompositionMinimalInjectives}, the persistence module $\cM$ decomposes as
    \[
    \cM\simeq \Bbbk[\cP^{*,\op}_N]\,\oplus\,\stackrel{n_0}\cdots\,\oplus\,\Bbbk[\cP^{*,\op}_N]\,\oplus\,\cM'\,,
    \]
    where, following the notation of the lemma, $n_0\coloneqq\dim \cM_{[N]}$ and $\cM'$ is a submodule of $\cM$.
    
    Given that the maps $\cM_{\h_i\leq\h}$ are isomorphisms for every $\h$ and each $1 \leq i \leq m$, the support of $\cM'$ is contained in $\cP^*(\{1,\dots,N\})\cup\{0\}$. Since this union is disjoint, we can write
    \[
\cM'={\cM'}_{|_{\{0\}}}\,\oplus\,{\cM'}_{|_{\cP^*(\{1,\dots,N\})}}\,.
\]
On the one hand, letting $n_0'\coloneqq \dim \cM'_{\{0\}}=\dim \cM_{\{0\}}-\dim \cM_{[N]}$, we have
\[
{\cM'}_{|_{\{0\}}}\simeq \Bbbk[\{0\}]\,\oplus\,\stackrel{n_0'}\cdots\,\oplus\,\Bbbk[\{0\}]\, .
\]
On the other hand, the persistence module $\cM'_{|_{\cP^*(\{1,\dots,N\})}}$ satisfies the hypotheses of Lemma~\ref{Lem:DecompositionMinimalInjectives}, and therefore decomposes as
\[
{\cM'}_{|_{\cP^*(\{1,\dots,N\})}}\simeq \Bbbk[\cP^*(\{1,\dots,N\})]\,\oplus\,\stackrel{n_1}\cdots\,\oplus\, \Bbbk[\cP^*(\{1,\dots,N\})]\,\oplus\,\cM'' \, ,
\]
where the number of summands is given by
\[
n_1=\dim \cM'_{\{1,\dots,N\}}=\dim \cM_{\{1,\dots,N\}}-\dim \cM_{[N]}\, .
\]
By the same argument as before, $\cM''$ has its support contained in $\cP^*(\{2,\dots,N\})\cup \{1\}$. Proceeding recursively in this manner yields the decomposition.
\end{proof}
\begin{corollary}\label{Cor:DecompositionN_Explicit}
    Under the hypotheses of Theorem~\ref{Thm:DecompositionN}, the decomposition of $\cM$ is explicitly given by:
    \[
        \cM\simeq \bigoplus_{j=0}^N  \Bbbk[\cP^*(\{j,j+1,\dots,N\})]^{n_j}\oplus \bigoplus_{j=0}^{N-1}\Bbbk[\{j\}]^{n_j'}\,,
    \]
    where the multiplicities are:
    \begin{itemize}
        \item $n_0=\dim\cM_{[N]}$,
        \item $n_j=\dim \cM_{\{j,j+1,\dots,N\}}-\dim\cM_{\{j-1,j,\dots,N\}}$ for all $1\leq j\leq N$,
        \item $n_j'=\dim \cM_{\{j\}}-\dim\cM_{\{j,j+1,\dots,N\}}$ for all $0\leq j\leq N-1$.
    \end{itemize}
\end{corollary}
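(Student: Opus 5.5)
The plan is to make the recursion in the proof of Theorem~\ref{Thm:DecompositionN} explicit by induction on $j$, tracking the dimension of the residual module at each stage. Set $\cM^{(0)}\coloneqq\cM$, and at stage $j$ let $\cM^{(j)}$ denote the submodule supported in $\cP^*(\{j,\dots,N\})\cup\{j-1\}$ that the proof produces. The induction hypothesis is that $\cM^{(j)}$ restricted to $\cP^*(\{j,\dots,N\})$ still has injective transition maps. It also has isomorphisms $\cM^{(j)}_{\h_i\leq\h}$ for $\h\subseteq\{j,\dots,N\}$, $m\geq2$, $1\leq i\leq m$. Both properties are inherited because $\cM^{(j)}$ is a direct summand of $\cM$, and a summand of an injective or bijective map (restricted to complementary invariant subspaces) is again injective or bijective.

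At stage $j$, I apply Lemma~\ref{Lem:DecompositionMinimalInjectives} to $\cM^{(j)}|_{\cP^*(\{j,\dots,N\})}$, whose minimum in the opposite order is $\{j,\dots,N\}$. This splits off $\Bbbk[\cP^*(\{j,\dots,N\})]^{n_j}$ with $n_j=\dim\cM^{(j)}_{\{j,\dots,N\}}$, leaving a complement $\cC$. Next, for $\h\subseteq\{j,\dots,N\}$ with $|\h|\geq 2$ and $h_0=\min\h$, I claim $\cC_\h\neq0$ only if $h_0=j$ is removed. The argument is the one in the theorem. If $|\h|\geq3$, the isomorphisms removing non-minimal elements chain $\cM^{(j)}_{\{j,\dots,N\}}$ isomorphically onto $\cM^{(j)}_{\{h_0,\dots\}}$-type indices, and I need to check this reaches every $\h$ with $h_0=j$. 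For two-element sets $\{j,k\}$ one removes the elements of $\{j,\dots,N\}\smallsetminus\{j,k\}$ one at a time, always keeping $j$, so every step is an allowed isomorphism. Hence the image of $\{j,\dots,N\}$ fills $\cM^{(j)}_\h$ whenever $j\in\h$, and $\cC$ is supported on $\cP^*(\{j+1,\dots,N\})\cup\{j\}$. The key check here is that only the minimum $h_0$ is never removable; the condition $m\geq2$ ensures that two-element targets are still reached.

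Since $\{j\}$ is incomparable to every subset of $\{j+1,\dots,N\}$, the complement splits as $\cC=\cC|_{\{j\}}\oplus\cM^{(j+1)}$. Here $\cC|_{\{j\}}\simeq\Bbbk[\{j\}]^{n_j'}$ with $n_j'=\dim\cM^{(j)}_{\{j\}}-n_j$. The remaining work is identifying these multiplicities with dimensions of $\cM$ itself.

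By construction $\cM_\h=\bigoplus_{i\leq j}\Bbbk[\cP^*(\{i,\dots,N\})]^{n_i}_\h\oplus(\text{rest})$. So for $\h\subseteq\{j,\dots,N\}$ with $j\in\h$, only the summands with $i\leq j$ are nonzero at $\h$ (the $\{i\}$-summands with $i<j$ vanish there). This gives $\dim\cM^{(j)}_\h=\dim\cM_\h-\sum_{i<j}n_i$. In particular $\sum_{i\leq j}n_i=\dim\cM_{\{j,\dots,N\}}$, which telescopes to $n_0=\dim\cM_{[N]}$ and $n_j=\dim\cM_{\{j,\dots,N\}}-\dim\cM_{\{j-1,\dots,N\}}$. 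At $\h=\{j\}$ the same count yields $n_j'=\dim\cM_{\{j\}}-\sum_{i\leq j}n_i=\dim\cM_{\{j\}}-\dim\cM_{\{j,\dots,N\}}$.

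The last stage $j=N$ leaves only the poset $\{N\}$, which is absorbed into $\Bbbk[\cP^*(\{N\})]^{n_N}$ with no $n_N'$ term. Uniqueness of the multiplicities then follows from Krull--Remak--Schmidt~\cite{Bakke_Crawley20}, since interval modules have local endomorphism rings. The main obstacle is the support claim in the second paragraph, namely the reachability of every $\h\ni j$ by allowed isomorphisms. The rest is bookkeeping.
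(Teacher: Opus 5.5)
Your proposal is correct and follows the paper's route. The paper gets the corollary directly from the recursion in the proof of Theorem~\ref{Thm:DecompositionN}; you make that recursion explicit (injectivity and the isomorphism conditions pass to the summands $\cM^{(j)}$, non-minimal removals reach every $\h\ni j$ with $|\h|\geq 2$, and $\{j\}$ splits off) and then carry out the telescoping count $\sum_{i\leq j}n_i=\dim\cM_{\{j,\dots,N\}}$ that the paper leaves implicit.

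The one ingredient you take on trust, as the paper does, is Lemma~\ref{Lem:DecompositionMinimalInjectives}: its step choosing compatible complements uses more than injectivity and can fail on a general poset with a minimum. It is nonetheless valid in your application, because the isomorphism conditions make $\cM^{(j)}$ on $\cP^*(\{j,\dots,N\})$ a pullback of an injective representation of a tree-shaped poset rooted at $\{j,\dots,N\}$, where the root-generated submodule visibly splits off.
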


\begin{example}
    Consider any persistence module $\cM\colon \cP^{*,\op}_{3}\to \mathbf{vect}_\mathbb{R}$ having the structure depicted in Figure~\ref{Fig:DecompositionNExample}, where all transition maps are injective and the blue arrows denote isomorphisms. Then, by Corollary~\ref{Cor:DecompositionN_Explicit}, $\cM$ decomposes as:
    \[
    \cM\simeq \mathbb{R}[\cP_{3}^{*,\op}] \oplus \mathbb{R}[\cP^*(\{1,2,3\})]^2\oplus \mathbb{R}[\cP^*(\{2,3\})]\oplus \mathbb{R}[\{0\}]\oplus \mathbb{R}[\{1\}]^2\oplus \mathbb{R}[\{2\}]^3\oplus\mathbb{R}[\{3\}]\,.
    \]
    This interval decomposition is illustrated in Figure~\ref{Fig:DecompositionNBarcode}. It is worth noting that for this particular class of modules, the decomposition does not depend on the specific choice of the linear maps, but solely on the dimensions of the vector spaces at each node.
\end{example}
\begin{figure}[htb!]
    \centering
    \begin{subfigure}{0.45\linewidth}
        \centering
        \includegraphics[width=0.9\linewidth]{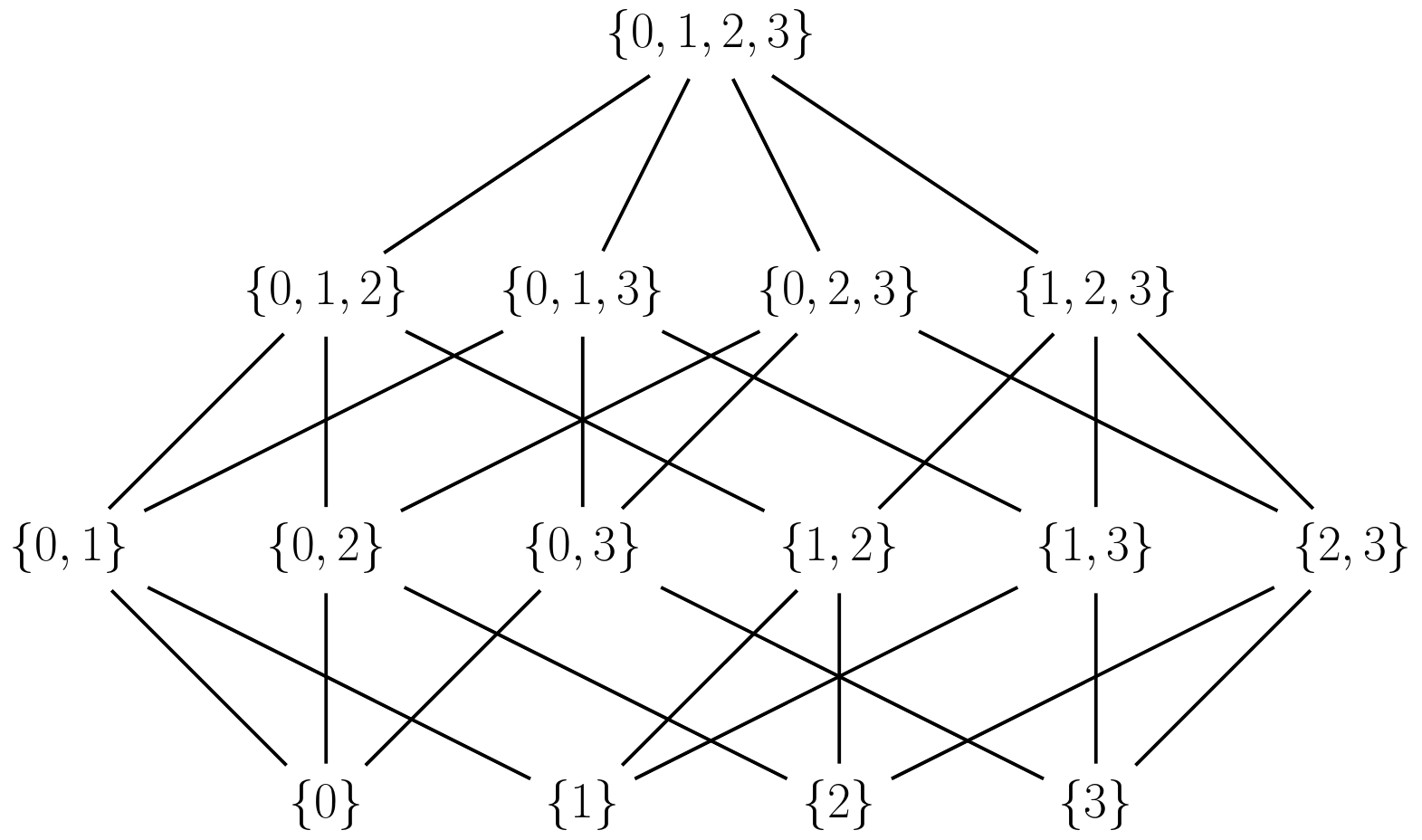}
        \caption{$\cP^{*}_3$}
    \end{subfigure}
    \hspace{2ex}
    \begin{subfigure}{0.45\linewidth}
        \centering
        \includegraphics[width=0.9\linewidth]{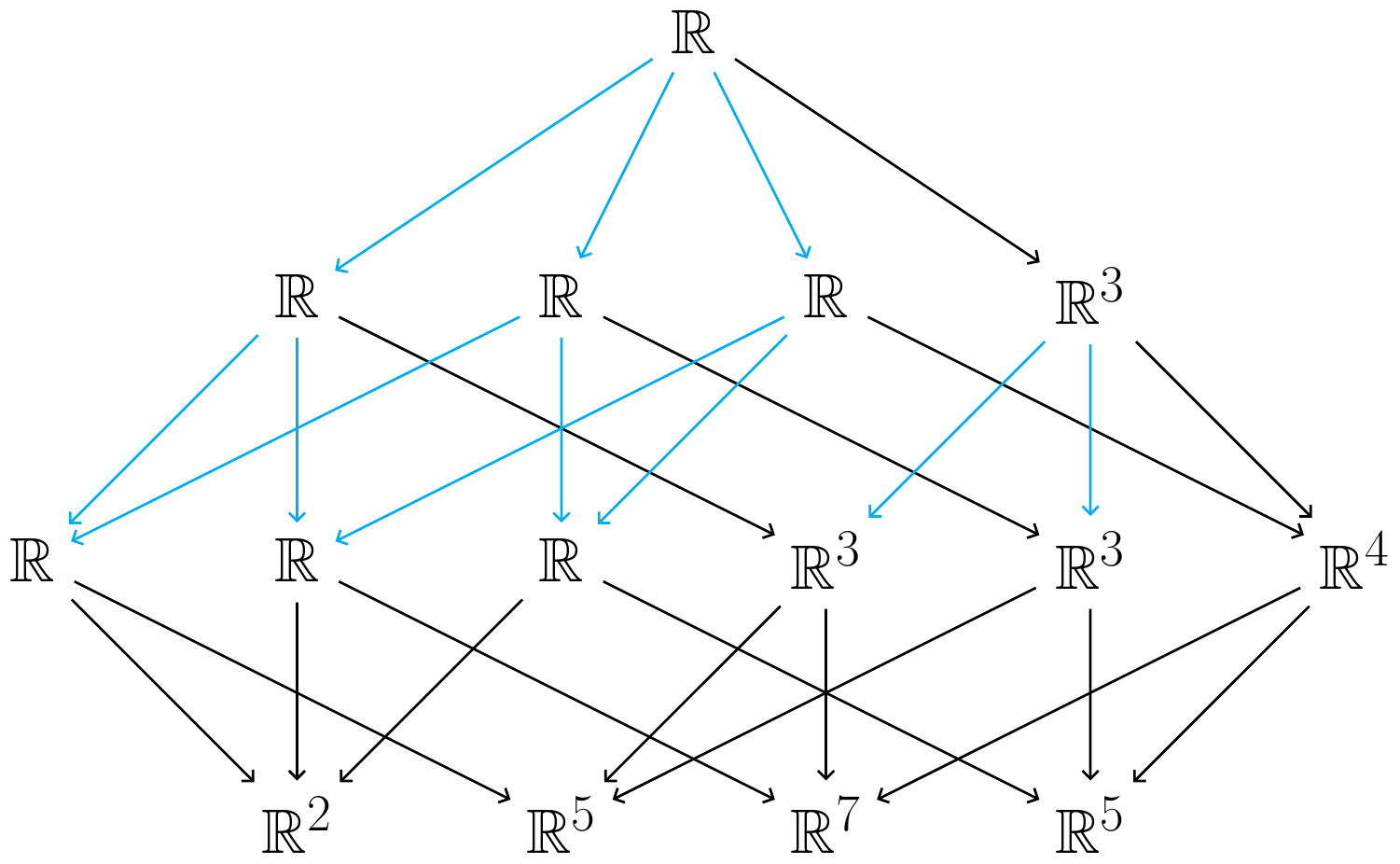}
        \caption{$\cM$}
    \end{subfigure}
    \caption{(A) Hasse diagram of the underlying poset $\cP^{*}_3$. (B) A persistence module over $\cP^{*,\op}_{3}$, where blue arrows indicate isomorphisms.}
    \label{Fig:DecompositionNExample}
\end{figure}

\begin{figure}[htb!]
    \centering
    \includegraphics[width=0.9\linewidth]{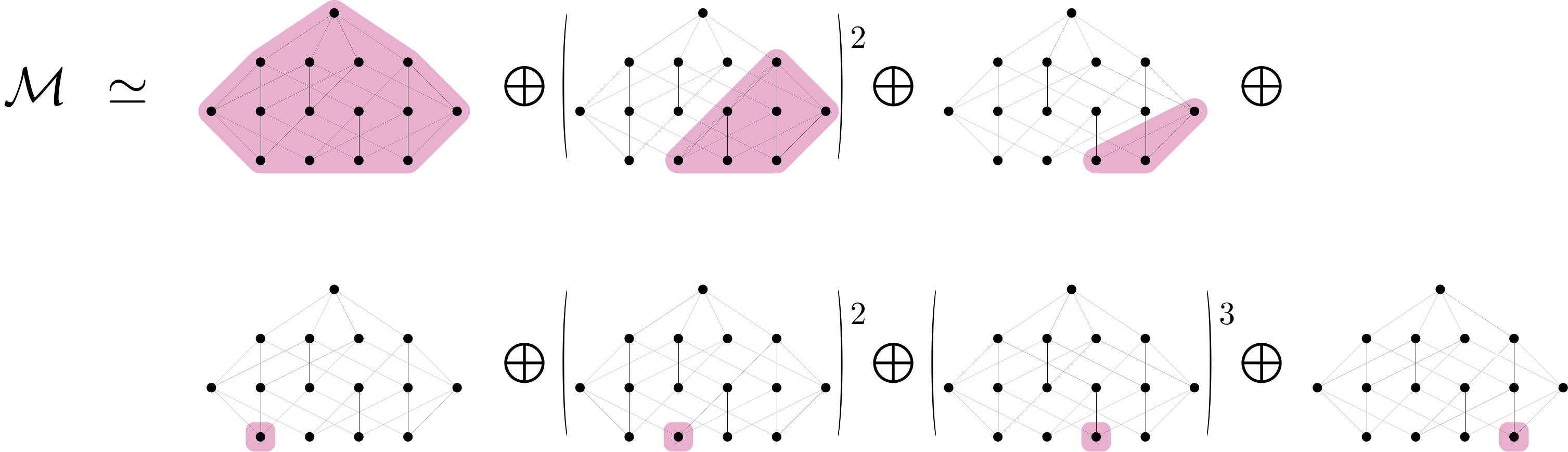}
    \caption{Interval decomposition of the persistence module in Figure~\ref{Fig:DecompositionNExample}.}
    \label{Fig:DecompositionNBarcode}
\end{figure}

\section{Robustness for cellular sheaves: thickness and cohesion}\label{Sec:GrosorHaces}

\subsection{Thickness, sheaf cohomology and persistence}\label{ss:CohHaz}\quad 

Building on sheaf persistence of topological type, in this section we adapt the analysis of thickness to the framework of cellular sheaves. This approach allows us to quantify the robustness of local data as a function of the thickness of the underlying complex. To this end, we evaluate the response of sheaf cohomology when restricted to the corresponding coskeleta (see Section~\ref{s:pre1}).
\begin{definition}
Let $\cF\colon P_X\to \vect$ be a cellular sheaf on a simplicial complex $X$, and let $n,q\in \bbN$. We define the $n$-th cohomology space of $q$-thickness of $\cF$, denoted $H^{n,q}(X;\cF)$, as the $n$-th cohomology space of the restriction of the cellular sheaf $\cF$ to the $q$-coskeleton of $X$; that is,
    \[
    H^{n,q}(X;\cF)\coloneqq H^n(X^q;\cF_{|_{X^q}})\, .
    \]
\end{definition}
As with thick Betti numbers, taking $q=0$ in the preceding definition recovers the cohomology spaces of the sheaf: $H^{n,0}(X;\cF)=H^n(X;\cF)$. Furthermore, if $\cF$ is the constant sheaf $\Bbbk$, we recover the thick Betti numbers of \cite{Pablo_Dani_Dario_25} (see Section~\ref{s:pre1}):
\[
\beta^{n,q}(X;\Bbbk)=\dim H^{n,q}(X;\Bbbk)\,.
\]

A complete analysis of the cohomology spaces of thickness requires a dynamic perspective. Starting from the cofiltration by coskeleta
\[
X=X^0\supseteq X^1\supseteq X^2\supseteq \cdots \supseteq X^{\dim X}\,,
\]
and restricting the sheaf $\cF$ on $X$ to the successive subcomplexes, we obtain the following persistence module of topological type:
\[
H^{n,0}(X;\cF)\to H^{n,1}(X;\cF)\to H^{n,2}(X;\cF)\to \dots\to H^{n,\dim X}(X;\cF)\to 0\, .
\]
In this way, the barcode of the module $H^{n,\bullet}(X;\cF)$ offers a description of the interplay between the underlying topology of the simplicial complex and the algebraic structure of the sheaf.

\begin{example}
Consider the cellular sheaf $\cF$ on the simplicial complex $X$ represented in Figure~\ref{Fig:Example5}. The sheaf assigns the stalk $\R^2$ to each simplex, with the restriction morphisms being the identity on almost all incidences. The only exceptions are the linear maps associated with the inclusions of $v_1$ and $v_2$ into the edge $(v_1,v_2)$, which are defined respectively by the matrices: $\begin{pmatrix}
    1 & 0\\
    0 & 0
\end{pmatrix}$ and $\begin{pmatrix}
    1& 0 \\ 
    1 & 1 
\end{pmatrix}$. 
\begin{figure}[htb!]
    \centering
    \begin{subfigure}[t]{0.45\textwidth}
        \centering
        \includegraphics[width=0.9\linewidth]{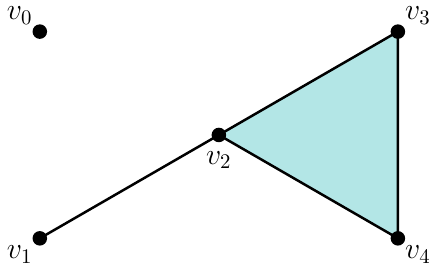}
        \caption{ }
    \end{subfigure}
    \hfill
    \begin{subfigure}[t]{0.45\textwidth}
        \centering
        \includegraphics[width=0.92\linewidth]{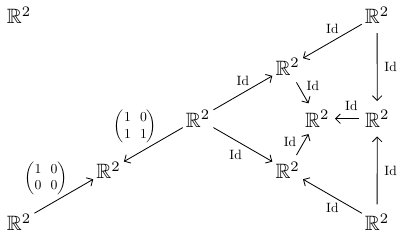}
        \caption{ }
    \end{subfigure}    
    \caption{Simplicial complex $X$ (A) and cellular sheaf $\cF$ (B). For clarity, the restriction morphisms associated with the inclusions of vertices into the triangle have been omitted, since these are defined by composition.}
    \label{Fig:Example5}
\end{figure}

Let us study the evolution of cohomology classes in degree $0$ along the cofiltration by coskeleta. To this end, let $\{x_i,y_i\}$ be the natural basis of the stalk of $\cF$ at $v_i$, $\cF(v_i)=\R^2$.

First, the space $H^0(X;\cF)$ consists of the vectors of the form $\big((\alpha_i,\beta_i)\big)_{0\leq i\leq 4}\in \prod_{i=0}^4 \cF(v_i)$ satisfying the following compatibility conditions on the edges:
\[
\begin{gathered}
    \begin{cases}
    \cF_{v_1\face e_{12}}(\alpha_1,\beta_1)=\cF_{v_2\face e_{12}}(\alpha_2,\beta_2)\\[1.5ex]
    \cF_{v_2\face e_{23}}(\alpha_2,\beta_2)=\cF_{v_3\face e_{23}}(\alpha_3,\beta_3)\\[1.5ex]
    \cF_{v_2\face e_{24}}(\alpha_2,\beta_2)=\cF_{v_4\face e_{23}}(\alpha_4,\beta_4)\\[1.5ex]
    \cF_{v_3\face e_{34}}(\alpha_3,\beta_3)=\cF_{v_4\face e_{34}}(\alpha_4,\beta_4)
\end{cases}\iff 
\begin{cases}
   (\alpha_1,0)=(\alpha_2,\alpha_2+\beta_2)\\[1.5ex]
    (\alpha_2,\beta_2)=(\alpha_3,\beta_3)\\[1.5ex]
   (\alpha_2,\beta_2)=(\alpha_4,\beta_4)\\[1.5ex]
    (\alpha_3,\beta_3)=(\alpha_4,\beta_4)
\end{cases}
\end{gathered}
\]
Therefore: $H^0(X;\cF)=\{(\alpha_i,\beta_i)\in \prod_{i=0}^4 \cF(v_i) : \alpha_1=\alpha_2=\alpha_3=\alpha_4=-\beta_2=-\beta_3=-\beta_4\}$. Repeating these calculations for the restrictions of the sheaf $\cF$ to the $1$-coskeleton and to the $2$-coskeleton, we obtain:
\begin{align*}
    H^{0,0}(X;\cF)& = \R^4=\langle x_0,\,y_0,\,y_1,\,x_1+x_2-y_2+x_3-y_3+x_4-y_4\rangle\, ,\\[2ex]
    H^{0,1}(X;\cF)&=\R^2= \langle y_1,\, x_1+x_2-y_2+x_3-y_3+x_4-y_4\rangle\, ,\\[2ex]
    H^{0,2}(X;\cF)& =\R^2=\langle x_2+x_3+x_4,\, y_2+y_3+y_4\rangle\, .
\end{align*}

The evolution of the basis vectors of $H^{0,0}(X;\cF)$ is compactly summarized in the barcode of $H^{0,\bullet}(X;\cF)$ (Figure~\ref{Fig:Example5CodigoBarras}). The top two bars correspond to the evolution of the basis vectors of the stalk of $\cF$ at the isolated vertex, $v_0$, which is removed when taking the $1$-coskeleton of $X$. The longest interval traces the persistence of the vector $x_1+x_2-y_2+x_3-y_3+x_4-y_4$. Finally, the removal of the edge $(v_1,v_2)$ in the passage to the $2$-coskeleton annihilates $y_1$, while also lifting the restriction $\alpha_i = -\beta_i$ for each $i=2,3,4$, causing the birth of a new cohomology class at stage~$2$.
\end{example}
\begin{figure}[htb!]
        \centering
        \includegraphics[width=0.65\linewidth]{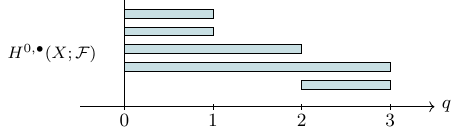}    
    \caption{Barcode of the persistence module $H^{0,\bullet}(X;\cF)$.}
    \label{Fig:Example5CodigoBarras}
\end{figure}

As the previous example illustrates, the barcode associated with thickness cohomology provides two complementary perspectives: first, it identifies the dimension of the simplices supporting nontrivial cohomology classes; and second, it indicates the dimension of the facets that obstruct the existence of additional cohomology classes.

\subsection{Cohesion and sheaf cohomology}\label{Subsec:CohesionHaces}\quad 

In this section we formalize the concept of cohesion for cellular sheaves, a construction that allows us to analyze how local data are algebraically related through higher-order adjacencies. Since removing simplices of specific dimensions generally breaks the simplicial structure, as in Section~\ref{s:pre1}, we will approach the study by means of sheaf theory over finite topological spaces.

Let $X$ be a simplicial complex of dimension $N$, let $\cF$ be a cellular sheaf on $X$, and let $\h\in \cP^*_N$.
\begin{definition}
We define the $n$-th cohomology space of $\h$-cohesion of $\cF$, denoted by $H^{n,\h}(X;\cF)$, as the $n$-th cohomology space of the restriction of the sheaf $\widehat{\cF}$ over $P_{X}$ to the $\h$-face poset of $X$; that is,
    \[
    H^{n,\h}(X;\cF)\coloneqq H^n(P_X^\h;\widehat{\cF}_{|_{P_X^{\h}}})\, .
    \]
\end{definition}
Notice that by taking $\h=[N]$ in the preceding definition we obtain the cohomology of the cellular sheaf (Theorem~\ref{Thm:EquivalenciaCohomologiacelularyhaces}):
\[
H^{n,[N]}(X;\cF)\simeq H^n(X;\cF)\, .
\]
Moreover, considering $\cF=\Bbbk$, we recover the cohesion Betti numbers of \cite{Pablo_Dani_Dario_25} (Section~\ref{s:pre1}):
\begin{proposition}
    The sheaf cohomology of the constant sheaf $\Bbbk$ over the $\h$-face poset is isomorphic to the simplicial cohomology with coefficients in $\Bbbk$ of the order complex of $P_X^{\h}$:
    \[
    H^n(P_X^\h;\Bbbk) \simeq H^n(\mathcal{K}(P_X^\h);\Bbbk)\,.
    \]
    As a consequence, $\dim H^{n,\h}(X;\Bbbk) = \beta^{n,\h}(X;\Bbbk)$.
\end{proposition}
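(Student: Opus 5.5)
The plan is to use the classical fact that sheaf cohomology of the constant sheaf on a finite $T_0$ space agrees with the simplicial cohomology of its order complex. I will prove this directly at the cochain level via the Roos standard resolution recalled in Section~\ref{Sec:HacesPosets}. For $P=P_X^\h$ and the constant functor $\Bbbk$ (all restriction maps equal to the identity), the complex computing $H^n(P;\widehat{\Bbbk})$ has
\[
C^n(P;\widehat{\Bbbk})=\prod_{p_0<\dots<p_n}\Bbbk .
\]
This is exactly the space of functions on the $n$-simplices of $\mathcal{K}(P)$, since chains in $P$ are the simplices of the order complex, each carrying a canonical vertex ordering by the poset order.

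The key step is comparing differentials. In Equation~\eqref{Eq:CoboundaryStandardResolution}, the face maps deleting $p_j$ for $j\le n$ appear with sign $(-1)^j$. The last term $(-1)^{n+1}\cF_{p_n\le p_{n+1}}(x_{p_0<\dots<p_n})$ becomes $(-1)^{n+1}x_{p_0<\dots<p_n}$ when $\cF$ is constant, and this is the value of $x$ on the face obtained by deleting $p_{n+1}$. Hence $\delta^n$ is precisely the simplicial coboundary of $\mathcal{K}(P)$ with respect to the vertex ordering induced by $P$. This is a linear extension, so it is a legitimate total ordering for the ordered-simplex cochain complex of Section~\ref{Sec:CellularSheaves}. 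The two cochain complexes therefore coincide, and so do their cohomologies.

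I must also check that the restriction $\widehat{\Bbbk}_{|_{P_X^\h}}$ is the sheaf associated to the constant functor on $P_X^\h$. This holds because restricting (pulling back along the inclusion) a functor-sheaf on an Alexandrov space gives the sheaf of the restricted functor. Alternatively, one can note that the definition of $H^{n,\h}$ uses the restriction of the functor. Combining this with the definition~\eqref{Def:CohesiveBetti} gives $\dim H^{n,\h}(X;\Bbbk)=\beta^{n,\h}(X;\Bbbk)$.

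The main obstacle is this last bookkeeping point: making sure the sign convention of the Roos complex (with the restriction applied in the final slot) matches the simplicial convention, and that restricting the sheaf to the subspace $P_X^\h$ of $P_X$ yields the constant sheaf on that subspace. The sign issue is settled by the identification above. For the restriction, a stalk computation suffices: the stalk of $\widehat{\Bbbk}$ at $p$ is $\widehat{\Bbbk}(U_p)=\Bbbk$, because $U_p$ has minimum $p$, and inverse images preserve stalks. As a fallback, one could cite McCord's theorem together with the standard comparison $H^n(P;\Bbbk)\simeq H^n(\mathcal{K}(P);\Bbbk)$ from~\cite{Barmak11}, but the direct cochain identification is cleaner.
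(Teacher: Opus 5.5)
Your proposal is correct and follows the paper's proof, which consists exactly of your key step: for the constant sheaf, the Roos complex on $P_X^\h$ coincides with the simplicial cochain complex of $\mathcal{K}(P_X^\h)$ when each simplex is oriented by its chain order. Your additional checks make explicit what the paper leaves implicit: the last Roos term becomes the face deleting $p_{n+1}$, any linear extension of the order on $P_X^\h$ serves as the total vertex order, and $\widehat{\Bbbk}_{|_{P_X^\h}}$ is the sheaf of the restricted constant functor.
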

\begin{proof}
It follows from the fact that the cochain complex induced by the standard resolution of the constant sheaf $\Bbbk$ on $P_X^{\h}$ (Equation~\eqref{Eq:CoboundaryStandardResolution}) coincides with the simplicial cochain complex of $\mathcal{K}(P_X^\h)$ with coefficients in $\Bbbk$, provided the simplices are oriented according to the natural ordering of the chains.
\end{proof}

Recall that cohomology in degree $0$ corresponds to the global sections of the cellular sheaf, $H^0(X;\cF)=\Gamma(X;\cF)$, that is, to globally compatible distributions of information over the simplicial complex. In this new context, cohesion cohomology makes it possible to capture new forms of harmony in a network beyond pairwise compatibility. To formalize this idea, we begin by generalizing the notion of global section by introducing $(h_0,h_1)$-global sections.

\begin{definition}
    Let $0\leq h_0<h_1$. An $(h_0,h_1)$-global section of $\cF$ is a collection of vectors $\{x_\sigma\}_{\sigma\in S^{h_0}(X)}$, where $x_\sigma\in \cF(\sigma)$ for each $\sigma\in S^{h_0}(X)$, such that for each simplex $\tau$ with $\dim\tau\geq h_1$ and each pair of $h_0$-simplices $\sigma,\sigma'\face \tau$ we have $\cF_{\sigma\face\tau}(x_{\sigma})=\cF_{\sigma'\face\tau}(x_{\sigma'})$. The set of $(h_0,h_1)$-global sections of $\cF$ forms a vector space, which we denote by $\Gamma^{h_0,h_1}(X;\cF)$.
\end{definition}

In particular, the space of $(0,1)$-global sections coincides with the space of global sections of the sheaf, $\Gamma^{0,1}(X;\cF)=\Gamma(X;\cF)=H^0(X;\cF)$.

The $(h_0,h_1)$-global sections extend the idea of agreement in a network captured by global sections to a higher-order compatibility. In this sense, each $(h_0,h_1)$-global section corresponds to a distribution of information over the $h_0$-simplices that is shared congruently in simplices of dimension greater than or equal to $h_1$, without requiring agreement in smaller communities.

\begin{example}\label{Ex:Consenso0hsecciones}
Consider the cellular sheaf $\cF$ in Figure~\ref{Fig:02GlobalSectionA}, whose restriction morphisms are the identity map for the inclusions of vertices into edges, and the linear map
    \[
    \begin{array}{ccc}
        \R^2 & \longrightarrow & \R \\
         (x,y) & \longmapsto & x+y
    \end{array}\, 
    \]
for the inclusions of vertices and edges into the triangle. Denote by $\{v_0,v_1,v_2\}$ the vertices of the triangle and by $\{x_i,y_i\}$ the natural basis of $\cF(v_i)=\R^2$. The space of global sections of the sheaf is $\Gamma(X;\cF)=\R^2$, with basis the pair of vectors \{$x_0+x_1+x_2, y_0+y_1+y_2$\}.
 
On the other hand, the space of $(0,2)$-global sections of the cellular sheaf is
\begin{align*}
    \Gamma^{0,2}(X;\cF)&=\{(\alpha_i,\beta_i)\in \prod_{i=0}^2 \R^2 : \alpha_0+\beta_0=\alpha_1+\beta_1=\alpha_2+\beta_2\}\\[2ex]
    &= \langle x_0+y_1+y_2,\,y_0+y_1+y_2,\,x_1-y_1,\,x_2-y_2\rangle=  \R^4\, .
\end{align*}
Figure~\ref{Fig:02GlobalSectionB} shows an example of a $(0,2)$-global section of the sheaf that is not a global section: the images of the vectors associated with the vertices coincide over the $2$-simplex, but they do not coincide over each edge.
\end{example}

\begin{figure}[htb!]
    \centering
    \begin{subfigure}[b]{0.47\textwidth}
        \centering
        \includegraphics[width=0.83\textwidth]{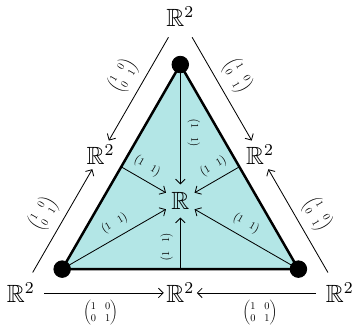}
        \caption{} 
        \label{Fig:02GlobalSectionA}
    \end{subfigure}
    \hfill 
    \begin{subfigure}[b]{0.47\textwidth}
        \centering
        \includegraphics[width=0.87\textwidth]{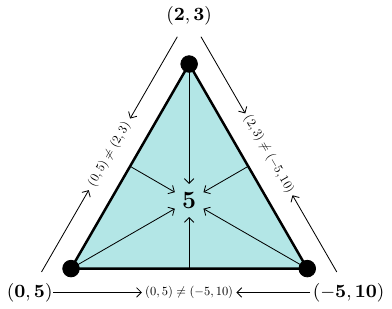}
        \vspace{2.5ex}
        \caption{} 
        \label{Fig:02GlobalSectionB}
    \end{subfigure}
    \caption{Cellular sheaf on a $2$-simplex (A) and $(0,2)$-global section (B).}\label{Fig:02GlobalSection}
\end{figure}

\begin{proposition}\label{Prop:h0h1sections}
A vector $x\in C^{h_0}(X;\cF)$ is an $(h_0,h_1)$-global section if and only if for each $h_1$-simplex $\tau\in X$ and each pair of $h_0$-simplices $\sigma,\sigma'\face \tau$ we have $\cF_{\sigma\face\tau}(x_{\sigma})=\cF_{\sigma'\face\tau}(x_{\sigma'})$.
\end{proposition}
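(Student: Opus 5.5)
The forward implication is immediate from the definition: an $(h_0,h_1)$-global section satisfies the compatibility condition on every simplex $\tau$ with $\dim\tau\geq h_1$, in particular on every $h_1$-simplex. The content of the statement is the converse. I would show that compatibility over all $h_1$-simplices already forces compatibility over every simplex $\tau$ with $\dim\tau\geq h_1$. The tools are functoriality of $\cF$ and the fact that any two $h_0$-faces of a higher simplex lie in a common $h_1$-face.

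So fix $\tau$ with $\dim\tau>h_1$ and two $h_0$-simplices $\sigma,\sigma'\face\tau$. Since $\sigma$ and $\sigma'$ each have $h_0+1$ vertices, the union of their vertex sets has at most $2h_0+2$ vertices. This may exceed $h_1+1$, so a single common $h_1$-face need not exist. I would therefore use a chain argument. Choose $h_0$-faces $\sigma=\sigma_0,\sigma_1,\dots,\sigma_r=\sigma'$ of $\tau$, where consecutive ones differ by exchanging a single vertex. Then $\sigma_{k}\cup\sigma_{k+1}$ spans an $(h_0+1)$-face of $\tau$, and since $h_0<h_1\leq\dim\tau$, this face can be enlarged to an $h_1$-face $\rho_k$ with $\sigma_k,\sigma_{k+1}\face\rho_k\face\tau$.

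The hypothesis gives $\cF_{\sigma_k\face\rho_k}(x_{\sigma_k})=\cF_{\sigma_{k+1}\face\rho_k}(x_{\sigma_{k+1}})$ for each $k$. Applying $\cF_{\rho_k\face\tau}$ and the composition rule $\cF_{\rho_k\face\tau}\circ\cF_{\sigma\face\rho_k}=\cF_{\sigma\face\tau}$, I get $\cF_{\sigma_k\face\tau}(x_{\sigma_k})=\cF_{\sigma_{k+1}\face\tau}(x_{\sigma_{k+1}})$. Chaining these equalities along $k=0,\dots,r-1$ yields $\cF_{\sigma\face\tau}(x_\sigma)=\cF_{\sigma'\face\tau}(x_{\sigma'})$, which is the required compatibility over $\tau$.

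The only point needing care is the combinatorial step: producing the chain of $h_0$-faces and, for each consecutive pair, a common $h_1$-face inside $\tau$. The chain exists because we can swap the vertices of $\sigma\smallsetminus\sigma'$ for those of $\sigma'\smallsetminus\sigma$ one at a time. The enlargement to $\rho_k$ uses only $h_0+1\leq h_1\leq\dim\tau$. Everything else is functoriality of the cellular sheaf.
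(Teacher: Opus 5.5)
Your proof is correct and is essentially the paper's argument: the paper likewise joins $\sigma$ and $\sigma'$ by a chain of $h_0$-faces of $\mu$, with consecutive members lying in a common $h_1$-face $\tau_i$ of $\mu$, and then applies $\cF_{\tau_i\face\mu}$ and functoriality; the only difference is that the paper just asserts the chain exists, whereas you construct it by single-vertex swaps. One small slip is the claim in your first paragraph that any two $h_0$-faces of a higher simplex lie in a common $h_1$-face, which is false in general (as you note yourself in the next paragraph), but your argument never relies on it.
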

\begin{proof}
The direct implication is immediate from the definition of $\Gamma^{h_0,h_1}(X;\cF)$. For the converse, assume that a vector $x\in C^{h_0}(X;\cF)$ satisfies the second condition in the statement, and let us show that $x$ is an $(h_0,h_1)$-global section. Let $\mu\in X$ be a simplex of dimension greater than or equal to $h_1$, and let $\sigma$ and $\sigma'$ be two $h_0$-faces of $\mu$. There exists a sequence $\{\sigma_0,\sigma_1,\dots,\sigma_n\}$ of $h_0$-faces of $\mu$ and a sequence $\{\tau_1,\dots,\tau_n\}$ of $h_1$-faces of $\mu$ such that
      \[
    \sigma=\sigma_0\face\tau_1 \trianglerighteqslant\sigma_1\face \tau_2\trianglerighteqslant\sigma_2\face \dots \face\tau_n\trianglerighteqslant\sigma_n=\sigma'\, .
    \]
    By hypothesis, $\cF_{\sigma_{i-1}\face\tau_i}(x_{\sigma_{i-1}})=\cF_{\sigma_{i}\face\tau_i}(x_{\sigma_i})$ for all $1\leq i\leq n$. Applying the morphism $\cF_{\tau_i\face\mu}$ to both sides of the equality, and by functoriality of the sheaf, we obtain:
    \[
    \cF_{\sigma_{i-1}\face\mu}(x_{\sigma_{i-1}})=\cF_{\sigma_{i}\face\mu}(x_{\sigma_{i}}) \,,\quad \text{for all  }1\leq i\leq n\, .
    \]
    Since $\sigma_0=\sigma$ and $\sigma_n=\sigma'$, we conclude that $\cF_{\sigma\face\mu}(x_{\sigma})=\cF_{\sigma'\face\mu}(x_{\sigma'})$, that is, $x\in \Gamma^{h_0,h_1}(X;\cF)$.
\end{proof}

\begin{proposition}\label{Prop:InclusionGlobalSections}
    For any $0\leq h_0<h_1< h_1'$, we have $\Gamma^{h_0,h_1}(X;\cF)\subseteq \Gamma^{h_0,h_1'}(X;\cF)$. Moreover, if each $h_1$-simplex $\tau\in X$ is a face of some $h_1'$-simplex $\mu\in X$ such that the restriction morphism $\cF_{\tau\face \mu}$ is injective, then
    \[
    \Gamma^{h_0,h_1}(X;\cF)=\Gamma^{h_0,h_1'}(X;\cF)\,.
    \]
\end{proposition}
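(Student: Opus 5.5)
The first assertion is immediate from the definitions, so the plan is to dispose of it directly and then spend the effort on the equality. For the inclusion, I will take $x\in\Gamma^{h_0,h_1}(X;\cF)$. By definition, $\cF_{\sigma\face\mu}(x_\sigma)=\cF_{\sigma'\face\mu}(x_{\sigma'})$ holds for every simplex $\mu$ with $\dim\mu\geq h_1$ and all $h_0$-faces $\sigma,\sigma'\face\mu$. Every simplex of dimension $\geq h_1'$ has dimension $\geq h_1$, so these conditions include all those defining $\Gamma^{h_0,h_1'}(X;\cF)$. Hence $x\in\Gamma^{h_0,h_1'}(X;\cF)$.

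For the reverse inclusion under the injectivity hypothesis, I will use the reduction of Proposition~\ref{Prop:h0h1sections}. It says it is enough to check compatibility on $h_1$-simplices only. So I will fix $x\in\Gamma^{h_0,h_1'}(X;\cF)$, an $h_1$-simplex $\tau$, and two $h_0$-faces $\sigma,\sigma'\face\tau$. By hypothesis there is an $h_1'$-simplex $\mu\trianglerighteqslant\tau$ with $\cF_{\tau\face\mu}$ injective. Since $\sigma,\sigma'\face\tau\face\mu$ and $\dim\mu=h_1'$, the defining condition of $\Gamma^{h_0,h_1'}$ gives $\cF_{\sigma\face\mu}(x_\sigma)=\cF_{\sigma'\face\mu}(x_{\sigma'})$.

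Functoriality lets me rewrite both sides as $\cF_{\tau\face\mu}$ applied to $\cF_{\sigma\face\tau}(x_\sigma)$ and to $\cF_{\sigma'\face\tau}(x_{\sigma'})$, respectively. Injectivity of $\cF_{\tau\face\mu}$ then yields $\cF_{\sigma\face\tau}(x_\sigma)=\cF_{\sigma'\face\tau}(x_{\sigma'})$. Proposition~\ref{Prop:h0h1sections} therefore gives $x\in\Gamma^{h_0,h_1}(X;\cF)$, and the two spaces are equal.

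I do not expect a real obstacle. The only point needing care is that the definition of $\Gamma^{h_0,h_1}$ quantifies over all simplices of dimension $\geq h_1$. Without Proposition~\ref{Prop:h0h1sections}, one would have to handle simplices of dimension strictly between $h_1$ and $h_1'$ separately; invoking that proposition is exactly what reduces the check to $h_1$-simplices.
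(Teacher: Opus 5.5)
Your proposal is correct and follows essentially the same argument as the paper: the inclusion is read off from the definition, and for the equality you reduce to $h_1$-simplices via Proposition~\ref{Prop:h0h1sections}, pass to an $h_1'$-simplex $\mu\trianglerighteqslant\tau$ with $\cF_{\tau\face\mu}$ injective, and cancel $\cF_{\tau\face\mu}$ using functoriality. The only cosmetic difference is that the paper phrases this last step as a proof by contradiction, whereas you argue directly.
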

\begin{proof}
         The inclusion $\Gamma^{h_0,h_1}(X;\cF)\subseteq \Gamma^{h_0,h_1'}(X;\cF)$ follows directly from the definition.
    
    As for the equality, let $x\in\Gamma^{h_0,h_1'}(X;\cF)$ and let us show that $x\in \Gamma^{h_0,h_1}(X;\cF)$. By Proposition~\ref{Prop:h0h1sections}, it suffices to prove that $\cF_{\sigma\face\tau}(x_\sigma)=\cF_{\sigma'\face\tau}(x_{\sigma'})$ for any $h_1$-simplex $\tau\in X$ and each pair of $h_0$-faces $\sigma,\sigma'\face \tau$.

    Suppose, for contradiction, that $\cF_{\sigma\face\tau}(x_\sigma)\neq\cF_{\sigma'\face\tau}(x_{\sigma'})$. By hypothesis, there exists an $h_1'$-simplex $\mu$ containing $\tau$ such that $\cF_{\tau\face \mu}$ is injective. Thus,
    \[
    \cF_{\tau\face \mu}\cF_{\sigma\face\tau}(x_\sigma)\neq\cF_{\tau\face \mu}\cF_{\sigma'\face\tau}(x_{\sigma'})\implies \cF_{\sigma\face\mu}(x_\sigma)\neq\cF_{\sigma'\face\mu}(x_{\sigma'})\, ,
    \]
    which contradicts $x\in \Gamma^{h_0,h_1'}(X;\cF)$.
\end{proof}

\begin{proposition}\label{Prop:CohomGlobalSections}
     For each $\h=\{h_0,h_1,\dots, h_m\}$, the $0$-th cohomology space of $\h$-cohesion of $\cF$ is isomorphic to the space of $(h_0,h_1)$-global sections of $\cF$:
    \[
    H^{0,\h}(X;\cF)\simeq \Gamma^{h_0,h_1}(X;\cF)\, .
    \]
\end{proposition}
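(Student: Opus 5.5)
We compute $H^{0,\h}(X;\cF)=H^0(P_X^\h;\widehat{\cF}_{|_{P_X^\h}})$ directly as global sections over the finite space $P_X^\h$. Degree-zero sheaf cohomology is the space of global sections, so by the construction of Section~\ref{Sec:HacesPosets}
\[
H^0(P_X^\h;\widehat{\cF}_{|_{P_X^\h}})\simeq \varprojlim_{\sigma\in P_X^\h}\cF(\sigma).
\]
Equivalently, it is the kernel of $\delta^0$ in the standard resolution~\eqref{Eq:CoboundaryStandardResolution}. This kernel consists of families $(x_\sigma)_{\sigma\in P^\h_X}$ with $x_\tau=\cF_{\sigma\face\tau}(x_\sigma)$ for all $\sigma<\tau$ in $P_X^\h$.

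The first step is to project such a family onto its components on $h_0$-simplices. This defines a map
\[
\Phi\colon \varprojlim_{P_X^\h}\cF\to C^{h_0}(X;\cF),\qquad (x_\sigma)\mapsto (x_\sigma)_{\sigma\in S^{h_0}(X)}.
\]
We check that the image of $\Phi$ lies in $\Gamma^{h_0,h_1}(X;\cF)$. Let $\tau$ have dimension $\ge h_1$ and let $\sigma,\sigma'\face\tau$ be $h_0$-faces. Since $\tau$ has dimension $\geq h_1>h_0$, it has an $h_1$-face $\tau'$ containing both $\sigma$ and $\sigma'$ whenever $h_0+1\le h_1$ allows it. Here I would instead argue directly via Proposition~\ref{Prop:h0h1sections}: take an $h_1$-simplex $\tau\in P^\h_X$ and $h_0$-faces $\sigma,\sigma'\face\tau$. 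Then $\cF_{\sigma\face\tau}(x_\sigma)=x_\tau=\cF_{\sigma'\face\tau}(x_{\sigma'})$, and Proposition~\ref{Prop:h0h1sections} gives $\Phi(x)\in\Gamma^{h_0,h_1}(X;\cF)$.

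Next we show that $\Phi$ is injective. Every simplex $\tau$ with $\dim\tau\in\h$ has $\dim\tau\ge h_0$, so it has at least one $h_0$-face $\sigma$. The compatibility condition then forces $x_\tau=\cF_{\sigma\face\tau}(x_\sigma)$, so the whole family is determined by its $h_0$-components.

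For surjectivity, take $y\in\Gamma^{h_0,h_1}(X;\cF)$. Set $x_\sigma=y_\sigma$ for $\dim\sigma=h_0$, and $x_\tau=\cF_{\sigma\face\tau}(y_\sigma)$ for any $h_0$-face $\sigma$ of $\tau$ when $\dim\tau=h_j$ with $j\ge1$. This is well defined because $\dim\tau\ge h_1$ and $y$ is an $(h_0,h_1)$-global section: by definition, every pair of $h_0$-faces of $\tau$ has the same image in $\cF(\tau)$. Compatibility for $\tau<\mu$ in $P^\h_X$ follows from functoriality of $\cF$: choose an $h_0$-face $\sigma\face\tau\face\mu$, so that $\cF_{\tau\face\mu}(x_\tau)=\cF_{\sigma\face\mu}(y_\sigma)=x_\mu$. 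Relations $\sigma<\tau$ with $\dim\sigma=h_0$ hold by construction.

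The case $m=0$, where $h_1$ is undefined, needs a convention. There $P^\h_X$ is discrete, so $H^{0,\h}=C^{h_0}(X;\cF)$. This agrees with the statement if $\Gamma^{h_0,h_1}$ is read as having no conditions, and I would note this explicitly.

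I expect the only delicate point to be justifying the identification of $H^0$ of the Roos complex with the inverse limit. This is standard, however: the kernel of $\delta^0$ consists exactly of families with $x_{p_1}=\cF_{p_0\le p_1}(x_{p_0})$, which is immediate from~\eqref{Eq:CoboundaryStandardResolution} with $n=0$.
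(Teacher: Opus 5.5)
Your proof is correct and follows essentially the paper's route: identify $H^{0,\h}(X;\cF)$ with $\varprojlim_{P_X^\h}\cF$, note that a compatible family is determined by its $h_0$-components, and reduce the remaining agreement conditions to $h_1$-simplices via Proposition~\ref{Prop:h0h1sections}; you additionally make the inverse map explicit and flag the $m=0$ convention, both of which the paper leaves implicit. You should delete the abandoned remark that any two $h_0$-faces of $\tau$ lie in a common $h_1$-face, since it is false in general (e.g.\ opposite edges of a tetrahedron with $h_0=1$, $h_1=2$), though your final argument does not use it.
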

\begin{proof}
First, with the notation in Section~\ref{Sec:HacesPosets}, note that $\widehat{\cF}_{|_{P_X^\h}}=\widehat{\cF_{|_{P_X^\h}}}$. Therefore,
\[
    H^{0,\h}(X;\cF)=H^0(P_X^\h;\widehat{\cF}_{|_{P_X^\h}})=\widehat{\cF_{|_{P_X^\h}}}(P_X^\h)=\plim{\sigma\in P_X^\h}\cF(\sigma)\, .
\]
Using the characterization of the limit of vector spaces, we obtain:
\[
\plim{\sigma\in P_X^\h}\cF(\sigma)=\{x\in \displaystyle\prod_{\dim\sigma\in \h}\cF(\sigma) : \cF_{\sigma\face\tau}(x_\sigma)=x_\tau \text{ for all } \sigma\face \tau \text{ with } \dim\sigma,\dim\tau\in \h\}\, .
\]
By the compatibility condition in the preceding equation, the value of $x_\tau$ is determined by the value of $x$ on any of its $h_0$-faces. Thus, the preceding description of the limit is equivalent to the following:
\begin{align*}
    \plim{\sigma\in P_X^\h}\cF(\sigma)\simeq \ & \{x\in \displaystyle\prod_{\dim\sigma=h_0}\cF(\sigma) :  \cF_{\sigma\face\tau}(x_\sigma)=\cF_{\sigma'\face \tau}(x_{\sigma'})\text{ for all } \sigma,\sigma'\face\tau \\[1.5ex]  & \text{ with }\dim \sigma=\dim\sigma'=h_0 \text{ and } \dim \tau= h_i\,,\text{ for some } 1\leq i\leq m\}\, .
\end{align*}
By the same argument as in Proposition~\ref{Prop:h0h1sections}, it suffices to check these equalities on the \mbox{$h_1$-simplices}:
\[
\plim{\sigma\in P_X^\h}\cF(\sigma)\simeq \{x\in \displaystyle\prod_{\dim\sigma=h_0}\cF(\sigma) :  \cF_{\sigma\face\tau}(x_\sigma)=\cF_{\sigma'\face \tau}(x_{\sigma'}) \text{ for all } \sigma,\sigma'\face\tau \text{ with } \dim \tau=h_1\}\, .
\]
Finally, this space coincides with the characterization of the space of $(h_0,h_1)$-global sections given in Proposition~\ref{Prop:h0h1sections}.
\end{proof}

\section{Structure theorems for cohesion persistence}\label{ss:cohesivepersistence}

The aim of this section is to apply the structure theorems of Section~\ref{Sec:StructureTheorems} to cohesion persistence modules of cellular sheaves. These modules are indexed by $\cP^{*,\op}_N$, making them multiparameter persistence modules. The abstract results of Section~\ref{Sec:StructureTheorems} provide interval decompositions for certain modules over this poset, but their application to sheaf cohesion requires a separate verification of the hypotheses in geometric terms.

We first introduce the construction of cohesion persistence for a cellular sheaf. We then prove that, in dimension $2$, the corresponding modules are interval-decomposable degreewise: $H^{0,\bullet}(X;\cF)$ under a local injectivity assumption, while $H^{1,\bullet}(X;\cF)$ and $H^{2,\bullet}(X;\cF)$ decompose for any cellular sheaf. Finally, we extend the decomposition in degree $0$ to higher-dimensional simplicial complexes whose connected components of dimension greater than $2$ are pure and whose sheaf restriction morphisms are injective.

Let $\h=\{h_0,\dots,h_m\}\in \cP^{*}_N$. The family of $\h$-face posets defines a filtration
\[
    P_X^\bullet\colon \cP^{*}_N\longrightarrow \pos,
    \qquad
    \h\longmapsto P_X^\h,
\]
whose maximal element is the face poset $P_X^{[N]}=P_X$. Given a cellular sheaf $\cF$ on $X$, we restrict the associated sheaf $\widehat{\cF}$ to each $P_X^\h$. For every $\h\leq \h'$ in $\cP^{*}_N$, the inclusion $P_X^\h\subseteq P_X^{\h'}$ induces, by functoriality of sheaf cohomology, a linear map
\[
H^{n}_{\h,\h'}\colon
H^n(P_X^{\h'};\widehat{\cF}_{|_{P_X^{\h'}}})
\longrightarrow
H^n(P_X^{\h};\widehat{\cF}_{|_{P_X^{\h}}}) .
\]
These maps are compatible with composition, and hence define a persistence module of topological type
\[
    H^{n,\bullet}(X;\cF)\colon
    \cP^{*,\op}_N\longrightarrow \vect,
    \qquad
    \h\longmapsto
    H^n(P_X^\h;\widehat{\cF}_{|_{P_X^\h}}).
\]

The case $N=1$ reduces to zigzag persistence, so the interval decomposition follows from Gabriel's theorem (Theorem~\ref{Thm:Gabriel}). For $N=2$, we prove a structure theorem by cohomological degree: $H^{0,\bullet}(X;\cF)$ decomposes under a local injectivity hypothesis, $H^{1,\bullet}(X;\cF)$ decomposes for every cellular sheaf, and $H^{2,\bullet}(X;\cF)$ is supported at a single point. 

We begin with several technical results that will be used in the proofs of the structure theorems.

\begin{lemma}\label{Lem:InyectivosIsomorfismos}
    Let $\h=\{h_0,h_1,\dots,h_m\}\in \cP^{*}_N$ with $m\geq 2$, and let $\h_i=\{h_0,\dots,\widehat{h}_i,\dots,h_m\}$. Consider the linear map induced by the inclusion $\h_i\leq \h$:
    \begin{equation}\label{Eq:MorfismoHnhhi}
        H^0_{\h_i,\h}\colon H^{0,\{h_0,\dots,h_m\}}(X;\cF)\longrightarrow H^{0,\{h_0,\dots,\widehat{h}_i,\dots,h_m\}}(X;\cF)\, .
    \end{equation}
    Then the following properties hold:
    \begin{itemize}
        \item The map $H_{\h_i,\h}^0$ is injective for $i=1$, and is an isomorphism for all $2\leq i\leq m$.
        \item If for every $h_1$-simplex $\sigma$ there exists an $h_2$-simplex $\tau$ such that $\sigma\face\tau$ and the restriction map $\cF_{\sigma\face\tau}$ is injective, then $H^0_{\h_1,\h}$ is an isomorphism.
        \item If for every $\sigma\in S^{h_0}(X)$ there exists $\tau\in S^{h_1}(X)$ such that $\sigma\face \tau$ and $\cF_{\sigma\face \tau}$ is injective, then $H^0_{\h_0,\h}$ is injective.
    \end{itemize}
\end{lemma}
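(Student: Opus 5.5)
The plan is to work entirely through Proposition~\ref{Prop:CohomGlobalSections} and the explicit inverse-limit description in its proof. For $\h=\{h_0,\dots,h_m\}$, $H^{0,\h}(X;\cF)$ is the space of families $(x_\sigma)_{\dim\sigma\in\h}$ satisfying $\cF_{\sigma\face\tau}(x_\sigma)=x_\tau$ whenever $\dim\sigma,\dim\tau\in\h$. The map $H^0_{\h_i,\h}$ is simply the projection forgetting the coordinates indexed by simplices of dimension $h_i$. This can be checked on the standard resolution in degree $0$, where $C^0=\prod\cF(p)$ and restriction to a subposet is projection.

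The key observation is that every coordinate $x_\tau$ is determined by the coordinates on the $h_0$-faces of $\tau$, via $x_\tau=\cF_{\sigma\face\tau}(x_\sigma)$ for any $h_0$-face $\sigma$. So $H^{0,\h}\simeq\Gamma^{h_0,h_1}(X;\cF)$ through the projection onto $C^{h_0}(X;\cF)$, and similarly $H^{0,\h_i}\simeq\Gamma^{h_0,h_1}$ for $i\ge2$, or $\Gamma^{h_0,h_2}$ for $i=1$. Under these identifications the projection $H^0_{\h_i,\h}$ corresponds to the identity on $h_0$-coordinates.

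\begin{itemize}
\item For $i\ge 2$, the map is the identity $\Gamma^{h_0,h_1}\to\Gamma^{h_0,h_1}$, hence an isomorphism.
\item For $i=1$, the map is the inclusion $\Gamma^{h_0,h_1}\subseteq\Gamma^{h_0,h_2}$ of Proposition~\ref{Prop:InclusionGlobalSections}, hence injective.
\end{itemize}

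Injectivity can also be seen directly: $h_0\in\h_i$ for $i\ge1$, and the $h_0$-coordinates determine everything. The second bullet is then exactly the equality case of Proposition~\ref{Prop:InclusionGlobalSections} with $h_1'=h_2$, which makes $H^0_{\h_1,\h}$ an isomorphism.

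For the third bullet, $h_0$ is removed, so $\h_0=\{h_1,\dots,h_m\}$. The same reasoning identifies $H^{0,\h_0}$ with $\Gamma^{h_1,h_2}$ (or with $C^{h_1}(X;\cF)$ if $m=1$), and the map sends an $(h_0,h_1)$-section $x$ to $(\cF_{\sigma\face\tau}(x_\sigma))_{\tau\in S^{h_1}}$, which is independent of the chosen face $\sigma$. To prove injectivity, suppose this image vanishes. For every $\sigma\in S^{h_0}(X)$, pick by hypothesis $\tau\in S^{h_1}(X)$ with $\sigma\face\tau$ and $\cF_{\sigma\face\tau}$ injective. Then $\cF_{\sigma\face\tau}(x_\sigma)=x_\tau=0$, so $x_\sigma=0$. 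Since all higher coordinates are determined by the $h_0$-ones, $x=0$.

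The main obstacle is the bookkeeping that identifies the maps induced by functoriality of sheaf cohomology with the coordinate projections. This requires checking that the identifications of Proposition~\ref{Prop:CohomGlobalSections} are compatible with restriction along $P_X^{\h_i}\subseteq P_X^{\h}$. In particular one needs that the $h_1$-coordinates are recovered from the $h_0$-ones regardless of which chain of intermediate dimensions is retained, which follows from functoriality of $\cF$. Simplices of dimension in $\h$ that have no $h_0$-face do not exist, since $h_0$ is the minimum, so there are no edge cases beyond $m=1$ in the third bullet.
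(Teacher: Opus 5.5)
Your proposal is correct and follows essentially the paper's proof. The first two bullets come from the identification $H^{0,\h}\simeq\Gamma^{h_0,h_1}(X;\cF)$ of Proposition~\ref{Prop:CohomGlobalSections} together with the inclusion and equality statements of Proposition~\ref{Prop:InclusionGlobalSections}. The third bullet uses the same vanishing argument on $h_0$-coordinates, based on the description of $H^0_{\h,\h'}$ as a coordinate projection; your parenthetical about $m=1$ is moot, since the hypothesis $m\geq 2$ guarantees $\h_0$ has at least two elements.
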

\begin{proof} For the first two parts, see Propositions~\ref{Prop:InclusionGlobalSections} and \ref{Prop:CohomGlobalSections}. As for the last part, recall that for each $\h\in \cP^{*}_N$,
 \[
   H^{0,\h}(X;\cF)=\{x\in \prod_{\dim\sigma\in \h}\cF(\sigma) : \cF_{\sigma\face\tau}(x_\sigma)=x_\tau \text{ for all } \sigma\face \tau \text{ with } \dim\sigma,\dim\tau\in \h\}\, .
   \]
 Moreover, for each $\h\leq \h'$, the map $H^{0}_{\h,\h'}$ is induced by the natural projection
   \[
  \prod_{\dim\sigma\in\h'}\cF(\sigma)\longrightarrow \prod_{\dim\sigma\in \h}\cF(\sigma)\, .
  \]
Now consider a vector $x\in H^{0,\h}(X;\cF)$ such that $H^0_{\h_0,\h}(x)=0$. Thus, $x_\tau=0$ for every $\tau$ such that $\dim\tau\neq h_0$. Let $\sigma\in S^{h_0}(X)$; we will show that $x_\sigma$ must also vanish.

By hypothesis, there exists an $h_1$-simplex $\tau$ such that $\sigma\face\tau$ and the map $\cF_{\sigma\face\tau}$ is injective. Since $x\in H^{0,\h}(X;\cF)$, we have $\cF_{\sigma\face\tau}(x_\sigma)=x_\tau=0$, and by the injectivity of the restriction map we conclude that $x_\sigma=0$.
\end{proof}

\begin{lemma}\label{Lem:DescomposicionComponentesConexas}
    Let $X_1,\dots,X_k$ be the connected components of $X$, and let $N_j\coloneqq \dim X_{j}$. Denote by $i_j$ the natural inclusion $\cP_{N_j}^{*,\op}\subseteq \cP^{*,\op}_N$. Then
    \begin{equation*}
        H^{n,\bullet}(X;\cF)\simeq {i_{1}}_*\,H^{n,\bullet}(X_1;\cF_{|_{X_1}})\,\oplus\,\dots\, \oplus\, {i_k}_*\,H^{n,\bullet}(X_k;\cF_{|_{X_k}})\, .
    \end{equation*}
\end{lemma}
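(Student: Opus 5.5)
The proof reduces to the fact that each $\h$-face poset splits as a disjoint union of open subsets, one per connected component. Its main ingredient is that sheaf cohomology on a finite $T_0$ space sends finite disjoint unions of open (hence clopen) pieces to direct products. I first fix the meaning of the pushforward. For a persistence module $\cM$ over $\cP^{*,\op}_{N_j}$, the module ${i_j}_*\cM$ over $\cP^{*,\op}_N$ is defined by $({i_j}_*\cM)_\h\coloneqq \cM_{\h\cap[N_j]}$ when $\h\cap[N_j]\neq\emptyset$, and $0$ otherwise, with the obvious transition maps. This is the natural reading, since $P_{X_j}^{\h}=P_{X_j}^{\h\cap[N_j]}$, and the subposet is empty when $\h\cap[N_j]=\emptyset$.

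\emph{Step 1 (pointwise decomposition).} Fix $\h\in\cP^*_N$. Since the $X_j$ are pairwise disjoint subcomplexes covering $X$, the face poset decomposes as $P_X=\bigsqcup_j P_{X_j}$ with no order relations between different pieces. Intersecting with the dimension condition gives
\[
P_X^\h=\bigsqcup_{j} P_{X_j}^{\h\cap[N_j]}.
\]
Each piece is upward closed in $P_X^\h$, so it is open, and its complement is a union of such pieces, so it is also closed. There are two ways to conclude. One is to use that sheaf cohomology of a disjoint union of open subspaces is the product of the cohomologies. The other, which is more concrete, is to observe that the Roos complex of Equation~\eqref{Eq:CoboundaryStandardResolution} splits: every chain $p_0<\dots<p_n$ lies entirely in one piece, so $C^\bullet(P_X^\h;\widehat{\cF})=\bigoplus_j C^\bullet(P_{X_j}^{\h\cap[N_j]};\widehat{\cF})$ as cochain complexes. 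Finally, $\widehat{\cF}_{|_{P_{X_j}}}=\widehat{\cF_{|_{X_j}}}$, so each summand computes $H^{n,\h\cap[N_j]}(X_j;\cF_{|_{X_j}})$. Empty pieces contribute $0$.

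\emph{Step 2 (naturality).} For $\h\le\h'$ the inclusion $P_X^\h\subseteq P_X^{\h'}$ respects the decomposition: it maps the $j$-th piece into the $j$-th piece. On the Roos complexes, the induced map is restriction to chains contained in the smaller poset, so it is block diagonal with respect to the splitting in Step 1. Its $j$-th block is the transition map of $H^{n,\bullet}(X_j;\cF_{|_{X_j}})$ between $\h\cap[N_j]$ and $\h'\cap[N_j]$; this map is zero when the source index set is empty. Hence the pointwise isomorphisms commute with all transition maps, and they assemble into the stated isomorphism of persistence modules.

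\emph{Main obstacle.} The one point requiring care is identifying the functorial map $H^n(f)$ for an inclusion of posets with the map induced by restricting Roos cochains. I would justify this using naturality of the standard resolution under inverse images along order-preserving maps. This is the same compatibility already used implicitly when the paper defines $H^{n,\bullet}$ as a module of topological type, so it can be invoked rather than reproved.
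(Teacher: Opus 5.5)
Your proof is correct and follows essentially the paper's route: split $P_X^\h=\bigsqcup_j P_{X_j}^\h$ to get a pointwise direct sum, then use $P_{X_j}^{\h}=P_{X_j}^{\h\cap[N_j]}$. The one difference is that the paper takes ${i_j}_*$ to be given by the limit formula $\varprojlim_{i_j(\h')\geq_{\op}\h}$ and derives your evaluation-at-$\h\cap[N_j]$ formula from the fact that $\h\cap[N_j]$ is the least element of the indexing set. You instead adopt that formula as a definition, and you spell out the naturality check that the paper leaves implicit; that check needs no Roos-complex identification, because the pointwise isomorphism is given by restriction to the clopen pieces, so compatibility follows from functoriality of $H^n$ applied to the commuting squares formed by the inclusions $P_{X_j}^{\h}\hookrightarrow P_{X_j}^{\h'}$ and $P_X^{\h}\hookrightarrow P_X^{\h'}$.
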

\begin{proof}
    For each $\h\in \cP^{*}_N$, we have $P_X^{\h}=\bigsqcup P_{X_j}^\h$, and therefore
    \[
    H^{n,\h}(X;\cF)=H^{n}(P_X^\h;\widehat{\cF}_{|_{P_X^\h}})=\bigoplus_{j=1}^k H^n(P_{X_j}^\h;\widehat{\cF}_{|_{P_{X_j}^{\h}}})\, ,
    \]
    from which we deduce that
    \begin{equation}\label{Eq:DescomposicionComConexas}
        H^{n,\bullet}(X;\cF)\simeq \bigoplus_{j=1}^k H^{n}(P_{X_j}^\bullet;\widehat{\cF}_{|_{P_{X_j}^\bullet}})\,.
    \end{equation}
    On the other hand, for each $1\leq j\leq k$,
    \[
    \left({i_j}_* H^{n,\bullet}(X_j;\cF_{|_{X_j}})\right)_\h=
        \plim{i_j(\h')\geq_{\op} \h}H^{n,\h'}(X_j;\cF_{|_{X_j}})\, .
    \]
    The subset $\h_j\coloneqq \{h\in \h : h\leq N_j\}$ is the least element (with respect to the opposite order) such that $i_j(\h_j)\geq_{\op}\h$. Therefore, the natural morphism
    \[
    \plim{i_j(\h')\geq_{\op} \h}H^{n,\h'}(X_j;\cF_{|_{X_j}})\longrightarrow H^{n,\h_j}(X_j;\cF_{|_{X_j}})
    \]
    is an isomorphism. Moreover, for each $\h\leq_{\op} \overline{\h}$, the following diagram commutes:
    \[
    \begin{tikzcd}
        \plim{i_j(\h')\geq_{\op} \h}H^{n,\h'}(X_j;\cF_{|_{X_j}})\arrow[r,"\sim"]\arrow[d] & H^{n,\h_j}(X_j;\cF_{|_{X_j}})\arrow[d] \\
        \plim{i_j(\h')\geq_{\op} \overline{\h}}H^{n,\h'}(X_j;\cF_{|_{X_j}})\arrow[r,"\sim"] &  H^{n,\overline{\h}_j}(X_j;\cF_{|_{X_j}})
    \end{tikzcd}
    \]
    Taking into account that $P_{X_j}^{\h_j}=P_{X_j}^\h$, we deduce:
    \[
    H^{n,\h_j}(X_j;\cF_{|_{X_j}})=H^n(P_{X_j}^{\h_j};(\widehat{\cF_{|_{X_j}}})_{|_{P_{X_j}^{\h_j}}})=H^{n}(P_{X_j}^{\h_j};\widehat{\cF}_{|_{P_{X_j}^{\h_j}}})=H^n(P_{X_j}^\h;\widehat{\cF}_{|_{P_{X_j}^\h}})\,.
    \]
    That is, for each $1\leq j\leq k$, there exists an isomorphism of functors
    \[
    {i_j}_* H^{n,\bullet}(X_j;\cF_{|_{X_j}})\stackrel{\sim}\longrightarrow H^{n}(P_{X_j}^\bullet;\widehat{\cF}_{|_{P_{X_j}^\bullet}})\,, 
    \]
    Substituting into the isomorphism of Equation~\eqref{Eq:DescomposicionComConexas}, the conclusion follows.
\end{proof}
\begin{corollary}\label{Cor:DescomposicionComponentesConexas}
    With the notation of the preceding lemma, if $H^{n,\bullet}(X_j;\cF_{|_{X_j}})$ decomposes as a direct sum of interval modules for every $1\leq j\leq k$, then $H^{n,\bullet}(X;\cF)$ does as well.
\end{corollary}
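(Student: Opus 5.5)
The plan is to combine Lemma~\ref{Lem:DescomposicionComponentesConexas} with two facts: pushforward along $i_j$ carries interval modules to interval modules, and it commutes with finite direct sums. Suppose that for each $j$ we have a decomposition
\[
H^{n,\bullet}(X_j;\cF_{|_{X_j}})\simeq\bigoplus_{\alpha}\Bbbk[I_{j,\alpha}],
\]
with $I_{j,\alpha}\subseteq\cP^{*,\op}_{N_j}$ intervals. Since ${i_j}_*$ is a right Kan extension, i.e.\ a limit, it is additive. Hence ${i_j}_*H^{n,\bullet}(X_j;\cF_{|_{X_j}})\simeq\bigoplus_\alpha {i_j}_*\Bbbk[I_{j,\alpha}]$, and Lemma~\ref{Lem:DescomposicionComponentesConexas} gives
\[
H^{n,\bullet}(X;\cF)\simeq\bigoplus_{j,\alpha}{i_j}_*\Bbbk[I_{j,\alpha}].
\]
It therefore suffices to show that each ${i_j}_*\Bbbk[I]$ is an interval module over $\cP^{*,\op}_N$.

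To compute ${i_j}_*\Bbbk[I]$, I would use the description from the proof of the lemma. For $\h\in\cP^*_N$, set $r_j(\h)\coloneqq\h\cap[N_j]$, the least element over $\h$ in the opposite order. Then $({i_j}_*\cM)_\h\simeq\cM_{r_j(\h)}$ naturally in $\h$. One edge case needs care: if $\h\cap[N_j]=\emptyset$, the indexing category is empty, so the limit is $0$. Otherwise we get $({i_j}_*\Bbbk[I])_\h=\Bbbk$ exactly when $r_j(\h)\in I$. The transition maps are the identity whenever both ends lie in $J\coloneqq r_j^{-1}(I)$, because $r_j$ is order-preserving. So ${i_j}_*\Bbbk[I]\simeq\Bbbk[J]$, provided $J$ is an interval of $\cP^{*,\op}_N$.

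The step needing genuine (if mild) verification is that $J$ is an interval.

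\textbf{Convexity.} Take $\h\le\h''\le\h'$ (inclusions) with $\h,\h'\in J$. Then $r_j(\h)\subseteq r_j(\h'')\subseteq r_j(\h')$, and all three are non-empty because $r_j(\h)$ is. Convexity of $I$ in $\cP^*_{N_j}$ then gives $r_j(\h'')\in I$, so $\h''\in J$.

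\textbf{Connectedness.} Take $\h\in J$. We have $r_j(\h)\subseteq\h$, so $\h$ is joined to $r_j(\h)\in I\subseteq J$ by a single comparable step inside $J$. Any two elements of $I$ are connected within $I$, since $I$ is connected and $i_j$ is order-preserving. Hence $J$ is connected.

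With $J$ an interval, ${i_j}_*\Bbbk[I]\simeq\Bbbk[J]$ is an interval module, and the corollary follows.
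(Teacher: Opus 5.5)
Your proposal is correct and follows essentially the same route as the paper. Both arguments identify ${i_j}_*\Bbbk[I]$ with $\Bbbk[J]$ for $J=\{\h : \h\cap[N_j]\in I\}$ (the paper's $\widetilde{I_\lambda}$), check convexity and connectedness the same way (including the single step $\h\le_{\op}\h\cap[N_j]$), and substitute into Lemma~\ref{Lem:DescomposicionComponentesConexas}. You are slightly more careful than the paper in stating the additivity of ${i_j}_*$ explicitly and in treating the case $\h\cap[N_j]=\emptyset$, where the limit is taken over an empty category and gives $0$.
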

\begin{proof}
    Suppose that for each $1\leq j\leq k$ there is a decomposition
    \begin{equation}\label{Eq:CohesiveDecompositionConnectedComponent}
        H^{n,\bullet}(X_j;\cF_{|_{x_j}})\simeq \bigoplus_{\lambda\in \Lambda_j}\Bbbk[I_{\lambda}]\, ,
    \end{equation}
    for a certain set of intervals $\{I_\lambda\}$ of $\cP_{N_j}^{*,\op}$. For each $I_\lambda$, consider the following subset of $\cP^{*,\op}_N$:
    \begin{equation}\label{Eq:IntervaloAsociado}
        \widetilde{I_\lambda}\coloneqq\{\h\in \cP^{*,\op}_N\colon \h \cap [N_j] \in I_\lambda\}\, .
    \end{equation}
    This subset is an interval:
    \begin{enumerate}
        \item $\widetilde{I}_\lambda$ is convex: let $\h,\h'\in \widetilde{I}_\lambda$, and let $\h^{\prime\prime}\in \cP^{*,\op}_N$ be such that $\h\leq_{\op}\h^{\prime\prime}\leq_{\op} \h'$. Then,
        \[
        \h\cap [N_j]\leq_{\op}\h^{\prime\prime}\cap [N_j]\leq_{\op} \h'\cap [N_j] \,.
        \]
        Since $\h\cap [N_j],\,\h^{\prime}\cap [N_j]\in I_\lambda$ and this interval is convex, it follows that $\h^{\prime\prime}\cap [N_j]\in I_\lambda$, and hence $\h^{\prime\prime}\in \widetilde{I}_\lambda$.
        \item $\widetilde{I}_\lambda$ is connected: let $\h,\,\h'\in \widetilde{I}_\lambda$. Since $\h\cap [N_j],\h'\cap [N_j]$ are elements of $ I_\lambda$, which is connected, there exists a sequence $\{\h^i\}_{i=1}^n\subseteq I_\lambda$ such that $\h^1=\h\cap [N_j]$, $\h^n=\h'\cap [N_j]$, and $\h^i\leq_{\op}\h^{i+1}$ or $\h^i\geq_{\op}\h^{i+1}$ for all $1\leq i<n$. Now, $\h^i\in \widetilde{I}_\lambda$ for all $1\leq i\leq n$, $\h\leq_{\op}\h^1$, and $\h'\leq_{\op}\h^n$, so the sequence $\{\h,\h^1,\dots,\h^n,\h'\}$ forms a path in $\widetilde{I}_\lambda$ connecting $\h$ and $\h'$.
    \end{enumerate}
    Moreover, one has ${i_j}_*\Bbbk[I_\lambda]=\Bbbk[\widetilde{I}_{\lambda}]$ because for each $\h\in \cP^{*,\op}_N$,
    \[
    ({i_j}_*\Bbbk[I_\lambda])_{\h}=\plim{i_j(\h')\geq_{\op}\h}\Bbbk[I_\lambda]_{\h'}=\Bbbk[I_\lambda]_{\h\cap [N_j]}=\Bbbk[\widetilde{I}_\lambda]_{\h}\,.
    \]
    Substituting into the decomposition of Lemma~\ref{Lem:DescomposicionComponentesConexas}, we obtain a decomposition into interval modules:
    $$H^{n,\bullet}(X;\cF)\simeq \bigoplus_{j=1}^k\bigoplus_{\lambda\in \Lambda_j}\, \Bbbk[\widetilde{I_\lambda}]\, .$$
\end{proof}

\begin{theorem}\label{Thm:DescomposicionCohesiveDim2}
Let $X$ be a $2$-dimensional simplicial complex, and let $\cF$ be a cellular sheaf on $X$. Then the following statements hold:

\begin{enumerate}[label=(\roman*)]
    \item Suppose that for every vertex $v$ in a $2$-dimensional connected component of $X$, there exists an incident edge $e$ such that the map $\cF_{v\face e}$ is injective. Then the cohesion persistence module
    \[
        H^{0,\bullet}(X;\cF)\colon
        \cP^{*,\op}_2
        \longrightarrow \vect
    \]
    decomposes into interval persistence modules.
    \item The cohesion persistence module
    \[
        H^{1,\bullet}(X;\cF)\colon
        \cP^{*,\op}_2
        \longrightarrow \vect
    \]
    decomposes into interval persistence modules.
    \item The cohesion persistence module
    \[
        H^{2,\bullet}(X;\cF)\colon
        \cP^{*,\op}_2
        \longrightarrow \vect
    \]
    is supported solely at the index $\h=\{0,1,2\}$, making its interval decomposition trivial.
    \end{enumerate}
\end{theorem}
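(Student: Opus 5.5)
We reduce the problem to simple quiver-theoretic statements in each degree. By Corollary~\ref{Cor:DescomposicionComponentesConexas}, it suffices to treat each connected component separately. A component of dimension $0$ gives a module over $\cP^{*,\op}_0$, a single point, so it is trivially interval-decomposable. A component of dimension $1$ gives a module over $\cP^{*,\op}_1$, which is the zigzag $\{0\}\leftarrow\{0,1\}\rightarrow\{1\}$, and Gabriel's theorem (Theorem~\ref{Thm:Gabriel}) applies. So we may assume $X$ is connected of dimension $2$. Throughout, we use that for each $\h$ the module value is computed by the Roos complex of $\widehat{\cF}_{|_{P_X^\h}}$, where $C^n$ is indexed by chains $p_0<\dots<p_n$ in $P_X^\h$.

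For (i), the plan is to verify the hypotheses of Theorem~\ref{Thm:Decomposition2} with $i=2$, using the description of $H^{0,\h}$ as a limit in the proof of Proposition~\ref{Prop:CohomGlobalSections}. In that description, every transition map is a coordinate projection.
\begin{itemize}
\item The isomorphism $[2]\to\{0,1\}$ is the case $i=2$ of Lemma~\ref{Lem:InyectivosIsomorfismos}.
\item Injectivity of $[2]\to\{0,2\}$ is the case $i=1$ of that lemma.
\item Injectivity of $[2]\to\{1,2\}$ is the third bullet of the lemma, applied with the vertex--edge injectivity hypothesis.
\item The maps $\{0,1\}\to\{0\}$ and $\{0,2\}\to\{0\}$ are injective because values on higher simplices are determined by their vertex values. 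The same argument gives injectivity of $\{1,2\}\to\{1\}$.
\item The map $\{0,1\}\to\{1\}$ is injective because if $x_e=0$ for all edges, then the injective $\cF_{v\face e}$ forces $x_v=0$. Here we use that every vertex of a connected $2$-dimensional complex has an incident edge.
\end{itemize}
The remaining required injectivities are compositions of these maps.

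For (iii), observe that if $|\h|\le 2$, then $P_X^\h$ has no chains of length $2$. Hence $C^2(P_X^\h;\widehat{\cF})=0$, so $H^{2,\h}=0$ and only $\h=\{0,1,2\}$ survives.

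For (ii), the same observation shows that $H^{1,\h}=0$ whenever $\h$ is a singleton, since then $P_X^\h$ is discrete. Hence the support lies in $\{[2],\{0,1\},\{0,2\},\{1,2\}\}$. In $\cP^{*,\op}_2$ this is a star with source $[2]$ and three arms, that is, a quiver of Dynkin type $D_4$. By Gabriel's classification for Dynkin quivers, every indecomposable is given by a positive root. The only root that is not an interval module is $(2;1,1,1)$, in which all three arm maps are surjections $\Bbbk^2\to\Bbbk$ with nonzero kernels.

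It therefore suffices to show that one arm map is injective, since injectivity passes to direct summands. I would show that $H^{1,[2]}\to H^{1,\{0,1\}}$ is injective. Via Theorem~\ref{Thm:EquivalenciaCohomologiacelularyhaces}, whose naturality must be checked to be compatible with the inclusion $P_X^{\{0,1\}}=P_{X^{(1)}}\subseteq P_X$, this map is the restriction $H^1(X;\cF)\to H^1(X^{(1)};\cF_{|_{X^{(1)}}})$ in cellular cohomology. Its kernel can be computed directly. Let $z\in C^1(X;\cF)$ be a cocycle that restricts to a coboundary $\delta y$ on the $1$-skeleton. The cellular cochains of $X$ and $X^{(1)}$ agree in degrees $0$ and $1$, so $z=\delta y$ already in $C^1(X;\cF)$. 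Equivalently, the relative cochain complex of $(X,X^{(1)})$ vanishes below degree $2$.

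The main obstacle is the identification of the poset-level map with the cellular one, and the resulting quiver argument. If the naturality of Shepard's isomorphism under restriction is delicate, I would instead compare Roos complexes directly and check the injectivity at cocycle level on $P_X$ versus $P_X^{\{0,1\}}$.
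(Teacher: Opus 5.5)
Your proposal is correct. Parts (i) and (iii) are essentially the paper's proof. In (i) you reduce to a connected $2$-dimensional component and verify the hypotheses of Theorem~\ref{Thm:Decomposition2} with $i=2$ via Lemma~\ref{Lem:InyectivosIsomorfismos}. In (iii) you observe that $P_X^\h$ has no chains of length $2$ when $|\h|\le 2$.

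For (ii) you take a genuinely different route. The paper uses injectivity of $H^1_{\{0,1\},\{0,1,2\}}$ to split off a complement of its image at $\{0,1\}$. This is possible because $\{0,1\}$ is a sink of the support, as $H^1$ vanishes at the singletons. What remains is a summand in which $[2]\to\{0,1\}$ is an isomorphism, so (ii) reduces to the $A_3$ zigzag $\{0,2\}\leftarrow[2]\to\{1,2\}$ and Gabriel's theorem for zigzags.

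You instead invoke Gabriel's classification for the $D_4$ star. Every thin positive root gives an interval module; note that any connected subset of $\{[2],\{0,1\},\{0,2\},\{1,2\}\}$ is automatically convex in $\cP^{*,\op}_2$, since nothing lies strictly between $[2]$ and a $2$-element set. You then exclude the single remaining indecomposable $(2;1,1,1)$ because its arm maps have nonzero kernels, whereas an injective arm stays injective on every summand.

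What each route buys:
\begin{itemize}
\item The paper's route is more elementary: it needs only type $A$, and the splitting is one line.
\item Yours uses heavier machinery, but it isolates exactly which indecomposable could break interval-decomposability (three distinct lines in a plane). It also makes clear that the existence of one injective arm is the entire content of (ii).
\end{itemize}

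Both routes hinge on the same injectivity, which the paper asserts without proof. Your justification supplies it: the cellular cochains of $X$ and $X^{(1)}$ agree in degrees $\le 1$. Your Roos-complex fallback also works directly, without any appeal to naturality of Theorem~\ref{Thm:EquivalenciaCohomologiacelularyhaces}:
\begin{itemize}
\item Subtract from the cocycle the coboundary of the zero-extension of $y'$. The remaining cocycle $w$ is supported on chains $p<t$ with $t$ a triangle.
\item The cocycle condition on $v<e<t$ gives $w_{e<t}=w_{v<t}$. Hence $w$ is constant on the connected poset of proper faces of each triangle $t$.
\item Therefore $w$ is the coboundary of a $0$-cochain supported on triangles.
\end{itemize}
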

\begin{remark}
The injectivity hypothesis in Theorem~\ref{Thm:DescomposicionCohesiveDim2}(i) should be understood as a local faithfulness condition. It does not require every restriction map of the sheaf to be injective. Rather, it asks that each vertex in a $2$-dimensional connected component has at least one incident edge along which its local data are transmitted without collapse. Equivalently, if $\cF_{v\face e}$ is injective, then two private states of $v$ that induce the same expression on $e$ must already be equal. In particular, if a state is expressed as zero through this channel, then the state itself is zero. Thus, in applications where vertices represent agents and edges represent communication contexts, the condition says that every agent has at least one faithful communication channel. This is a natural non-degeneracy assumption rather than a global injectivity requirement on the whole sheaf.
\end{remark}

\begin{proof}[Proof of (i)]
By Corollary~\ref{Cor:DescomposicionComponentesConexas}, it suffices to prove that $H^{0,\bullet}(X_j;\cF_{|_{X_j}})$ decomposes into interval modules for each connected component $X_j$ of $X$. If the connected component $X_j$ has dimension $0$, the decomposition is trivial, and if it has dimension $1$, the decomposition exists by Gabriel's theorem (Theorem~\ref{Thm:Gabriel}). Thus, suppose that $X$ is connected and of dimension $2$.

The persistence module $H^{0,\bullet}(X;\cF)$ has the form:
    \begin{equation}\label{Eq:DiagramStructureTheoremCohesive1}
    \includegraphics[width=0.65\linewidth]{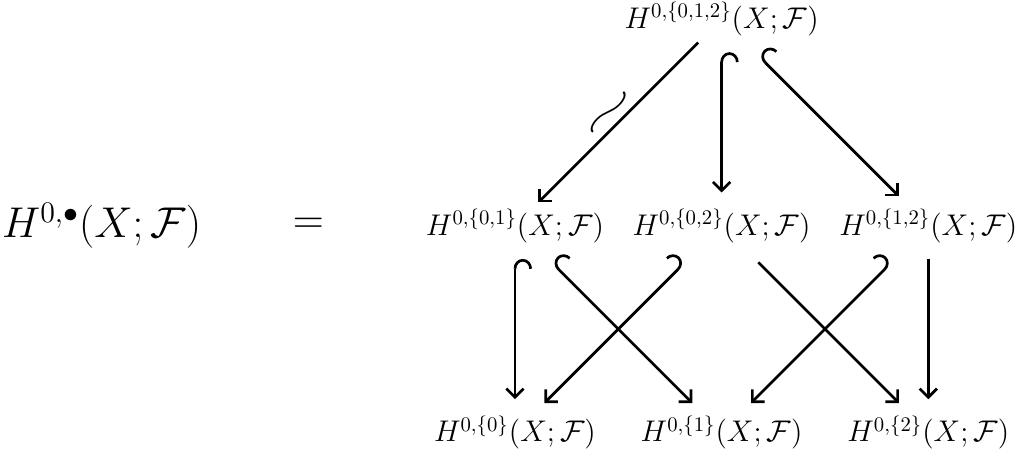}
    \end{equation}
    Indeed, following Lemma~\ref{Lem:InyectivosIsomorfismos}, we know that, in general, the map
    \[
    H^0_{\{0,1\},\{0,1,2\}}\colon H^{0,\{0,1,2\}}(X;\cF)\longrightarrow H^{0,\{0,1\}}(X;\cF)
    \]
    is an isomorphism, and the following maps are injective:
    \[
    \begin{aligned}
        H^0_{\{0,2\},\{0,1,2\}} &\colon H^{0,\{0,1,2\}}(X;\cF)\longrightarrow H^{0,\{0,2\}}(X;\cF)\,, \\[1.5ex]
        H^0_{\{0\},\{0,1\}}     &\colon H^{0,\{0,1\}}(X;\cF)\longrightarrow H^{0,\{0\}}(X;\cF)\,, \\[1.5ex]
        H^0_{\{0\},\{0,2\}}     &\colon H^{0,\{0,2\}}(X;\cF)\longrightarrow H^{0,\{0\}}(X;\cF)\,, \\[1.5ex]
        H^0_{\{1\},\{1,2\}}     &\colon H^{0,\{1,2\}}(X;\cF)\longrightarrow H^{0,\{1\}}(X;\cF)\,,
    \end{aligned}
    \]
    Moreover, since $X$ is connected of dimension $2$ and satisfies the injectivity condition from vertices to edges, by Lemma~\ref{Lem:InyectivosIsomorfismos}, the maps
    \[
    \begin{aligned}
        H^{0}_{\{1,2\},\{0,1,2\}} & \colon H^{0,\{0,1,2\}}(X;\cF)\longrightarrow H^{0,\{1,2\}}(X;\cF)\,,\\[1.5ex]
        H^{0}_{\{1\},\{0,1\}} & \colon H^{0,\{0,1\}}(X;\cF)\longrightarrow H^{0,\{1\}}(X;\cF)\,,
    \end{aligned}
    \]
    are also injective.

    Therefore, $H^{0,\bullet}(X;\cF)$ satisfies the hypotheses of Theorem~\ref{Thm:Decomposition2} for the index $i=2$. Consequently, it decomposes into a direct sum of interval modules.
\end{proof}
\begin{proof}[Proof of (ii)]
    The module $H^{1,\bullet}(X;\cF)$ has the form
    \begin{equation*}
    \includegraphics[width=0.7\linewidth]{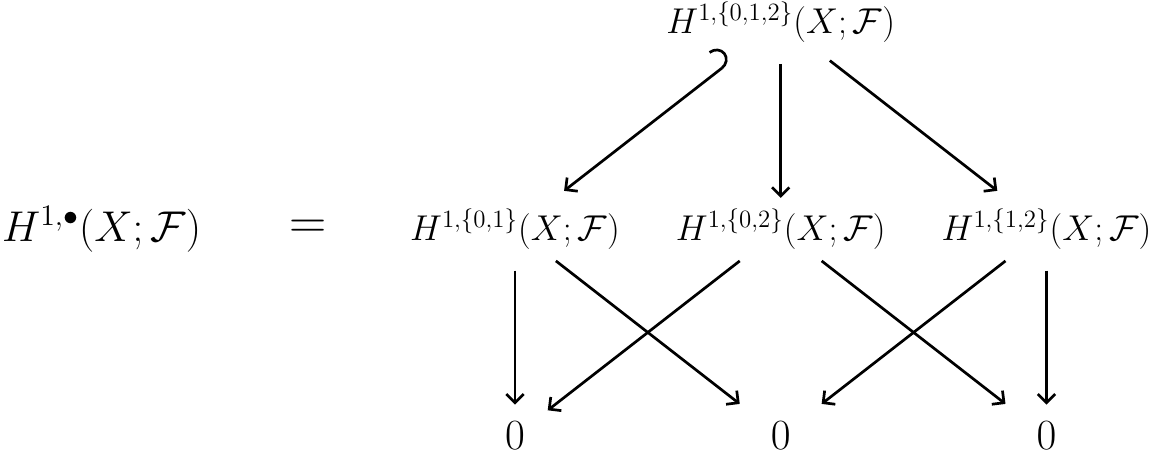}
    \end{equation*}
    Since the morphism $H^1_{\{0,1\},\{0,1,2\}}\colon H^{1,\{0,1,2\}}(X;\cF)\to H^{1,\{0,1\}}(X;\cF)$ is injective, by taking a subspace $\cM_{\{0,1\}}$ such that $H^{1,\{0,1\}}(X;\cF)=\img H^1_{\{0,1\},\{0,1,2\}}\oplus \cM_{\{0,1\}}$, we obtain the decomposition
\begin{equation*}\label{Eq:DiagramStructureTheoremDim1Desc}
        \centering
    \includegraphics[width=0.9\linewidth]{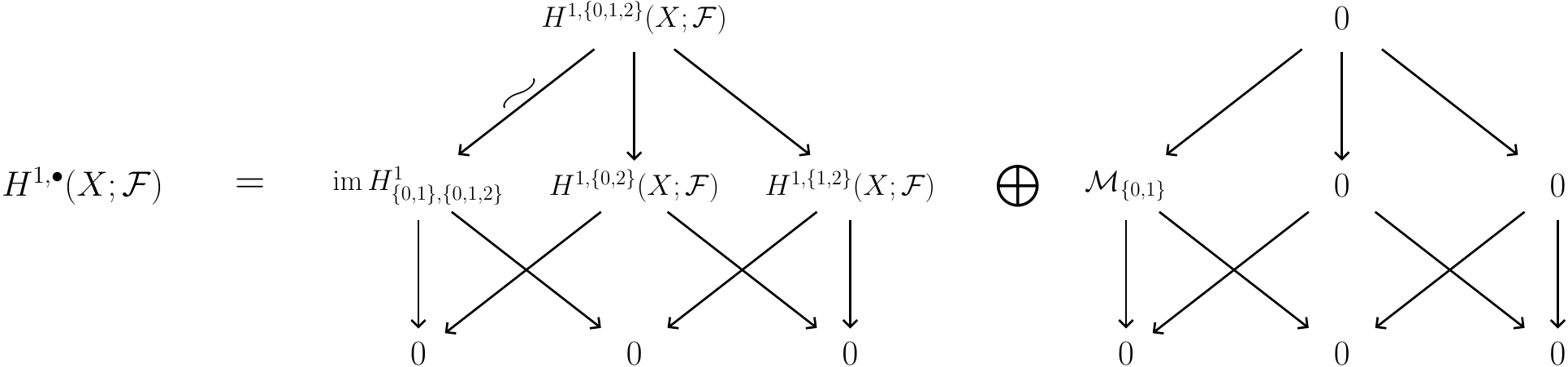}
    \end{equation*}
    To conclude, the decomposition of the first persistence module in the preceding direct sum reduces to that of the persistence module
    \[
    \begin{tikzcd}[column sep=4ex]
        & H^{1,\{0,1,2\}}(X;\cF)\arrow[rd]\arrow[ld]\\
        H^{1,\{0,2\}}(X;\cF) & & H^{1,\{1,2\}}(X;\cF)
    \end{tikzcd}
    \]
    This is a zigzag module, and the conclusion follows by Gabriel's theorem (Theorem~\ref{Thm:Gabriel}).
\end{proof}
\begin{proof}[Proof of (iii)]
If $\h\neq \{0,1,2\}$, then the poset $P_X^\h$ has chains of length at most $1$. Hence
\[
    H^{2,\h}(X;\cF)=0.
\]
Therefore, the persistence module $H^{2,\bullet}(X;\cF)$ is supported solely at the index $\{0,1,2\}$. This makes its decomposition trivial, consisting entirely of copies of the interval module $\Bbbk[\{\{0,1,2\}\}]$. Letting $n\coloneqq \dim H^{2,\{0,1,2\}}(X;\cF)$, we obtain
\[
    H^{2,\bullet}(X;\cF)
    \simeq
    \Bbbk[\{\{0,1,2\}\}]\,\oplus\,\stackrel{n}{\cdots}\,\oplus \,\Bbbk[\{\{0,1,2\}\}].
\]
\end{proof}

The decomposition of $H^{0,\bullet}(X;\cF)$ into interval modules extends to higher dimensions under additional structural hypotheses on both the simplicial complex and the cellular sheaf.

\begin{theorem}\label{Thm:DescomposicionPN}
   Let $X$ be a simplicial complex of dimension $N$ such that its connected components of dimension greater than $2$ are pure (not necessarily of dimension $N$), and let $\cF$ be a cellular sheaf on $X$ with injective restriction maps. Then the persistence module $H^{0,\bullet}(X;\cF)$ decomposes into interval persistence modules.
\end{theorem}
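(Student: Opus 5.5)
By Corollary~\ref{Cor:DescomposicionComponentesConexas}, it suffices to show that $H^{0,\bullet}(X_j;\cF_{|_{X_j}})$ is interval-decomposable for each connected component $X_j$ of $X$, viewed as a module over $\cP^{*,\op}_{N_j}$ with $N_j=\dim X_j$. The restriction $\cF_{|_{X_j}}$ still has injective restriction maps. I will split into two cases according to $N_j$.

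\emph{Case $N_j\leq 2$.} If $N_j=0$ the module is supported at a single index, so its decomposition is trivial. If $N_j=1$ it is a zigzag, and Theorem~\ref{Thm:Gabriel} applies. If $N_j=2$, every vertex of $X_j$ lies on some edge because $X_j$ is connected of positive dimension. Since all maps $\cF_{v\face e}$ are injective, the hypothesis of Theorem~\ref{Thm:DescomposicionCohesiveDim2}(i) holds, and that theorem gives the decomposition.

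\emph{Case $N_j>2$.} Here $X_j$ is connected and pure of dimension $N_j$. I will verify the two hypotheses of Theorem~\ref{Thm:DecompositionN} for $\cM=H^{0,\bullet}(X_j;\cF_{|_{X_j}})$ over $\cP^{*,\op}_{N_j}$.

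For the isomorphism condition, fix $\h=\{h_0,\dots,h_m\}$ with $m\geq 2$. Lemma~\ref{Lem:InyectivosIsomorfismos} already shows that $H^0_{\h_i,\h}$ is an isomorphism for $2\leq i\leq m$. For $i=1$, purity means every $h_1$-simplex is a face of an $N_j$-simplex, hence of some $h_2$-simplex $\tau$. Injectivity of $\cF$ makes $\cF_{\sigma\face\tau}$ injective, so the second bullet of Lemma~\ref{Lem:InyectivosIsomorfismos} gives that $H^0_{\h_1,\h}$ is an isomorphism.

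The main step is injectivity of $H^0_{\h,\h'}$ for every $\h\leq\h'$. It is enough to remove one dimension at a time, which is also automatic since compositions of injective maps are injective. I will use the description in the proof of Lemma~\ref{Lem:InyectivosIsomorfismos}:
\[
H^{0,\h'}=\varprojlim_{\dim\sigma\in\h'}\cF(\sigma),
\]
and the transition map $H^0_{\h,\h'}$ is the projection onto the coordinates with $\dim\sigma\in\h$. Let $x\in H^{0,\h'}$ vanish on all simplices with dimension in $\h$, and let $\tau$ be a simplex with $d=\dim\tau\in\h'\smallsetminus\h$. I must show $x_\tau=0$.
\begin{enumerate}
\item If some $h\in\h$ satisfies $h<d$, pick an $h$-face $\sigma\face\tau$. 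Then $x_\tau=\cF_{\sigma\face\tau}(x_\sigma)=0$.
\item Otherwise every element of $\h$ exceeds $d$. Take $h=\min\h\leq N_j$. By purity $\tau$ is a face of some $h$-simplex $\mu$. Then $\cF_{\tau\face\mu}(x_\tau)=x_\mu=0$, and injectivity of $\cF_{\tau\face\mu}$ forces $x_\tau=0$.
\end{enumerate}
This proves injectivity. I expect this to be the key point: it is exactly where purity and global injectivity of $\cF$ are both needed, whereas Lemma~\ref{Lem:InyectivosIsomorfismos} only covers the maps $\h_i\leq\h$.

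With both hypotheses verified, Theorem~\ref{Thm:DecompositionN} gives an interval decomposition of each component module. Corollary~\ref{Cor:DescomposicionComponentesConexas} then assembles these into an interval decomposition of $H^{0,\bullet}(X;\cF)$. Corollary~\ref{Cor:DecompositionN_Explicit} even makes the summands explicit on the components of dimension greater than $2$.
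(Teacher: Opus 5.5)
Your proof is correct and follows the paper's route. You reduce to connected components via Corollary~\ref{Cor:DescomposicionComponentesConexas}, handle components of dimension at most $2$ with Theorem~\ref{Thm:DescomposicionCohesiveDim2}, and, for pure components of dimension greater than $2$, verify the hypotheses of Theorem~\ref{Thm:DecompositionN} using Lemma~\ref{Lem:InyectivosIsomorfismos}, purity and injectivity of $\cF$. Your direct check that every $H^0_{\h,\h'}$ is injective is in fact more careful than the paper's one-line appeal to the lemma: the lemma only covers maps of the form $\h_i\leq\h$ with $m\geq 2$, and removing $h_0$ also needs injectivity of $\cF$, not just purity.
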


\begin{remark}
The purity assumption in Theorem~\ref{Thm:DescomposicionPN} is componentwise. We do not require the whole complex to be pure of a single dimension; components of different dimensions may coexist, and the condition only applies to connected components of dimension greater than $2$. This hypothesis is common in higher-order network models where the relevant interactions are generated by facets of a fixed order. In network science, several standard higher-order models are built precisely from $d$-dimensional simplices, or from ensembles of simplicial complexes with prescribed higher-order degrees~\cite{Bianconi_Rahmede16}, \cite{Bianconi_Rahmede17}, \cite{Courtney_Bianconi16}, \cite{Courtney_Bianconi17}. 
\end{remark}

\begin{proof}
    By Corollary~\ref{Cor:DescomposicionComponentesConexas}, we may assume that $X$ is connected, and by Theorem~\ref{Thm:DescomposicionCohesiveDim2}, we may assume that $X$ has dimension greater than $2$. Since $X$ is pure, Lemma~\ref{Lem:InyectivosIsomorfismos} ensures that all morphisms $H^0_{\h,\h'}$ are injective. Additionally, given that the restriction morphisms of the sheaf $\cF$ are assumed to be injective, the same lemma guarantees that the maps
    \begin{equation}\label{Eq:Isomorfismos2}
        H^0_{\h_i,\h}\colon H^{0,\{h_0,\dots,h_m\}}(X;\cF)\longrightarrow H^{0,\{h_0,\dots,\widehat{h}_i,\dots,h_m\}}(X;\cF)
    \end{equation}
    are isomorphisms for every $\h=\{h_0,\dots,h_m\}$ with $m\geq 2$ and every $1\leq i\leq m$. 
    
    Consequently, the persistence module $H^{0,\bullet}(X;\cF)$ satisfies the hypotheses of Theorem~\ref{Thm:DecompositionN}, yielding the desired decomposition.
\end{proof}

\begin{remark}
From a computational perspective, note that, by Corollary~\ref{Cor:DecompositionN_Explicit}, the interval decomposition of the cohesion persistence module on each connected component $X_j$ of dimension greater than $2$ is completely determined by the dimensions of the cohesion spaces of the form $H^{0,\{k\}}(X_j;\cF_{|_{X_j}})$ and $H^{0,\{k,k+1,\dots,\dim X_j\}}(X_j;\cF_{|_{X_j}})$.
\end{remark}

A primary example satisfying the conditions of Theorem~\ref{Thm:DescomposicionPN} is the constant cellular sheaf $\Bbbk$, which yields the following structural result for ordinary cohesion persistence modules (Equation~\eqref{Eq:CohesionModuleConstantSheaf}):
\begin{corollary}
Let $X$ be a simplicial complex of dimension $N$ such that its connected components of dimension greater than $2$ are pure. Then the cohesion persistence module
\[
H^{0}(P_X^\bullet;\Bbbk)\colon \cP_N^{*,\op} \longrightarrow \vect
\]
decomposes into interval persistence modules.
\end{corollary}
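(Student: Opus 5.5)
The plan is to derive the corollary directly from Theorem~\ref{Thm:DescomposicionPN}, applied to the constant cellular sheaf $\Bbbk$ on $X$. This sheaf assigns $\Bbbk$ to every simplex and $\mathrm{Id}_\Bbbk$ to every face inclusion $\sigma\face\tau$. Every restriction map is therefore injective, so the sheaf hypothesis of Theorem~\ref{Thm:DescomposicionPN} holds. The hypothesis on $X$ (connected components of dimension greater than $2$ are pure) is identical to the one assumed in the corollary. The theorem therefore gives an interval decomposition of $H^{0,\bullet}(X;\Bbbk)$.

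The remaining step is to identify $H^{0,\bullet}(X;\Bbbk)$ with the module $H^0(P_X^\bullet;\Bbbk)$ of Equation~\eqref{Eq:CohesionModuleConstantSheaf}. Pointwise this is immediate from the definitions. The associated sheaf $\widehat{\Bbbk}$ on $P_X$ is the constant sheaf, and its restriction to $P_X^\h$ is again the constant sheaf on $P_X^\h$. Hence $H^{0,\h}(X;\Bbbk)=H^0(P_X^\h;\Bbbk)$ for every $\h$.

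We also need the transition maps to agree, which is the only point requiring any care. On both sides, the map attached to $\h\leq\h'$ is the map induced on sheaf cohomology by the inclusion $P_X^\h\subseteq P_X^{\h'}$, via functoriality with respect to order-preserving maps. Concretely, in degree $0$ both are the projection $\prod_{\dim\sigma\in\h'}\Bbbk\to\prod_{\dim\sigma\in\h}\Bbbk$ restricted to compatible families, as described in the proof of Lemma~\ref{Lem:InyectivosIsomorfismos}. So the two modules coincide as functors $\cP_N^{*,\op}\to\vect$.

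If one prefers to work with the simplicial model $H^0(\mathcal{K}(P_X^\h);\Bbbk)$, the earlier proposition identifying the standard-resolution complex with the simplicial cochain complex of the order complex provides the isomorphism. Since that identification is defined at the cochain level, it is natural with respect to the inclusions of order complexes. Interval decomposability is preserved under isomorphism of persistence modules, which concludes the argument.
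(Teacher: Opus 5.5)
Your proposal is correct and matches the paper's argument: the corollary follows by applying Theorem~\ref{Thm:DescomposicionPN} to the constant cellular sheaf $\Bbbk$, whose restriction maps are identities and hence injective. Your extra check that $H^{0,\bullet}(X;\Bbbk)$ agrees with $H^0(P_X^\bullet;\Bbbk)$ as a functor, transition maps included, makes explicit an identification that the paper leaves implicit.
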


\begin{example}[An example of Theorem~\ref{Thm:DescomposicionCohesiveDim2}]\label{e:exDim2}    
Consider the cellular sheaf $\cF$ on the simplicial complex $X$ in Figure~\ref{Fig:SheafPersistentCohesion}, which consists of three connected components: a triangle, an edge, and a vertex. This sheaf satisfies the conditions of Theorem~\ref{Thm:DescomposicionCohesiveDim2} because every vertex of the triangle has at least one injective restriction morphism to an edge. Notice that, according to the conditions of the theorem, there may be edges whose pair of restriction morphisms from their vertices are not injective (in our example, this happens on the base edge of the triangle).

    \begin{figure}[htb!]
        \centering
        \includegraphics[width=0.7\linewidth]{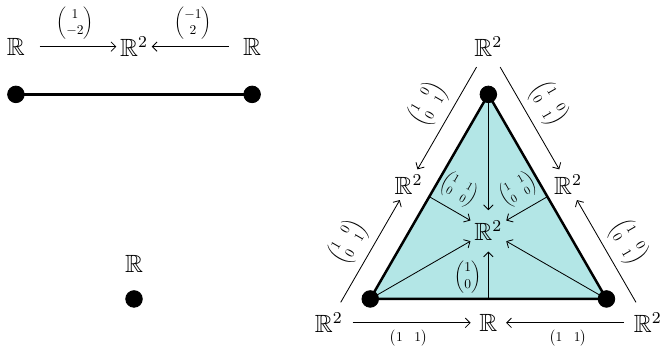}
        \caption{Cellular sheaf $\cF$ (the maps from the vertices to the triangle are obtained by composition through an edge).}
        \label{Fig:SheafPersistentCohesion}
    \end{figure}

    To compute the intervals forming the decomposition of $H^{0,\bullet}(X;\cF)$ (Figure~\ref{Fig:SheafPersistentCohesion-Barcode}), we follow the procedure described in the proof of Theorem~\ref{Thm:DescomposicionCohesiveDim2}.
    
    First, denote by $X_0$, $X_1$, and $X_2$ the connected components formed, respectively, by the $0$-simplex, the $1$-simplex, and the $2$-simplex. Then, by Lemma~\ref{Lem:DescomposicionComponentesConexas},
    \begin{equation}\label{Eq:DescomposicionBase}
         H^{0,\bullet}(X;\cF)\simeq {i_{0}}_*H^{0,\bullet}(X_0;\cF_{|_{X_0}}) \oplus {i_{1}}_*H^{0,\bullet}(X_1;\cF_{|_{X_1}}) \oplus H^{0,\bullet}(X_2;\cF_{|_{X_2}})\, .
    \end{equation}

    For the isolated vertex, $H^{0,\bullet}(X_0;\cF_{|_{X_0}})=\R[\{0\}]$. Therefore, following the proof of Corollary~\ref{Cor:DescomposicionComponentesConexas}, ${i_0}_*\R[\{0\}]=\R[\widetilde{\{0\}}]$, where $\widetilde{\{0\}}$ denotes the interval (Equation~\eqref{Eq:IntervaloAsociado}):
   \[
   \widetilde{\{0\}}=\{\h\in \cP^{*,\op}_2 :\h\cap \{0\}= \{0\}\}= \{\h\in \cP^{*,\op}_2 :\{0\}\in \h\}= \{\{0\},\{0,1\},\{0,2\},\{0,1,2\}\}
   \]
       That is,
    \begin{equation}\label{Eq:DescomposicionX0}
        {i_0}_*H^{0,\bullet}(X_0;\cF_{|_{X_0}})=\R[\{\{0\},\{0,1\},\{0,2\},\{0,1,2\}\}]\,.
    \end{equation}
    
    As for the cohesion persistence module of the edge, $H^{0,\bullet}(X_1;\cF_{|_{X_1}})$, it has the form:
    \begin{equation}\label{Eq:DiagramaX1}
            \begin{tikzcd}
        & H^{0,\{0,1\}}(X_1;\cF_{|_{X_1}}) \arrow[ld] \arrow[rd]\\
        H^{0,\{0\}}(X_1;\cF_{|_{X_1}}) &  & H^{0,\{1\}}(X_1;\cF_{|_{X_1}})
    \end{tikzcd}
    \end{equation}
    From the explicit computation of these cohomology spaces and the induced linear maps, we obtain the following interval decomposition:
    \[
    H^{0,\bullet}(X_1;\cF_{|_{X_1}})\simeq \R[\cP_1^{*,\op}]\oplus \R[\{0\}] \oplus \R[\{1\}]\, .
    \]
The direct image of each of these interval modules by the inclusion $i_{1}$ is given by the intervals (Equation~\eqref{Eq:IntervaloAsociado}):
\begin{align*}
    \widetilde{\cP^{*,\op}_1}&=\{\h\in \cP^{*,\op}_2 : \h\cap \{0,1\}\in \cP^{*,\op}_1\}=\cP^{*,\op}_2\smallsetminus\{2\}\,,\\[1.5ex]
    \widetilde{\{0\}}&=\{\h\in \cP^{*,\op}_2 :\h\cap \{0,1\}= \{0\}\}=\{\{0\},\{0,2\}\}\,,\\[1.5ex]
    \widetilde{\{1\}}&=\{\h\in \cP^{*,\op}_2 :\h\cap \{0,1\}= \{1\}\}=\{\{1\},\{1,2\}\}\,.
\end{align*}
Thus,
\begin{equation}\label{Eq:Descomposicioni1X1}
    {i_{1}}_*H^{0,\bullet}(X_1;\cF_{|_{X_1}})=\R[\cP^{*,\op}_2\smallsetminus\{2\}]\oplus \R[\{\{0\},\{0,2\}\}]\oplus \R[\{\{1\},\{1,2\}\}]\,.
\end{equation}

Let us now turn to the $2$-dimensional component, $X_2$. Since the structure of this module is significantly more complex, we detail the computation of its cohomology spaces and their respective bases. These details are necessary to identify $H^{0,\bullet}(X_2;\cF_{|_{X_2}})$ with the abstract persistence module analyzed in Example~\ref{Ex:Decomposition2}. To this end, we use the following notation:
\begin{itemize}
    \item $v_2$ for the lower-left vertex of the triangle,
    \item $v_3$ for the lower-right vertex,
    \item $v_4$ for the upper vertex,
    \item $e_{ij}$ for the edge joining $v_i$ and $v_j$,
    \item $\sigma_{234}$ for the $2$-simplex $\{v_2,v_3,v_4\}$,
    \item $\{x_i,y_i\}$ for the vectors of the canonical basis of $\cF(v_i)=\R^2$,
    \item $\{x_{23}\}$, $\{x_{24},y_{24}\}$, $\{x_{34},y_{34}\}$, $\{x_{234},y_{234}\}$, respectively, for the canonical bases of $\cF(e_{23})$, $\cF(e_{24})$, $\cF(e_{34})$, and $\cF(\sigma_{234})$.
\end{itemize}
 
To simplify the computations, we use the isomorphism given by Theorem~\ref{Thm:EquivalenciaCohomologiacelularyhaces}:
\[
H^{0,\{0,1,2\}}(X_2;\cF_{|_{X_2}})=H^{0}(P_{X_2};\widehat{\cF}_{|_{X_2}})\simeq H^0(X_2;\cF_{|_{X_2}})\,.
\]
By definition of cellular sheaf cohomology, $H^0(X_2;\cF_{|_{X_2}})$ is the kernel of the coboundary operator:
\[
\begin{array}{ccc}
  \R^2_{v_2}\oplus \R_{v_3}^2\oplus \R_{v_4}^2  & \stackrel{\delta^0}\longrightarrow& \R_{e_{23}}\oplus\R_{e_{24}}^2\oplus \R^2_{e_{34}}\\[1.5ex]
   ((\alpha_2,\beta_2),(\alpha_3,\beta_3),(\alpha_4,\beta_4))& \longmapsto & (\alpha_3+\beta_3-\alpha_2-\beta_2,\\  & & (\alpha_2-\alpha_4,\,\beta_{2}-\beta_4),\\& & (\alpha_3-\alpha_4,\,\beta_{3}-\beta_4)) 
\end{array}
\]
Thus,
\[
\begin{aligned}
    H^{0,\{0,1,2\}}(X_2;\cF_{|_{X_2}})&\simeq\{((\alpha_2,\beta_2),(\alpha_2,\beta_2),(\alpha_2,\beta_2))\in \R^2_{v_2}\oplus \R_{v_3}^2\oplus \R_{v_4}^2 : \alpha_2,\beta_2\in\R\}=\R^2\,.
\end{aligned}
\]
Let us now turn to the cohesion spaces associated with subsets of indices of the form $\{h_0,h_1\}$. First, note that $H^{0,\{0,1\}}(X_2;\cF_{|_{X_2}})\simeq H^{0,\{0,1,2\}}(X_2;\cF_{|_{X_2}})$. On the other hand, $H^{0,\{0,2\}}(X_2;\cF_{|_{X_2}})$ is the kernel of the linear map:
\[
\begin{array}{ccc}
    \R^2_{v_2}\oplus \R_{v_3}^2\oplus \R_{v_4}^2\oplus \R_{\sigma_{234}}^2& \stackrel{\delta^0}\longrightarrow &  \R^2_{v_2\lhd \sigma_{234}}\oplus \R^2_{v_3\lhd \sigma_{234}}\oplus\R^2_{v_4\lhd \sigma_{234}} \\[1.5ex]
      ((\alpha_2,\beta_2),(\alpha_3,\beta_3),(\alpha_4,\beta_4),(\alpha_{234},\beta_{234}))& \longmapsto & ((\alpha_{234}-\alpha_2-\beta_2,\,\beta_{234}),\\  & & (\alpha_{234}-\alpha_3-\beta_3,\,\beta_{234}),\\& & (\alpha_{234}-\alpha_4-\beta_4,\,\beta_{234}))
\end{array}
\]
Therefore,
\[
\begin{aligned}
    H^{0,\{0,2\}}(X_2;\cF_{|_{X_2}})&=\{((\alpha_2,\,\beta_2),\,(\alpha_2+\beta_2-\beta_3,\,\beta_3),\,(\alpha_2+\beta_2-\beta_4,\,\beta_4),\,(\alpha_2+\beta_2,\,0)) \}=\R^4\,.
\end{aligned}
\]
Next, the space $H^{0,\{1,2\}}(X_2;\cF_{|_{X_2}})$ is the kernel of the map:
\[
\begin{array}{ccc}
     \R_{e_{23}}\oplus \R_{e_{24}}^2\oplus\R_{e_{34}}^2\oplus\R_{\sigma_{234}}& \stackrel{\delta^0}\longrightarrow & \R^2_{e_{23}\face \sigma_{234}}\oplus \R^2_{e_{24}\face \sigma_{234}}\oplus\R^2_{e_{34}\face \sigma_{234}} \\[1.5ex]
    (\alpha_{23},\,(\alpha_{24},\,\beta_{24}),\,(\alpha_{34},\,\beta_{34}),\,(\alpha_{234},\,\beta_{234}))& \longmapsto & ((\alpha_{234}-\alpha_{23},\,\beta_{234}),\\  & &(\alpha_{234}-\alpha_{24}-\beta_{24},\,\beta_{234}),\\ & & (\alpha_{234}-\alpha_{34}-\beta_{34},\,\beta_{234}))
\end{array}
\]
and therefore,
\[
\begin{aligned}
    H^{0,\{1,2\}}(X_2;\cF_{|_{X_2}})&=\{(\alpha_{23},\,(\alpha_{23}-\beta_{24},\,\beta_{24}),\,(\alpha_{23}-\beta_{34},\,\beta_{34}),\,(\alpha_{23},\,0))\}=\R^3\,.
\end{aligned}
\]
Finally, the cohomology spaces associated with a single dimensional index are:
\[
\begin{aligned}
    H^{0,\{0\}}(X_2;\cF_{|_{X_2}})& =\langle x_2, y_2, x_3,y_3, x_4,y_4\rangle =\R^6\,,\\[1.5ex]
    H^{0,\{1\}}(X_2;\cF_{|_{X_2}})&=\langle x_{23}, x_{24}, y_{24}, x_{34}, y_{34}\rangle=\R^5\,,\\[1.5ex]
    H^{0,\{2\}}(X_2;\cF_{|_{X_2}})&=\langle x_{234}, y_{234}\rangle =\R^2\,.
\end{aligned}
\]

We have also obtained the following bases:
\begin{itemize}
    \item $\mathcal{B}_{012}=\{x_2+x_3+x_4,\, y_2+y_3+y_4\}$ of $H^{0,\{0,1,2\}}(X_2;\cF_{|_{X_2}})$,
    \item $\mathcal{B}_{01}=\{x_2+x_3+x_4,\, y_2+y_3+y_4\}$ of $H^{0,\{0,1\}}(X_2;\cF_{|_{X_2}})$,
    \item $\mathcal{B}_{02}=\{x_2+x_3+x_4+x_{234},\, y_2+x_3+x_4+x_{234},\,-x_3+y_3,\,-x_4+y_4\}$ of $H^{0,\{0,2\}}(X_2;\cF_{|_{X_2}})$,
    \item $ \mathcal{B}_{12}=\{ x_{23}+x_{24}+x_{34}+x_{234},\,-x_{24}+y_{24},\,-x_{34}+y_{34}\}$ of $H^{0,\{1,2\}}(X_2;\cF_{|_{X_2}})$,
    \item $\mathcal{B}_{0}=\{ x_2, y_2, x_3,y_3, x_4,y_4\}$ of $H^{0,\{0\}}(X_2;\cF_{|_{X_2}})$,
    \item $\mathcal{B}_{1}=\{x_{23}, x_{24}, y_{24}, x_{34}, y_{34}\}$ of $H^{0,\{1\}}(X_2;\cF_{|_{X_2}})$,
    \item $\mathcal{B}_{2}=\{x_{234}, y_{234}\}$ of $H^{0,\{2\}}(X_2;\cF_{|_{X_2}})$.
\end{itemize}

Relative to these bases, the cohesion persistence module $H^{0,\bullet}(X_2;\cF_{|_{X_2}})$ can be shown to have the same structure as the persistence module from Example~\ref{Ex:Decomposition2}. By the calculations in that example (Equation~\eqref{Eq:Decomposition2-Decomposition}), we have:
\begin{equation}\label{Eq:DecompositionH0X2}
\begin{split}
    H^{0,\bullet}(X_2;\cF_{|_{X_2}})\simeq &\, \mathbb{R}[\cP^{*,\op}_2]\,\oplus \,\mathbb{R}[\cP^{*,\op}_2\smallsetminus\{2\}]\, \oplus\, \mathbb{R}[\{\{0\},\{0,2\}\}]^2\,\oplus\\[1ex]
        & \mathbb{R}[\{\{1\},\{1,2\}\}]\,\oplus \,\mathbb{R}[\{0\}]^2\oplus \mathbb{R}[\{1\}]^2 \,\oplus\, \mathbb{R}[\{2\}].
\end{split}
\end{equation}

Substituting the computations of Equations~\eqref{Eq:DescomposicionX0},~\eqref{Eq:Descomposicioni1X1}, and~\eqref{Eq:DecompositionH0X2} into the isomorphism of Equation~\eqref{Eq:DescomposicionBase}, we obtain the decomposition of $H^{0,\bullet}(X;\cF)$ into interval modules represented in Figure~\ref{Fig:SheafPersistentCohesion-Barcode}:
\[
\begin{aligned}
    H^{0,\bullet}(X;\cF)\simeq & \ \R[\cP^{*,\op}_2]\,\oplus\,\R[\cP^{*,\op}_2\smallsetminus\{2\}]^2\oplus \R[\{\{0\},\{0,1\},\{0,2\},\{0,1,2\}\}]\\[1.5ex]
  & \oplus \R[\{\{0\},\{0,2\}\}]^3\oplus\R[\{\{1\},\{1,2\}\}]^2\oplus \R[\{0\}]^2\oplus\R[\{1\}]^2\oplus\R[\{2\}]\,.
\end{aligned}
\]

\end{example}
    \begin{figure}[htb!]
        \centering
        \includegraphics[width=0.95\linewidth]{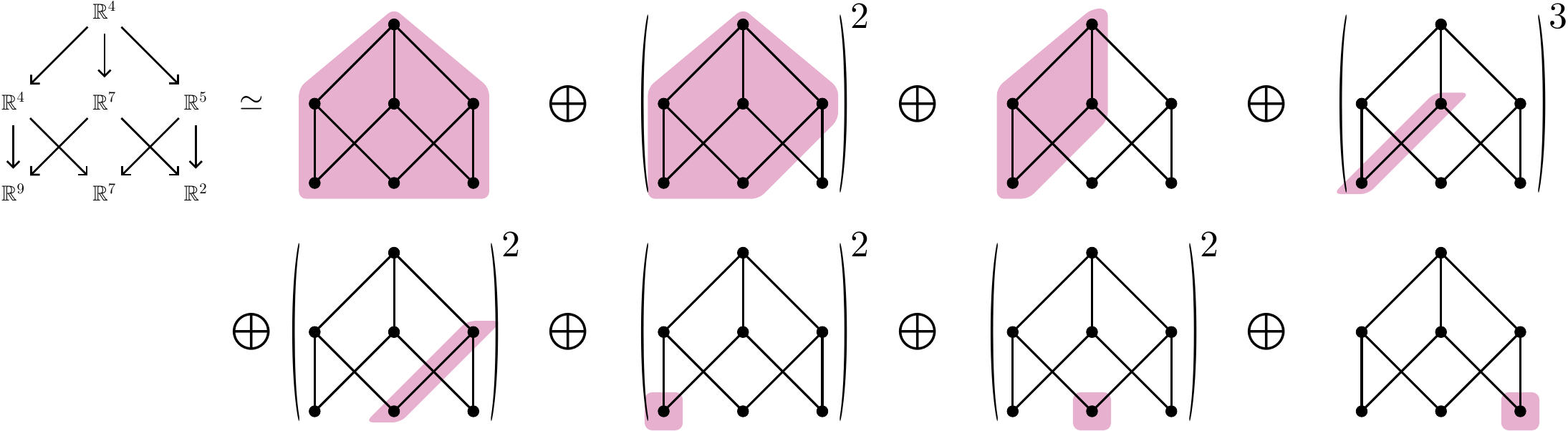}
        \caption{Interval decomposition of $H^{0,\bullet}(X;\cF)$ for the cellular sheaf in Figure~\ref{Fig:SheafPersistentCohesion}. }
        \label{Fig:SheafPersistentCohesion-Barcode}
    \end{figure}

\section{Sheaf theory and resilience}

We now extend the topological resilience framework of \cite{Pablo_Dani_Dario_25} to cellular sheaves. Given a degenerative process on a simplicial complex, we study how the algebraic information encoded by a sheaf persists when its simplicial support is degraded. The resulting constructions are biparameter persistence modules: one parameter describes the attack on the support, while the other records either a thickness or a cohesion threshold.

\subsection{Thickness resilience for sheaves} Given a cofiltration of simplicial complexes
\[    X^{\bullet}\colon
    X^0\supseteq X^1\supseteq\cdots\supseteq X^M,
\]
let $N\coloneqq\dim X^0$, and let $\cF$ be a cellular sheaf on $X^0$. By restricting $\cF$ to each stage of the cofiltration, we obtain the sheaf persistence module of topological type
\[
    H^n(X^\bullet;\cF_{|_{X^\bullet}})
    \colon [M]\longrightarrow\vect.
\]
This module records the persistence of sheaf cohomology under the structural degradation of the support. To incorporate thickness, we take coskeleta at every stage. Specifically, denote by $X^{i,q}\coloneqq (X^i)^q$ the $q$-coskeleton of $X^i$. The family $\{X^{i,q}\}_{(i,q)\in [M]\times [N]}$ forms a biparameter cofiltration starting at $X^{0,0}=X^0$. Restricting $\cF$ to these subcomplexes and taking cohomology gives a biparameter persistence module of topological type
\[
\begin{array}{lccl}
    H^{n,\bullet}_{\mathrm{th}}(X^\bullet;\cF) \colon &[M]\times [N]& \longrightarrow&\vect \\
    &(i,q)& \longmapsto&
    H^n(X^{i,q};\cF_{|_{X^{i,q}}}).
\end{array}
\]

For a fixed thickness threshold $q$, this biparameter module contains the ladder module
\[
\begin{tikzcd}
H^n(X^{0,q};\mathcal{F}_{|_{X^{0,q}}}) \arrow[r] 
& H^n(X^{1,q};\mathcal{F}_{|_{X^{1,q}}}) \arrow[r] 
& \dots \arrow[r] 
& H^n(X^{M,q};\mathcal{F}_{|_{X^{M,q}}}) \\
H^n(X^0;\mathcal{F}_{|_{X^0}}) \arrow[u, "\varphi^{n,0,q}"] \arrow[r] 
& H^n(X^1;\mathcal{F}_{|_{X^1}}) \arrow[u,"\varphi^{n,1,q}"] \arrow[r]
& \dots \arrow[r] 
& H^n(X^M;\mathcal{F}_{|_{X^M}}) \arrow[u, "\varphi^{n,M,q}"]
\end{tikzcd}
\]
The lower row measures the persistence of the sheaf cohomology along the degradation process, while the upper row measures the persistence of its $q$-thickness cohomology. The image persistence module
\begin{equation*}
        \img \varphi^{n,\bullet,q} \colon \img \varphi^{n,0,q} \to \img \varphi^{n,1,q} \to \cdots \to \img \varphi^{n,M,q}
    \end{equation*}
records the classes that remain supported on simplices of dimension at least $q$; the kernel 
 \begin{equation*}
        \ker \varphi^{n,\bullet,q} \colon \ker \varphi^{n,0,q} \to \ker \varphi^{n,1,q} \to \cdots \to \ker \varphi^{n,M,q}
    \end{equation*}
records classes lost after passing to the $q$-coskeleta; and the cokernel 
\begin{equation*}
        \coker \varphi^{n,\bullet,q} \colon \coker \varphi^{n,0,q} \to \coker \varphi^{n,1,q} \to \cdots \to \coker \varphi^{n,M,q}
  \end{equation*}
records classes created by this restriction.

\subsection{Cohesion resilience for sheaves}

We now incorporate cohesion. For each stage $X^i$ and each set of dimensions $\h\in\cP^{*}_N$, consider the $\h$-face poset $P_{X^i}^{\h}$. These posets form a cofiltration of finite topological spaces indexed by $[M]\times\cP^{*,\op}_N$, whose initial term is $P_{X^0}^{[N]}=P_{X^0}$. Restricting the sheaf $\widehat{\cF}$ to these finite topological spaces and applying sheaf cohomology yields a biparameter persistence module
\[
\begin{array}{rccl}
    H^{n,\bullet}_{\mathrm{coh}}(X^\bullet;\cF) \colon &[M]\times\cP^{*,\op}_N& \longrightarrow&\vect \\
    &(i,\h)&\longmapsto &H^n(P_{X^i}^{\h};\widehat{\cF}_{|_{\scriptstyle{P_{X^i}^{\h}}}})
\end{array}
\]
Fixing a cohesion index $\h$, the inclusions $P_{X^i}^{\h}\subseteq P_{X^i}$ induce a ladder module
\[
\begin{tikzcd}
H^{n}(P_{X^0}^\h;\widehat{\cF}_{|_{\scriptstyle P_{X^0}^\h}})  \arrow[r]
& H^{n}(P_{X^1}^{\h};\widehat{\cF}_{|_{\scriptstyle P_{X^1}^\h}})  \arrow[r]
& \dots \arrow[r]
& H^n(P_{X^M}^\h;\widehat{\cF}_{|_{\scriptstyle P_{X^M}^\h}})  \\
H^n(P_{X^0};\widehat{\cF}_{|_{\scriptstyle P_{X^0}}}) \arrow[u, "\varphi^{n,0,\h}"] \arrow[r]
& H^n(P_{X^1};\widehat{\cF}_{|_{\scriptstyle P_{X^1}}}) \arrow[u, "\varphi^{n,1,\h}"] \arrow[r]
& \dots \arrow[r]
& H^n(P_{X^M};\widehat{\cF}_{|_{\scriptstyle P_{X^M}}}) \arrow[u, "\varphi^{n,M,\h}"]
\end{tikzcd}
\]
By Theorem~\ref{Thm:EquivalenciaCohomologiacelularyhaces}, the lower row is the sheaf cohomology persistence module along the original degradation process. The upper row measures the persistence of $\h$-cohesion cohomology. The image module $\img\varphi^{n,\bullet,\h}$ tracks the classes that are completely determined by the $\h$-dimensional simplices; the kernel $\ker \varphi^{n,\bullet,\h}$ records classes destroyed by ignoring the remaining dimensions; and the cokernel $\coker \varphi^{n,\bullet,\h}$ captures classes created by this selective deletion. 

\subsection{An illustrative example of sheaf data under topological degradation}

    Let $X^0$ denote the triangulated annulus in Figure~\ref{Fig:Filtraciondeataques}A, and let $\cF$ be the rank-$1$ cellular sheaf over $X^0$ illustrated in Figure~\ref{Fig:ResilienciaHacesHaz}. Consider the cofiltration $X^0\supset X^1 \supset X^2$, represented in Figure~\ref{Fig:Filtraciondeataques}, which we regard as an attack cofiltration, or as a degenerative process of $X^0$, where:
    \begin{itemize}
\item \(X^0\) is the full triangulated annulus,
\item \(X^1=X^{0}\smallsetminus\{\sigma_{015},\,\sigma_{145},\,e_{14},\,e_{15}\}\) is an annulus with a $1$-dimensional seam defined by the edge~$e_{05}$,
\item \(X^2=X^{1}\smallsetminus\{e_{05}\}\).
\end{itemize}
\begin{figure}[htb!]
    \centering
    \begin{subfigure}{0.32\textwidth}
        \centering
        \includegraphics[width=0.95\linewidth]{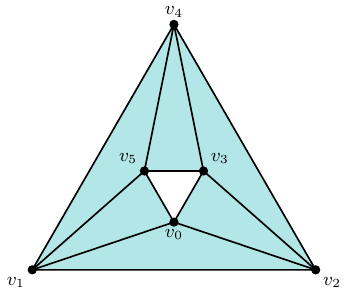}
        \caption{$X^0$}\label{Fig:FiltraciondeataquesA}
    \end{subfigure}
    \hfill
    \begin{subfigure}{0.32\textwidth}
        \centering
        \includegraphics[width=0.95\linewidth]{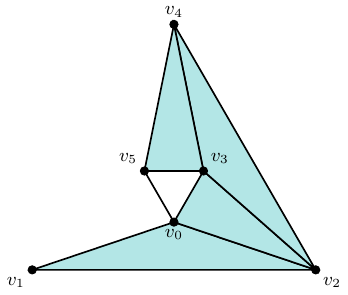}
        \caption{$X^1$}
    \end{subfigure}
    \hfill
    \begin{subfigure}{0.32\textwidth}
        \centering
        \includegraphics[width=0.95\linewidth]{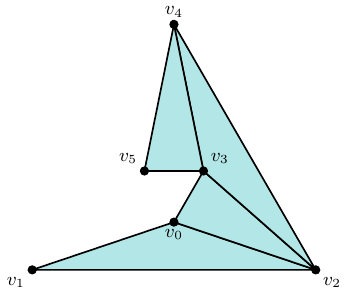}
        \caption{$X^2$}
    \end{subfigure}
    \caption{Attack cofiltration $X^\bullet$.}
    \label{Fig:Filtraciondeataques}
\end{figure}

\begin{figure}
    \centering
    \includegraphics[width=0.65\linewidth]{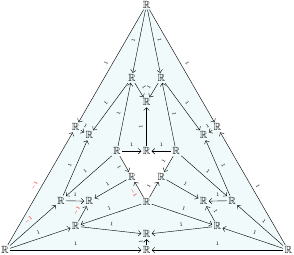}
    \caption{Cellular sheaf $\cF$ on the simplicial complex $X^0$ of Figure~\ref{Fig:Filtraciondeataques}A.}
    \label{Fig:ResilienciaHacesHaz}
\end{figure}

The cofiltration of $1$-coskeleta is exactly the original cofiltration, since it has no isolated vertices. For this reason, we focus on the study of thickness in dimension~$2$. The $2$-thickness cofiltration associated with $X^\bullet$ is:
\[
X^{0,2}=X^0\supseteq X^{1,2}=X^2\supseteq X^{2,2}=X^2\,.
\]
That is, we have the following diagram:
\[
\begin{tikzcd}[column sep=3.5em, row sep=3.5em]
X^{0,2}=X^0 & X^{1,2}=X^2\arrow[d, hook'] \arrow[l, hook'] & X^{2,2}=X^2 \arrow[l, equals] \\
X^0 \arrow[u, equals] & X^1 \arrow[l, hook']  & X^2 \arrow[l, hook'] \arrow[u, equals]
\end{tikzcd}
\]
Restricting the sheaf $\cF$ to each subcomplex in the preceding diagram and passing to cohomology yields the following diagrams:
\[
\begin{tikzcd}[column sep=3.5em, row sep=3.2em]
H^0(X^0;\mathcal F) \arrow[r] &
H^0(X^2;\mathcal F_{|_{X^2}}) \arrow[r] &
H^0(X^2;\mathcal F_{|_{X^2}}) \\
H^0(X^0;\mathcal F) \arrow[u, "\varphi^{0,0,2}"] \arrow[r] &
H^0(X^1;\mathcal F_{|_{X^1}}) \arrow[u, "\varphi^{0,1,2}"] \arrow[r] &
H^0(X^2;\mathcal F_{|_{X^2}}) \arrow[u, "\varphi^{0,2,2}"]
\end{tikzcd}
\]
\[
\begin{tikzcd}[column sep=3.5em, row sep=3.2em]
H^1(X^0;\mathcal F) \arrow[r] &
H^1(X^2;\mathcal F_{|_{X^2}}) \arrow[r] &
H^1(X^2;\mathcal F_{|_{X^2}}) \\
H^1(X^0;\mathcal F) \arrow[u, "\varphi^{1,0,2}"] \arrow[r] &
H^1(X^1;\mathcal F_{|_{X^1}}) \arrow[u, "\varphi^{1,1,2}"] \arrow[r] &
H^1(X^2;\mathcal F_{|_{X^2}}) \arrow[u, "\varphi^{1,2,2}"]
\end{tikzcd}
\]
Proceeding as in Section~\ref{ss:CohHaz}, we compute the corresponding cohomology spaces and their induced linear maps, obtaining the diagrams below (degree $0$ on the left, degree $1$ on the right): 
\begin{equation}\label{e:resgro}
\begin{tikzcd}[column sep=3.8em, row sep=3.2em]
0 \arrow[r] &
\mathbb R \arrow[r] &
\mathbb R
&&
0 \arrow[r] &
0 \arrow[r] &
0
\\
0 \arrow[u, "0"] \arrow[r] &
0 \arrow[u, "0"] \arrow[r] &
\mathbb R \arrow[u, "\mathrm{Id}"]
&&
0 \arrow[u, "0"] \arrow[r] &
0 \arrow[u, "0"] \arrow[r] &
0 \arrow[u, "0"]
\end{tikzcd}
\end{equation}
Since the diagram associated with cohomology in degree $1$ is zero, we focus on what remains in the diagram associated with degree $0$. The kernel, image, and cokernel persistence modules of the diagram on the left are:
\[
\ker \varphi^{0,\bullet,2} \colon 0\longrightarrow 0\longrightarrow 0 \qquad \quad \img \varphi^{0,\bullet,2} \colon 0\longrightarrow 0\longrightarrow \mathbb R \qquad \quad \coker \varphi^{0,\bullet,2} \colon 0\longrightarrow \R \longrightarrow 0
\]

The sheaf structure exhibits local compatibility within each triangle, as the restriction of global sections to any individual triangle in $X^0$ is nonzero. Nevertheless, a global incompatibility arises when going around the central hole. To interpret this, consider the following cycle around the hole:
\[
\gamma\colon v_0\to v_1\to v_2\to v_3\to v_4\to v_5 \to v_0\,.
\]
For each $0\leq i,j\leq 5$, write $T_{i,j}=\cF_{v_j\face e_{ij}}^{-1}\cF_{v_i\face e_{ij}}$. Then, along the cycle $\gamma$,
\[
T_{\gamma}=T_{5,0}\,T_{4,5}\,T_{3,4}\,T_{2,3}\,T_{1,2}\,T_{0,1}=1\cdot 1\cdot 1\cdot 1\cdot 1\cdot (-1)=-1\,.
\]
This negative sign is invariant along any cycle surrounding the central hole. This is precisely the global obstruction that prevents the existence of global sections of the sheaf on $X^0$ and $X^1$. From all of the above, we make the following interpretation for the cohomology of the sheaf $\cF$:
\begin{enumerate}
\item The type-T persistence of the sheaf $\mathcal F$ for the cofiltration $X^\bullet$ tracks the evolution of the data defined by the sheaf over the attack cofiltration. This is given by the lower rows of the diagrams~\eqref{e:resgro}, and the barcode in degree $0$ has a single interval, $[2,3)$. In this example, this tells us that:

\begin{itemize}[leftmargin=2em]
\item In \(X^0\) and \(X^1\), there are no nontrivial cohomology classes in degree \(0\), since the cycle $\gamma$ around the annulus still transports the sign \(-1\), which prevents a global section from closing coherently.
\item Nontrivial global sections emerge at attack step $2$, in \(X^2\), when the cycle breaks and the global obstruction disappears.
\item Consequently, in this case the persistent cohomology of the sheaf detects when the global obstruction in the filtration $X^\bullet$ vanishes, but does not distinguish whether that obstruction is supported in a thick or thin part of the system.
\end{itemize}

\item The persistence of the \(2\)-thickness (type-T persistence of the sheaf $\mathcal F$ applied to the cofiltration of $2$-coskeleta) is given by the upper rows of the diagrams~\eqref{e:resgro}. The barcode of the persistence module associated with cohomology in degree $0$ consists of the interval $[1,3)$. This persistence captures finer geometric information than the previous one:
\begin{itemize}
\item Already at the first attack there is a thick \(2\)-dimensional part of the complex, $X^{1,2}=X^{2}$, which does not contain the global obstruction, since we have removed the edge $e_{05}$ and all remaining restriction morphisms are the identity.
\item It is precisely because of the preceding fact that a global section appears one stage earlier. This early appearance confirms that the initial obstruction was neither thick nor robust, as it is not supported on $X^{1,2}$.
\end{itemize}
\item The barcode of the cokernel persistence module, $\coker \varphi^{0,\bullet,2}$, consists of the interval $[1,2)$. At each stage, this keeps track of the global sections over the $2$-coskeleton that do not extend to global sections of the total complex. It has an added predictive capacity. In this example:
\begin{itemize}
    \item The thick part, \(X^{1,2}=X^2\), has a global section since \(H^0(X^{2};\mathcal F_{|_{X^2}})=\mathbb R\), yet this class cannot be extended to $X^1$. Thus, the obstruction to the existence of a global section is no longer in the thick part of the system, but in the difference $X^1\smallsetminus X^{1,2}$, contributing the sign change in the transport $T_{\gamma}$ along any cycle surrounding the central hole.
    \item The cokernel module not only detects a discrepancy between the cohomology of the thick part $X^{\bullet,2}$ and that of the total complex $X^\bullet$, but also locates the stage at which the obstruction moves from being supported in the thick part to being supported in a thin region (the edge $e_{05}$ of $X^1$).
\end{itemize}
\end{enumerate}

\section*{Acknowledgements} 
The authors would like to thank Fernando Sancho de Salas for his insightful and helpful comments. This work is supported by Spanish National Grant PID2021-128665NB-I00 funded by MCIN/AEI/10.13039/501100011033 and, as appropriate, by ``ERDF A way of making Europe''; and also by project STAMGAD 18.J445/463AC03 by Consejer\'ia de Educaci\'on (GIR, Junta de Castilla y Le\'on, Spain).  Pablo Hern\'andez-Garc\'ia also acknowledges financial support from the USAL 2022 call for predoctoral contracts, co-financed by Banco Santander.

\bibliographystyle{abbrv}
\bibliography{biblio}

\end{document}